\documentclass[10pt]{amsart}
\usepackage{amsthm,hyperref}
\usepackage{amsthm}
\usepackage{amssymb}
\usepackage[final]{showkeys}
\usepackage{microtype}
\usepackage{color}
\usepackage{graphicx}
\usepackage{adjustbox}
\usepackage[hmargin=3cm,vmargin={5cm,4cm}]{geometry}

\begin{document}
	
	\newtheorem{theorem}{Theorem}[section]
	\newtheorem{example}[theorem]{Example}
	\newtheorem{definition}[theorem]{Definition}
	\newtheorem{remark}[theorem]{Remark}
	\newtheorem{proposition}[theorem]{Proposition}
	\newtheorem{lemma}[theorem]{Lemma}
	\newtheorem{corollary}[theorem]{Corollary}
	\theoremstyle{definition}
	\newtheorem{conjecture}[theorem]{Conjecture}
	\newtheorem{problem}[theorem]{Problem}
	\newtheorem{question}[theorem]{Question}
	
	\numberwithin{equation}{section}
	
	\renewcommand{\subjclassname}{%
		\textup{2020} Mathematics Subject Classification}
	
	\allowdisplaybreaks
	
	\title[]{The nested derivative criterion for determining elementary solutions to certain non-linear PDEs  in one-dimensional space-time.}	
	\author{Francesco Maltese}
	\address{Liceo Scientifico Amedeo Avogadro}
	\email{ennio.vincent@gmail.com}
	
	\keywords{ Eigenfunction, Invariant spaces, Polynomials, Algebraic operator, Nested derivation, Nested partition, Elementary functions, Liouville's theorem, Integration by parts. 
	}
	\subjclass[2020]{35A25, 35C05, 35C09, 35G20,47H30}

	\maketitle
	
	\begin{abstract}
		
	In this article, we have studied a particular criterion to establish if a particular class of nonlinear PDEs $\mathcal{O}_t u=\mathcal{O}_x u $ in unidimensional space-time $(x,t)$ admits  elementary solutions respect to the spatial coordinate $x$, through the nested derivative method. Where a nested derivative is a generalization of the usual partial derivative of a order higher than one and, in terms of differentials operators, is a generalization of the Laguerre operator. This method, integrated with  Liouville's theorem and Galois theory and method by integrate by parts, establishes if solution of nested derivative equations admits elementary solutions respect with $x$. These solutions could help to study better evolutions of possible physical systems in space-time.

	\bigskip

	\quad

	\end{abstract}

	\section{Introduction}\label{Se:Section_1}
	
	In this section we will use certain criteria to establish whether there are the elementary solutions and then determine them for non-linear PDE, which could represent a hypothetical physical model in unidimensional space-time $(x,t)$ of the type
	
	$$\mathcal{O}_t u=\mathcal{O}_x u .$$

Where  $\mathcal{O}_x$, $\mathcal{O}_t$ are differentials operators
 with respect to the spatial variable and the temporal variable, respectively, and $u$ is a function in unidimensional space-time $(x,t)$. In particular, the operators  $\mathcal{O}_x$ will be polynomial with respect to $u$ and its partial derivatives with respect to $x$ i.e 
	 $	\mathcal{O}_x u \in \mathcal{C}^{\infty}(U;\mathbb{C})\big[\partial^{(k)}_{x}u \big]_{k=0}^{l}$.
	 \quad
	 where $U$ is an opportune open set on space-time $(x,t)$ and we recall that 
	
	$$\big[\partial^{(k)}_{x}u \big]_{k=0}^{l}=\big[\partial^{(0)}_{x}u=u,\partial^{(1)}_{x}u,...,\partial^{(k)}_{x}u,...,\partial^{(l)}_{x}u \big ]$$ and $\mathcal{C}^{\infty}(U;\mathbb{C})\big[\partial^{(k)}_{x}u \big]_{k=0}^{l}$ can be seen as ring of polynomials in indeterminates $\partial^{(k)}_{x}u$ with coefficients in $\mathcal{C}^{\infty}(U;\mathbb{C})$.
	
	\quad
	
	We will solve these equations in two ways:
	
	$$ 1)\mathcal{O}_t u=0 , \mathcal{O}_x u=0     $$
	$$ 2)\mathcal{O}_x u \in W=<g_1(x),...,g_s(x)>, \hspace{0.1cm} if\hspace{0.1cm} u(x,t) \in W \hspace{0.1cm}\mathcal{O}_t h_i(t)=\lambda(t) h_i(t)\hspace{0.1cm} 1\leq i \leq s,$$
	
	if $u(x,t)= \sum_{i=1}^{s}h_i(t)g_i(x)$.

	\quad

	The second way is called "the method invariant space"(for details of this method, please refer to \cite{gazizov2013construction} and/or \cite[Section 5]{maltese2024non}) with an opportune function $\lambda(t)$ . In this case we will use, as operator $\mathcal{O}_t$, a linear operator. In the case $s=1$ we are considering the solution $u$ as an eigenfunction of operator $\mathcal{O}_x$ with an opportune eigenvalue $\lambda(t)$.
	
	\quad

	 In both cases 1) and 2), we can see  $\mathcal{O}_x u$ as a sum of two operators in this way:
	
	\begin{equation}\label{eq:Equation_1}
		 	\mathcal{O}_x u=\mathcal{\overline{O}}_x u + \lambda(t) u ,
	\end{equation}
	where, if there is an eigenfunction $\bar{u} \in \mathcal{C}^{\infty}(U;\mathbb{C})$ with eigenvalue $\lambda(t)$ for $	\mathcal{O}_x u$, $\mathcal{\overline{O}}_x \bar{u}=0$, in particular in the way 1) we will have $\lambda(t) \equiv 0$. In this case, furthermore, we can also remark that $\mathcal{\overline{O}}_x u \in \big(\partial^{(k)}_{x}u-\partial^{(k)}_{x}\bar{u}\big)_{k=0}^{l} $  an ideal of $\mathcal{C}^{\infty}(U;\mathbb{C})\big[\partial^{(k)}_{x}u \big]_{k=0}^{l}$ generated of operators $\partial^{(k)}_{x}u-\partial^{(k)}_{x}\bar{u}$ and by \eqref{eq:Equation_1} we get that $\mathcal{O}_x u \in \lambda(t) u + \big(\partial^{(k)}_{x}u-\partial^{(k)}_{x}\bar{u}\big)_{k=0}^{l} $ that is $\mathcal{O}_x u$ belongs to the class lateral $\lambda(t) u$ modulo $ \big(\partial^{(k)}_{x}u-\partial^{(k)}_{x}\bar{u}\big)_{k=0}^{l} $ . Then we can assert that
	
	\begin{equation}\label{pr:prop_1}
		\mathcal{O}_x \bar{u}= \lambda \bar{u} \hspace{0.1cm} \iff \hspace{0.1cm}  \mathcal{O}_x u \in \lambda(t) u + \big(\partial^{(k)}_{x}u-\partial^{(k)}_{x}\bar{u}\big)_{k=0}^{l}.
	\end{equation}
	
	\quad

	Therefore, in this article we will focus on the operator $\mathcal{\overline{O}}_x u$ of \eqref{eq:Equation_1} and attempt to solve the EDO
	
\begin{equation}\label{eq:Equation_2}
	\mathcal{\overline{O}}_x u=0,
\end{equation}

	  using the method of nested derivatives(for the definition of nested derivative, see \eqref{eq:Equation_4}) method where possible. In fact, we will find the conditions under which the operator $\mathcal{\overline{O}}_x u$ can be expressed with nested derivatives, and once the solution has been found using this method, we will establish when a solution $\bar{u}$ of \eqref{eq:Equation_2} is elementary.
	  
	  \quad
	  
	  The nested derivative can be viewed as a generalization of the Laguerre operator $\partial_t t \partial_t$(see \cite{ricci2020laguerre}). In most of the examples given in this article, it will be applied with respect to the spatial coordinate. The structure of the nested derivative operator allows us to understand the building blocks of the solution, and from this we can determine whether some solutions are elementary.

	     \section{Notations}\label{se:Section_2}
	     
	     Now, so as not to further weigh down the development of 	$\mathcal{\overline{O}}_x u$, we can view it in this way 
	     
	     \begin{equation}\label{eq:Equation_3}
	     	\mathcal{\overline{O}}_x u=\sum_{I \in \mathcal{F}} A_I (x,t) \mathcal{P}_{l,x}^{I}u,
	     \end{equation} 
	
	where $l>0$ is a fixed positive integer and $\mathcal{F}$ is a finite sequence of points in $\mathbb{N}^{l+1}$ and $\mathcal{P}_{l,x}^{I}$ is a non-linear algebraic differential operator defined in this way
	
	\begin{equation}\label{eq:Equation_4}
	\mathcal{P}_{l,x}^{I}u:=\prod_{j=0}^{l}\big(\partial_{x}^{(j)}u\big)^{i_j},
	\end{equation}
	
	if $I=(i_0, i_1,..., i_j,...,i_l )$.
	\quad

	We will use the following conventions on the multi-index $I \in \mathcal{F}$; if we have, for example, $I,J,K \in \mathcal{F}$ their expressions by components will be named in their indexed small versions, i.e., $I=(i_0, i_1,..., i_j,...,i_l )$, $J=(j_0, j_1,..., j_j,...,j_l )$, $K=(k_0, k_1,..., k_j,...,k_l )$.
	
	\quad
	
	Furthermore, we will suppose that $l$ of \eqref{eq:Equation_3} and \eqref{eq:Equation_4} is the highest degree of derivation of $	\mathcal{\overline{O}}_x u$, and since $	\mathcal{\overline{O}}_x u$ is non-linear, then it will be at least $I \in \mathcal{F}$ such that $i_j >1$ for some $j$.
	
	\section{The nested derivative}\label{se:Section_3}
	
	\begin{definition}\label{df:Definition_1}
	 A nested derivative of order $k>1$ is the following differential operator.
	 
	 \begin{equation}\label{eq:Equation_5}
	 	\partial_{x}f_{k-1}\partial_{x}f_{k-2}\partial_{x}...\partial_{x}f_{l}\partial_{x}...\partial_{x}f_{2}\partial_{x}f_{1}\partial_{x}.
	 \end{equation}
	 	with $f_{l}$ for $ 1\leq l \leq k-1$ functions of class at least $C^{k}$ 
		on a suitable open interval of $\mathbb{R}.$
	\end{definition}

\quad

\begin{remark}\label{os:Observation_1}
We can see the usual derivation of order $k$, $\partial_{x}^{(k)}$	as a special case of the nested derivative for $f_{k-1}=f_{k-2}=....=f_{l}=....=f_{1}=1$	for $k>1$.
\end{remark}

\quad

Furthermore, to compact the notation, we will denote with $D_{f_{l}}:=f_{l}\partial_{x}$. In this way \eqref{eq:Equation_5} becomes

 \begin{equation}\label{eq:Equation_6}
\partial_{x}D_{f_{l},1}^{(k-1)}:=	\partial_{x}D_{f_{k-1}}D_{f_{k-2}}...D_{f_{l}}...D_{f_{2}}D_{f_{1}}.
\end{equation}

If in \eqref{eq:Equation_6} we get that $f_{k-1}=f_{k-2}=....=f_{l}=....=f_{1}=f$, the nested derivative of order $k$ can be denoted, in a natural manner, as $\partial_{x}D_{f}^{(k-1)}$, where $D_{f}^{(k-1)}=\underbrace{D_{f}D_{f}...D_{f}...D_{f}D_{f}}_{(k-1)-times}$ and as the usual derivation, we will denote $D_{f}^{(0)}u(x):=u(x)$.

Now, given a function $f(x)$ of class at least $C^{(k)}$, we want to explicity compute $\partial_{x}D_{f_{l},1}^{(k-1)}f$.
Let's begin with case $k=2$ and $k=3$.

\quad

$$\partial_{x}D_{f_{l},1}^{(1)}f=\partial_{x}f_{1}\partial_{x}f=f_{1}^{'}\partial_{x}f+f_{1}\partial_{x}^{(2)}f=n_{1}^{(2)}f_{1}^{'}\partial_{x}f+n_{0}^{(2)}f_{1}\partial_{x}^{(2)}f,$$

\quad
with $n_{1}^{(2)}=1$, $n_{0}^{(2)}=1$.

\quad

For $k=3$, we get 

$$\partial_{x}D_{f_{l},1}^{(2)}f=\partial_{x}D_{f_{2}}D_{f_{1}}f=\partial_{x}f_{2}\partial_{x}f_{1}\partial_{x}f$$

$$=\partial_{x}f_{2}(n_{1}^{(2)}f_{1}^{'}\partial_{x}f+n_{0}^{(2)}f_{1}\partial_{x}^{(2)}f)$$ $$=\partial_{x}(n_{1}^{(2)}f_{1}^{'}f_{2}\partial_{x}f+n_{0}^{(2)}f_{1}f_{2}\partial_{x}^{(2)}f)$$

\begin{equation}\label{eq:Equation_7}
=n_{1}^{(2)}f_{1}^{(2)}f_{2}\partial_{x}f+n_{1}^{(2)}f_{1}^{'}f_{2}^{'}\partial_{x}f+n_{1}^{(2)}f_{1}^{'}f_{2}\partial_{x}^{(2)}f
\end{equation}

$$+n_{0}^{(2)}f_{1}^{'}f_{2}\partial_{x}^{(2)}f+n_{0}^{(2)}f_{1}f_{2}^{'}\partial_{x}^{(2)}f+n_{0}^{(2)}f_{1}f_{2}\partial_{x}^{(3)}f.$$

 Now we make a partial gathering on \eqref{eq:Equation_7} with respect  $\partial_{x}^{(j)}f$ and obtain
 
 $$( n_{1}^{(2)}f_{1}^{(2)}f_{2}+n_{1}^{(2)}f_{1}^{'}f_{2}^{'})\partial_{x}f+[(n_{1}^{(2)}+n_{0}^{(2)})f_{1}^{'}f_{2}+n_{0}^{(2)}f_{1}f_{2}^{'}]\partial_{x}^{(2)}f+n_{0}^{(2)}f_{1}f_{2}\partial_{x}^{(3)}f=$$
 
 \begin{equation}\label{eq:Equation_8}
 (n_{2,0}^{(3)}f_{1}^{(2)}f_{2}+n_{1,1}^{(3)}f_{1}^{'}f_{2}^{'})\partial_{x}f+(n_{1,0}^{(3)}f_{1}^{'}f_{2}+n_{0,1}^{(3)}f_{1}f_{2}^{'}) \partial_{x}^{(2)}f +n_{0,0}^{(3)}f_{1}f_{2}\partial_{x}^{(3)}f,
\end{equation}

\quad 
with 

\begin{equation}\label{eqs:Equations_1}
	n_{2,0}^{(3)}=n_{1}^{(2)}, \vspace{0.3cm}n_{1,1}^{(3)}=n_{1}^{(2)},\vspace{0.3cm}n_{1,0}^{(3)}=n_{1}^{(2)}+n_{0}^{(2)},\vspace{0.3cm}n_{0,1}^{(3)}=n_{0}^{(2)}, \vspace{0.3cm}n_{0,0}^{(3)}=n_{0}^{(2)}. 
\end{equation}

We can find a general formula to calculate $\partial_{x}D_{f_{l},1}^{(k-1)}f$ for $k>3$	
based on what we determined from k=2 to k=3. We denote with $n_{i_1,...,i_l,...,i_{j-1}}^{(j)}$ the numerical coefficient of the term $ \prod_{l=1}^{j-1}f_{l}^{(i_l)}$ generated from the development of $\partial_{x}D_{f_{l},1}^{(j-1)}f$.

\quad
The subscripts $i_1,...,i_l,...,i_{j-1}$ indicate the order of differentiation of the functions $f_l$; therefore, given the definition of the nested derivative of order $j$, these subscripts are subject to restrictions depending on the order in which they appear in the expansion of the nested derivative, i.e., $i_l \leq j-l$. If we start with the constraints of subscript to determine $\partial_{x}D_{f_{l},1}^{(j-1)}f$, it could happen that some subscripts $i_1,...,i_l,...,i_{j-1}$ are compatible with constraints, as just explained above, but such subscripts don't occur in the calculus of $\partial_{x}D_{f_{l},1}^{(j-1)}f$. 

\quad

First, let us look at this and why it is so, using an example; we will then demonstrate it in general terms. The example involves applying the definition \ref{df:Definition_1} in the case where $k=4$; this is rather laborious but necessary in order to provide a rigorous, unambiguous definition of the terms $n_{i_1,...,i_l,...,i_{j-1}}^{(j)}$.

\quad

For $k=4$ we get 

$$\partial_{x}D_{f_{l},1}^{(3)}=\partial_{x}f_3 [(n_{2,0}^{(3)}f_{1}^{(2)}f_{2}+n_{1,1}^{(3)}f_{1}^{'}f_{2}^{'})\partial_{x}f+(n_{1,0}^{(3)}f_{1}^{'}f_{2}+n_{0,1}^{(3)}f_{1}f_{2}^{'}) \partial_{x}^{(2)}f +n_{0,0}^{(3)}f_{1}f_{2}\partial_{x}^{(3)}f]=$$

$$ = \partial_{x}[(n_{2,0}^{(3)}f_{1}^{(2)}f_{2}f_3 +n_{1,1}^{(3)}f_{1}^{'}f_{2}^{'}f_3)\partial_{x}f+(n_{1,0}^{(3)}f_{1}^{'}f_{2}f_3 +n_{0,1}^{(3)}f_{1}f_{2}^{'}f_3) \partial_{x}^{(2)}f +n_{0,0}^{(3)}f_{1}f_{2}f_3\partial_{x}^{(3)}f ] = $$

$$=n_{1,1}^{(3)}f'_3f'_2f'_1\partial_{x}f+n_{1,1}^{(3)}f_3f''_2 f'_1\partial_x f+ n_{1,1}^{(3)}f_3f'_2 f''_1\partial_x f +n_{1,1}^{(3)}f_3f'_2 f'_1\partial_{x}^{(2)}  f+n_{2,0}^{(3)}f'_3f_2f''_1\partial_{x}f +$$

$$n_{2,0}^{(3)}f_3f'_2f''_1\partial_{x}f+n_{2,0}^{(3)}f_3f_2f'''_1\partial_{x}f+n_{2,0}^{(3)}f_3f_2f''_1\partial_{x}^{(2)}f+n_{1,0}^{(3)}f_{3}^{'}f_{2}f'_1\partial_{x}^{(2)}f+ $$

$$n_{1,0}^{(3)}f_{3}f'_{2}f'_1\partial_{x}^{(2)}f+ n_{1,0}^{(3)}f_{3}f_{2}f''_1\partial_{x}^{(2)}f+n_{1,0}^{(3)}f_{3}f_{2}f'_1\partial_{x}^{(3)}f + n_{0,1}^{(3)}f'_{3}f'_{2}f_1\partial_{x}^{(2)}f  + n_{0,1}^{(3)}f_{3}f''_{2}f_1\partial_{x}^{(2)}f+$$
$$n_{0,1}^{(3)}f_{3}f'_{2}f'_1\partial_{x}^{(2)}f+n_{0,1}^{(3)}f_{3}f'_{2}f_1\partial_{x}^{(3)}f+n_{0,0}^{(3)}f'_{3}f_{2}f_1\partial_{x}^{(3)}f +n_{0,0}^{(3)}f_{3}f'_{2}f_1\partial_{x}^{(3)}f +n_{0,0}^{(3)}f_{3}f_{2}f'_1\partial_{x}^{(3)}f+$$

$$n_{0,0}^{(3)}f_{3}f_{2}f_1\partial_{x}^{(4)}f.$$

$$\partial_{x}D_{f_{l},1}^{(3)}=[ n_{1,1}^{(3)}f'_3f'_2f'_1+n_{1,1}^{(3)}f_3f''_2 f'_1\partial_x f+ (n_{1,1}^{(3)}+ n_{2,0}^{(3)})f_3f'_2 f''_1 +n_{2,0}^{(3)} f'_3f_2f''_1 +n_{2,0}^{(3)}f_3f_2f'''_1  ]\partial_{x}f+$$
$$[(n_{1,1}^{(3)}+n_{1,0}^{(3)}+n_{0,1}^{(3)})f_3 f'_2f'_1+(n_{2,0}^{(3)}+n_{1,0}^{(3)})f_3 f_2 f''_1+ n_{1,0}^{(3)}f'_3 f_2 f'_1+n_{0,1}^{(3)}f'_3 f'_2 f_1+n_{0,1}^{(3)}f_3 f''_2 f_1 ]\partial_{x}^{(2)}f+ $$

$$[(n_{1,0}^{(3)}+n_{0,0}^{(3)})f_3 f_2 f'_1+ (n_{0,1}^{(3)}+n_{0,0}^{(3)})f_3 f'_2 f_1+n_{0,0}^{(3)}f'_3f_2f_1]\partial_{x}^{(3)}f+n_{0,0}^{(3)}f_3 f_2 f_1 \partial_{x}^{(4)}f. $$

\quad

By \eqref{eqs:Equations_1}, we can set up and deduce the following equalities

\begin{equation}\label{eqs:Equations_3}
n_{1,1,1}^{(4)}=n_{1,1}^{(3)}=1, \hspace{0.1cm}	n_{1,2,0}^{(4)}=n_{1,1}^{(3)}=1,\hspace{0.1cm}n_{2,1,0}^{(4)}=n_{2,0}^{(3)}+n_{1,1}^{(3)}=2,\hspace{0.1cm}n_{2,0,1}^{(4)}=n_{2,0}^{(3)}=1,\hspace{0.1cm}n_{3,0,0}^{(4)}=n_{2,0}^{(3)}=1.
\end{equation}
$$n_{1,1,0}^{(4)}=n_{1,1}^{(3)}+n_{1,0}^{(3)}+n_{0,1}^{(3)}=4, n_{2,0,0}^{(4)}=n_{2,0}^{(3)}+n_{1,0}^{(3)}=3,\hspace{0.1cm}n_{1,0,1}^{(4)}=n_{1,0}^{(3)}=2,\hspace{0.1cm} n_{0,1,1}^{(4)}= n_{0,1}^{(3)}=1. $$
$$n_{0,2,0}^{(4)}= n_{0,1}^{(3)}=1, \hspace{0.1cm}n_{1,0,0}^{(4)}=n_{1,0}^{(3)}+n_{0,0}^{(3)}=3,\hspace{0.1cm}n_{0,1,0}^{(4)}=n_{0,1}^{(3)}+n_{0,0}^{(3)}=2,\hspace{0.1cm} n_{0,0,1}^{(4)}=n_{0,0}^{(3)}=1, \hspace{0.1cm}n_{0,0,0}^{(4)}=n_{0,0}^{(3)}=1.$$

\quad
The equalities \eqref{eqs:Equations_3} complete the calculation of $\partial_{x}D_{f_{l},1}^{(3)}$.
\begin{equation}\label{eq:Equation_19}
	\partial_{x}D_{f_{l},1}^{(3)}=\big( n_{1,1,1}^{(4)}f'_3f'_2f'_1+ n_{1,2,0}^{(4)}f_3f''_2f'_1+n_{2,1,0}^{(4)}f_3f'_2f''_1+n_{2,0,1}^{(4)}f'_3f_2f''_1+n_{3,0,0}^{(4)}f_3f_2f'''_1 \big)\partial_x f+
\end{equation}	$$\big(n_{1,1,0}^{(4)}f_3f'_2f'_1+n_{2,0,0}^{(4)}f_3f_2f''_1 + n_{1,0,1}^{(4)}f'_3f_2f'_1+n_{0,1,1}^{(4)}f'_3f'_2f_1+ n_{0,2,0}^{(4)}f_3f''_2f_1 \big)\partial_{x}^{(2)}f+$$
$$\big(n_{1,0,0}^{(4)}f_3f_2f'_1+ n_{0,1,0}^{(4)}f_3f'_2f_1+n_{0,0,1}^{(4)}f'_3f_2f_1 \big)\partial_{x}^{(3)}f+n_{0,0,0}^{(4)}f_3f_2f_1\partial_{x}^{(4)}f.$$

\quad

From \eqref{eqs:Equations_3}, we can draw the following conclusion: the subscripts $i_1,i_2,i_3$ which describe the degrees of derivation of $f_3^{(i_3)}f_2^{(i_2)}f_1^{(i_1)}$ where for the case $k=4$ we have $i_3\leq 1, \hspace{0.1cm} i_2\leq 2,\hspace{0.1cm} i_1 \leq 3$. So the constraints are met, but despite that in \eqref{eqs:Equations_3} one can see that only one $i_j$ in the subscripts $i_1,i_2,i_3$ can reach the maximum value of its constraints in $n_{i_1,i_2,i_3}^{(4)}$. This is due to the fact that in order to calculate $	\partial_{x}D_{f_{l},1}^{(3)}$, it multiplies by $f_3$ and then it derives the terms $f_3f_2^{(\tilde{i}_2)}f_3^{(\tilde{i}_3)}$ by a single factor $f_j^{(\tilde{i}_j)}$.
Therefore, to generalize this fact, let us define the following sets

$$\mathcal{I}max\big((i_l)_{l=1}^{j-1}\big)=\mathcal{I}max(i_1,...,i_l,...,i_{j-1}):=\{ 1\leq l \leq j-1 | i_l=j-l \}.$$

\quad
So, to summarize what has just been said, in \eqref{eq:Equation_19} it's impossible  for the coefficients $n_{i_1,i_2,i_3}^{(4)}$ to get $|\mathcal{I}max\big((i_l)_{l=1}^{3}\big)|>1$.

 Now we have the tools to prove that this in general and the following

\begin{proposition}\label{pr:Proposition_1}
	For $k>2$
	\begin{equation}\label{eq:Equation_9}
		\partial_{x}D_{f_{l},1}^{(k-1)}f=\sum_{j=1}^{k}\bigg(\sum_{\substack
			{\sum_{l=1}^{k-1}i_{l}=k-j,
		       0\leq i_l \leq k-l }} n_{i_1,...,i_l,...,i_{k-1}}^{(k)}\prod_{l=1}^{k-1}f_{l}^{(i_l)}\bigg)\partial_{x}^{(j)}f.
	\end{equation}
\quad
Where the coefficients $n_{i_1,...,i_l,...,i_{k-1}}^{(k)}$ enjoy the following recursive property

\begin{equation}\label{eqs:Equations_2}
n_{i_1,...,i_l,...,i_{k-1}}^{(k)}=\begin{cases} 0\hspace{0.1cm} If\hspace{0.1cm} |\mathcal{I}max\big((i_l)_{l=1}^{k-1}\big)|>1 \\
n_{i_1,...,i_l,...,i_{k-2}}^{(k-1)}\hspace{0.1cm}If\hspace{0.1cm} |\mathcal{I}max\big((i_l)_{l=1}^{k-1}\big)|=1\hspace{0.1cm} and \hspace{0.1cm} \mathcal{I}max\big((i_l)_{l=1}^{k-1}\big)=\{ k-1\} \\
n_{i_1,...,i_l-1,...,i_{k-2}}^{(k-1)}\hspace{0.1cm}If\hspace{0.1cm}|\mathcal{I}max\big((i_l)_{l=1}^{k-1}\big)|=1\hspace{0.1cm} and \hspace{0.1cm}\mathcal{I}max\big((i_l)_{l=1}^{k-1}\big)=\{ l\}  \hspace{0.1cm}with\hspace{0.1cm} l \neq k-1\\
n_{i_1,...,i_l,...,i_{k-2}}^{(k-1)}+\sum_{l=1, i_l\neq 0}^{k-2}n_{i_1,...,i_l-1,...,i_{k-2}}^{(k-1)}\hspace{0.1cm} Otherwise \hspace{0.1cm} if \hspace{0.1cm} |\mathcal{I}max\big((i_l)_{l=1}^{k-1}\big)|=0,
\end{cases}
\end{equation}
such that $ 0\leq i_l \leq k-l $ and $\sum_{l=1}^{k-1}i_{l}=k-j $ with $n_{1}^{(2)}=1$, $n_{0}^{(2)}=1$.

\end{proposition}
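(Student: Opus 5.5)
We argue by induction on $k$, with the explicitly computed case $k=3$ in \eqref{eq:Equation_8}–\eqref{eqs:Equations_1} as base (the case $k=4$ in \eqref{eqs:Equations_3} serves as a consistency check). Assume \eqref{eq:Equation_9} holds for $k-1$. By definition, $\partial_{x}D_{f_{l},1}^{(k-1)}f=\partial_x\big(f_{k-1}\,\partial_{x}D_{f_{l},1}^{(k-2)}f\big)$. Substituting the inductive expansion turns the inner factor into a sum of terms $n^{(k-1)}_{i_1,\dots,i_{k-2}}\prod_{l=1}^{k-2}f_l^{(i_l)}\,\partial_x^{(j)}f$ with $\sum_l i_l=k-1-j$, $0\le i_l\le k-1-l$. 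We multiply each term by $f_{k-1}=f_{k-1}^{(0)}$ and apply the Leibniz rule to the product $f_{k-1}\prod_{l=1}^{k-2} f_l^{(i_l)}\,\partial_x^{(j)}f$. Exactly one factor is differentiated in each resulting term, so there are three kinds of terms:
\begin{itemize}
\item[(a)] $f_{k-1}$ is differentiated: the index $i_{k-1}$ becomes $1$ and $j$ is unchanged;
\item[(b)] some $f_l$ with $l\le k-2$ is differentiated: $i_l$ increases by $1$ and $j$ is unchanged;
\item[(c)] $\partial_x^{(j)}f$ is differentiated: $j$ becomes $j+1$ and all $i_l$ are unchanged, with $i_{k-1}=0$.
\end{itemize}
In every case the total $\sum_{l=1}^{k-1} i_l+j$ rises from $k-1$ to $k$, which gives the constraint $\sum_l i_l=k-j$. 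Each $i_l$ rises by at most one from a value $\le k-1-l$, so $i_l\le k-l$. The range $1\le j\le k$ is also clear. This establishes the shape \eqref{eq:Equation_9}.

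For the recursion \eqref{eqs:Equations_2}, we fix a target multi-index $(i_1,\dots,i_{k-1})$ and collect all parent terms that produce it, reading cases (a)–(c) backwards. Since each parent index satisfies $i_l\le k-1-l$, a target coordinate with $i_l=k-l$ (i.e.\ $l\in\mathcal{I}max$) must be exactly the coordinate that was raised in this step. Only one coordinate is raised per term, which immediately gives $n^{(k)}=0$ when $|\mathcal{I}max|>1$.

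If $\mathcal{I}max=\{k-1\}$, then $i_{k-1}=1$ and only case (a) contributes, from parent $(i_1,\dots,i_{k-2})$. If $\mathcal{I}max=\{l\}$ with $l\ne k-1$, then only case (b) on coordinate $l$ contributes, from parent $(\dots,i_l-1,\dots)$. If $\mathcal{I}max=\emptyset$, then $i_{k-1}=0$. The contributions then come from case (c), with parent $(i_1,\dots,i_{k-2})$ at derivative order $j-1$, and from case (b) on each $l\le k-2$ with $i_l\neq0$, with parent $(\dots,i_l-1,\dots)$. These are precisely the four lines of \eqref{eqs:Equations_2}.

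The main obstacle is bookkeeping rather than ideas. The coefficient in the first two cases must be read as the parent coefficient at the appropriate order of $\partial_x f$, and in the last case the term $n^{(k-1)}_{i_1,\dots,i_{k-2}}$ refers to order $j-1$ while the others refer to order $j$. The notation suppresses $j$, but this is harmless because $j$ is determined by $\sum_l i_l$. We must also check two further points:
\begin{itemize}
\item parents with a coordinate exceeding its bound, or with two maximal coordinates, have coefficient $0$ by the inductive hypothesis, so the sums may be written over all formally admissible indices;
\item the case $|\mathcal{I}max|=0$ with $i_{k-1}=1$ (possible only when $k-1-(k-1)=0$, i.e.\ never, since $i_{k-1}\le1$ and $i_{k-1}=1$ is maximal) does not arise, so the case split is exhaustive.
\end{itemize}
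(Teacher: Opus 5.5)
Your proposal is correct and takes essentially the same route as the paper. Both argue by induction on $k$, expand $\partial_x\big(f_{k-1}\,\partial_{x}D_{f_{l},1}^{(k-2)}f\big)$ by the Leibniz rule into the three kinds of terms (derivative on $f_{k-1}$, on some $f_l^{(i_l)}$, or on $\partial_x^{(j)}f$), and match the collected coefficients case by case according to $\mathcal{I}max$; your remark that every maximal coordinate must be the one raised in the last step justifies the vanishing case more cleanly than the paper does.

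The differences are minor. You take $k=3$ as the base, which covers $k>2$ directly, where the paper uses the computed case $k=4$. Also, for $j=1$ the term $n^{(k-1)}_{i_1,\dots,i_{k-2}}$ in the last line of the recursion has to be taken as $0$, since it would be the coefficient of a nonexistent $\partial_x^{(0)}f$ term; neither your first bullet nor the paper states this explicitly.
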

	\begin{proof}
We prove \eqref{eq:Equation_9}	and \eqref{eqs:Equations_2} by induction on $k$. We choose, as the base of induction $k=4$, the equations \eqref{eq:Equation_19}, as we discussed earlier, and \eqref{eqs:Equations_3} proves \ref{pr:Proposition_1}. Let's move on to the inductive hypothesis .
We know that
$$	\partial_{x}D_{f_{l},1}^{(k-2)}f=\sum_{j=1}^{k-1}\bigg(\sum_{\substack
	{\sum_{l=1}^{k-2}i_{l}=k-1-j,
		0\leq i_l \leq k-1-l }} n_{i_1,...,i_l,...,i_{k-2}}^{(k-1)}\prod_{l=1}^{k-2}f_{l}^{(i_l)}\bigg)\partial_{x}^{(j)}f   .$$
	\quad
	So $$	\partial_{x}D_{f_{l},1}^{(k-1)}f= \partial_{x}f_{k-1}\partial_{x}D_{f_{l},1}^{(k-2)}f$$
	\quad
	and applying the derivation of the product, we get
	
	$$ \partial_{x}D_{f_{l},1}^{(k-1)}f=\partial_{x}f_{k-1}\bigg[\sum_{j=1}^{k-1}\bigg(\sum_{\substack
		{\sum_{l=1}^{k-2}i_{l}=k-1-j,
			0\leq i_l \leq k-1-l }} n_{i_1,...,i_l,...,i_{k-2}}^{(k-1)}\prod_{l=1}^{k-2}f_{l}^{(i_l)}\bigg)\partial_{x}^{(j)}f    \bigg]=$$
		
		$$ = f_{k-1}^{(1)}\sum_{j=1}^{k-1}\bigg(\sum_{\substack
			{\sum_{l=1}^{k-2}i_{l}=k-1-j,
				0\leq i_l \leq k-1-l }} n_{i_1,...,i_l,...,i_{k-2}}^{(k-1)}\prod_{l=1}^{k-2}f_{l}^{(i_l)}\bigg)\partial_{x}^{(j)}f$$    $$+f_{k-1}\sum_{j=1}^{k-1}\bigg(\sum_{\substack
			{\sum_{l=1}^{k-2}i_{l}=k-1-j,
			0\leq i_l \leq k-1-l }} n_{i_1,...,i_l,...,i_{k-2}}^{(k-1)}\partial_{x}\prod_{l=1}^{k-2}f_{l}^{(i_l)}\bigg)\partial_{x}^{(j)}f$$
		
		$$ +  f_{k-1}\sum_{j=1}^{k-1}\bigg(\sum_{\substack
			{\sum_{l=1}^{k-2}i_{l}=k-1-j,
				0\leq i_l \leq k-1-l }} n_{i_1,...,i_l,...,i_{k-2}}^{(k-1)}\prod_{l=1}^{k-2}f_{l}^{(i_l)}\bigg)\partial_{x}^{(j+1)}f  =         $$

			\begin{equation}\label{eq:Equation_10}
			=\bigg(\sum_{\substack
				{\sum_{l=1}^{k-2}i_{l}=k-2,
					0\leq i_l \leq k-1-l }} n_{i_1,...,i_l,...,i_{k-2}}^{(k-1)}f_{k-1}^{(1)}\prod_{l=1}^{k-2}f_{l}^{(i_l)}\bigg)\partial_{x}^{(1)}f
					\end{equation}
				
				$$+\bigg(\sum_{\substack{ \sum_{l=1}^{k-2}i_{l}=k-2,
				0\leq i_l \leq k-1-l }} n_{i_1,...,i_l,...,i_{k-2}}^{(k-1)}f_{k-1}\partial_{x}\prod_{l=1}^{k-2}f_{l}^{(i_l)}\bigg)\partial_{x}^{(1)}f    $$
			
			  $$+ \sum_{j=2}^{k-1}\bigg(\sum_{\substack
			  	{\sum_{l=1}^{k-2}i_{l}=k-1-j,
			  		0\leq i_l \leq k-1-l }} n_{i_1,...,i_l,...,i_{k-2}}^{(k-1)}f_{k-1}^{(1)}\prod_{l=1}^{k-2}f_{l}^{(i_l)}\bigg)\partial_{x}^{(j)}f$$
		  		$$+\sum_{j=2}^{k-1}\bigg(\sum_{\substack
		  		{\sum_{l=1}^{k-2}i_{l}=k-1-j,
		  		0\leq i_l \leq k-1-l}} n_{i_1,...,i_l,...,i_{k-2}}^{(k-1)}f_{k-1}\partial_{x}\prod_{l=1}^{k-2}f_{l}^{(i_l)}\bigg)\partial_{x}^{(j)}f      $$
	  		$$ +\sum_{j=2}^{k-1}\bigg(\sum_{\substack
	  			{\sum_{l=1}^{k-2}i_{l}=(k-1-j)+1,
	  				0\leq i_l \leq k-1-l }} n_{i_1,...,i_l,...,i_{k-2}}^{(k-1)}f_{k-1}\prod_{l=1}^{k-2}f_{l}^{(i_l)}\bigg)\partial_{x}^{(j)}f $$
  				$$+n_{\underbrace{0,...,0}_{(k-2)-times}}^{(k-1)}\prod_{l=1}^{k-1}f_{l}^{(i_l)}\partial_{x}^{(k)}f. $$
  				
  				Let's consider the first two members of \eqref{eq:Equation_10} and calculate $f_{k-1}\partial_x\prod_{l=1}^{k-2}f_{l}^{(i_l)}$ which, with  $f_{k-1}^{(1)}\prod_{l=1}^{k-2}f_{l}^{(i_l)}$, we can generalise in this way $f_{l'}^{(i_{l'}+1)}\prod_{l=1, l\neq l'}^{k-2}f_{l}^{(i_l)}$. Now we can factor out the first two terms $\partial_{x}^{(1)}f$, and furthermore we can gather the common factors $f_{k-1}\partial_x\prod_{l=1}^{k-2}f_{l}^{(i_l)}$, i.e
                
                \begin{equation}\label{eq:Equation_20}
                	\sum_{\sum_{l=1}^{k-2}\tilde{i_l}=k-2, 0\leq \tilde{i_l}\leq k-1-l }\sum_{\hspace{0.5cm}l'=1, (\tilde{i}_1 ,...,\tilde{i}_{l'}+1  ,...,\tilde{i}_l ,...,\tilde{i}_{k-1})=(i_1,...,i_l,...,i_{k-1})}^{k-1}n_{\tilde{i}_1,...,\tilde{i}_l ,...,\tilde{i}_{k-2}}^{(k-1)}f_{l'}^{(\tilde{i}_{l'}+1)}\prod_{l=1, l\neq l'}^{k-2}f_{l}^{(\tilde{i}_l)}=
                \end{equation}
	$$  = \Bigg(	\sum_{\sum_{l=1}^{k-2}\tilde{i_l}=k-2, 0\leq \tilde{i_l}\leq k-1-l } \sum_{\hspace{0.5cm}l'=1,(\tilde{i}_1 ,...,\tilde{i}_{l'}+1  ,...,\tilde{i}_l ,...,\tilde{i}_{k-1})=(i_1,...,i_l,...,i_{k-1})}^{k-1}n_{\tilde{i}_1,...,\tilde{i}_l ,...,\tilde{i}_{k-2}}^{(k-1)} \Bigg)\prod_{l=1}^{k-1}f_{l}^{(i_l)} .                            $$
	
   Posing $\tilde{i}_{k-1}=0$, where $(i_1,...,i_l,...,i_{k-1})$ is such that $\sum_{l=1}^{k-1}i_l=\sum_{l=1}^{k-2}\tilde{i_l}+1=k-2 +1 =k-1$ and $ 0\leq i_l \leq k-1-l < k-l$, and, since $i_{l'}=\tilde{i}_{l'}+1 $, $0 \leq i_{l'} \leq k-1-l'+1=k-l'$, in particular if $l'=k-1$ we have that $0 \le i_{k-1} \le 1$. 
	\quad
	
	By assuming
	
	\begin{equation}\label{eq:Equation_21}
	n_{i_1,...,i_l,...,i_{k-1}}^{(k)}=	\sum_{\sum_{l=1}^{k-2}\tilde{i_l}=k-2, 0\leq \tilde{i_l}\leq k-1-l } \sum_{\hspace{0.5cm}l'=1,(\tilde{i}_1 ,...,\tilde{i}_{l'}+1  ,...,\tilde{i}_l ,...,\tilde{i}_{k-1})=(i_1,...,i_l,...,i_{k-1})}^{k-1}n_{\tilde{i}_1,...,\tilde{i}_l ,...,\tilde{i}_{k-2}}^{(k-1)}.
	\end{equation}
	From \eqref{eq:Equation_21} it can see that $	n_{i_1,...,i_l,...,i_{k-1}}^{(k)}$ satisfies the proprerties of \eqref{eqs:Equations_2}. In fact, if $i_{k-1}=1$, we have a unique 
	$n_{\tilde{i}_1 ,...,\tilde{i}_l ,...,\tilde{i}_{k-1}}$ such that $(\tilde{i}_1 ,...,\tilde{i}_{l'}+1  ,...,\tilde{i}_l ,...,\tilde{i}_{k-1})=(i_1,...,i_l,...,i_{k-1})$. We can still recall, this from the fact that when moving from the $\partial_{x}D_{f_{l},1}^{(k-2)}f$ to $\partial_{x}D_{f_{l},1}^{(k-1)}f$ we have to multiply for an opportune $f_l$, in this case $l=k-1$ and derive a single function, once again, in this case $f_{k-1}$.
	For the case $i_{k-1}$ if $\mathcal{I}max\big((i_l)_{l=1}^{k-1}\big)=\{ l\}$ with $l \neq k-1$, the reasoning and the conclusion here are similar to those used previously; in this line of thinking, the index $i_l$ must be substituted for $i_{k-1}$, and here too, the equation \eqref{eqs:Equations_2} is satisfied. If $|\mathcal{I}max\big((i_l)_{l=1}^{k-1}\big)|=0$ we have more than one solution  $n_{\tilde{i}_1 ,...,\tilde{i}_l ,...,\tilde{i}_{k-1}}$ such that $(\tilde{i}_1 ,...,\tilde{i}_{l'}+1  ,...,\tilde{i}_l ,...,\tilde{i}_{k-1})=(i_1,...,i_l,...,i_{k-1})$ and we have proven \eqref{eqs:Equations_2} for $\partial_{x}^{(1)}f$.
	
	\quad
	We consider, for an fixed $j \neq 1$, we can collect the terms $f_{k-1}\prod_{l=1}^{k-2}f_{l}^{(i_l)}$ of $\partial_{x}^{(j)}f$ and from  \eqref{eq:Equation_10} we get, always assuming $\tilde{i}_{k-1}=0$, 
	
	\begin{equation}\label{eq:Equation_23}
		\Bigg(	\sum_{\sum_{l=1}^{k-2}\tilde{i_l}=k-1-j, 0\leq \tilde{i_l}\leq k-1-l } \sum_{\hspace{0.5cm}l'=1,(\tilde{i}_1 ,...,\tilde{i}_{l'}+1  ,...,\tilde{i}_l ,...,\tilde{i}_{k-1})=(i_1,...,i_l,...,i_{k-1})}^{k-1}n_{\tilde{i}_1,...,\tilde{i}_l ,...,\tilde{i}_{k-2}}^{(k-1)}+ n_{i_1,...,i_l,...,i_{k-2}}^{(k-1)} \Bigg)f_{k-1}\prod_{l=1}^{k-2}f_{l}^{(i_l)}, 
	\end{equation}
	 where $\sum_{l=1}^{k-2}i_{l}=(k-1-j)+1=k-j,
	 0\leq i_l \leq k-1-l$. 
	 \quad
	 If we pose 
	 
	 \begin{equation}\label{eq:Equation_24}
	 	n_{i_1,...,i_l,...,i_{k-1}}^{(k)}=\sum_{\sum_{l=1}^{k-2}\tilde{i_l}=k-1-j, 0\leq \tilde{i_l}\leq k-1-l } \sum_{\hspace{0.5cm}l'=1,(\tilde{i}_1 ,...,\tilde{i}_{l'}+1  ,...,\tilde{i}_l ,...,\tilde{i}_{k-1})=(i_1,...,i_l,...,i_{k-1})}^{k-1}n_{\tilde{i}_1,...,\tilde{i}_l ,...,\tilde{i}_{k-2}}^{(k-1)}+ n_{i_1,...,i_l,...,i_{k-2}}^{(k-1)}
	 \end{equation}
	  this equation satisfies \eqref{eqs:Equations_2} in the case $|\mathcal{I}max\big((i_l)_{l=1}^{k-1}\big)|=0$.
	  Then there is the case, always associated with $\partial_{x}^{(j)}f$, of the collection of elements $f_{k-1}^{(1)}\prod_{l=1}^{k-2}f_{l}^{(i_l)}$, and we get from \eqref{eq:Equation_10} a unique solution $n_{i_1,...,i_l,...,i_{k-2}}^{(k-1)}$. This case satisfes the condition $\mathcal{I}max\big((i_l)_{l=1}^{k-1}\big)=\{k-1\}$ of \eqref{eqs:Equations_2}. 
	  In any case, both cases can be combined in the following, as in the case where $j=1$
	  
	  \begin{equation}\label{eq:Equation_25}
	  	n_{i_1,...,i_l,...,i_{k-1}}^{(k)}=	\sum_{\sum_{l=1}^{k-2}\tilde{i_l}=k-1-j, 0\leq \tilde{i_l}\leq k-1-l } \sum_{\hspace{0.5cm}l'=1,(\tilde{i}_1 ,...,\tilde{i}_{l'}+1  ,...,\tilde{i}_l ,...,\tilde{i}_{k-1})=(i_1,...,i_l,...,i_{k-1})}^{k-1}n_{\tilde{i}_1,...,\tilde{i}_l ,...,\tilde{i}_{k-2}}^{(k-1)},
	  \end{equation}
  where $\sum_{l=1}^{k-1}i_{l}=k-j$,
  $0\leq i_l \leq k-l$, in particular equation \eqref{eq:Equation_25} is the general case of \eqref{eq:Equation_21} . 
  And finally, by posing
   $$n_{\underbrace{0,...,0}_{(k-1)-times}}^{(k)} =n_{\underbrace{0,...,0}_{(k-2)-times}}^{(k-1)},$$ 
   we note that this equation satisfies \eqref{eqs:Equations_2} for condition $|\mathcal{I}max\big((i_l)_{l=1}^{k-1}\big)|=0$ .
   
   \quad
   To conclude the proof. We can derive from the equations \eqref{eq:Equation_10} and \eqref{eq:Equation_25} that
   
   $$\hspace{-2.4cm}\partial_{x}D_{f_{l},1}^{(k-1)}f=\sum_{j=1}^{k}\bigg(\sum_{\substack
   	{\sum_{l=1}^{k-1}i_{l}=k-j,
   		0\leq i_l \leq k-l }}\bigg(\hspace{0.2cm} \sum_{\sum_{l=1}^{k-2}\tilde{i_l}=k-1-j, 0\leq \tilde{i_l}\leq k-1-l } \sum_{\hspace{0.5cm}l'=1,(\tilde{i}_1 ,...,\tilde{i}_{l'}+1  ,...,\tilde{i}_l ,...,\tilde{i}_{k-1})=(i_1,...,i_l,...,i_{k-1})}^{k-1}n_{\tilde{i}_1,...,\tilde{i}_l ,...,\tilde{i}_{k-2}}^{(k-1)}\bigg)\prod_{l=1}^{k-1}f_{l}^{(i_l)}\bigg)\cdot$$
   $$\cdot \partial_{x}^{(j)}f=$$
   $$=\sum_{j=1}^{k}\bigg(\sum_{\substack
   	{\sum_{l=1}^{k-1}i_{l}=k-j,
   		0\leq i_l \leq k-l }} n_{i_1,...,i_l,...,i_{k-1}}^{(k)}\prod_{l=1}^{k-1}f_{l}^{(i_l)}\bigg)\partial_{x}^{(j)}f.$$

\end{proof}

\begin{remark}
	We can see that the coefficients of \eqref{eq:Equation_9}
	\begin{equation}\label{eq:Equation_22}
	 n_{i_{1},\underbrace{0,...,0}_{(k-2)-times}}^{(k)}=\binom{k-1}{i_1}\hspace{0.1cm}with \hspace{0.1cm}i_1\neq 0.
	\end{equation}
 Let's prove by induction on $k$. For $k=2$ the base of induction we get $$n_{1}^{(2)}=1=\binom{1}{1}=\binom{k-1}{i_1}$$ for $i_1=1$ and $k=2$.
 Then let's move on to the inductive hypothesis, i.e
 $$n_{i_{1},\underbrace{0,...,0}_{(k-3)-times}}^{(k-1)}=\binom{k-2}{i_1}.$$
 Since the subscription of $n_{i_1,...,i_l,...,i_{k-1}}^{(k)} $ of \eqref{eq:Equation_9} are bound by costraints $\sum_{l=1}^{k-1}i_{l}=k-j,
 0\leq i_l \leq k-l$ necessarily $i_1=k-1$ or $i_1=k-j$ with $j>1$.
 Let's see the first case. For the \eqref{eqs:Equations_2}, we have that
 $$n_{k-1,\underbrace{0,...,0}_{(k-2)-times}}^{(k)}=n_{k-2,\underbrace{0,...,0}_{(k-3)-times}}^{(k-1)}=\binom{k-2}{k-2}=1=\binom{k-1}{k-1}.$$
 In the second case again for \eqref{eqs:Equations_2} we get
   $$n_{k-j,\underbrace{0,...,0}_{(k-2)-times}}^{(k)}=n_{k-j,\underbrace{0,...,0}_{(k-3)-times}}^{(k-1)}+n_{k-j-1,\underbrace{0,...,0}_{(k-3)-times}}^{(k-1)}=\binom{k-2}{k-j}+\binom{k-2}{k-j-1}=\binom{k-1}{k-j}.$$
  
\end{remark}	
	
	 What we have just demonstrated about the coefficients \eqref{eq:Equation_22} reveals that Leibniz's generalized formula applied to the product $f_1\partial_{x}f=D_{f_1}f$, i.e.
	 
	 $$ \partial_{x}^{(k-1)}f_1\partial_{x}f=\sum_{j=0}^{k-1}\binom{k-1}{j}f_{1}^{(j)}\partial_{x}^{k-j}f$$
	  
	 is a particular case of a nested derivative of order $k$ when $f_{k-1}=f_{k-2}=...=f_{k-j}=...=f_{2}=1$.
	 
	 \quad
	 
	 Given that our object of study is nonlinear polynomial operators in $u$, that is, $	\mathcal{O}_x u \in \mathcal{C}^{\infty}(U;\mathbb{C})\big[\partial^{(k)}_{x}u \big]_{k=0}^{l}$, then we'll study the nested derivative $\partial_{x}D_{f_{l},1}^{(k-1)}P(u)$ applied to $P(u) \in \mathbb{C}[u]$. Let's begin to compute explicitly $ \partial_{x}D_{f_{l},1}^{(k-1)}P(u)$. In order to do this, in addition to \eqref{eq:Equation_9}, we make use of Faa' di Bruno's Formula on $P(u)$(see \cite{faa1855sullo}).
	 
	 \begin{equation}\label{eq:Equation_11}
	 \partial_{x}D_{f_{l},1}^{(k-1)}P(u)=\sum_{j=1}^{k}\bigg(\sum_{\substack
	 	{\sum_{l=1}^{k-1}i_{l}=k-j,
	 		0\leq i_l \leq k-l }} n_{i_1,...,i_l,...,i_{k-1}}^{(k)}\prod_{l=1}^{k-1}f_{l}^{(i_l)}\bigg)\partial_{x}^{(j)}P(u)= 
 		\end{equation}
 		
 		$$=\sum_{j=1}^{k}\bigg(\sum_{\substack
 			{\sum_{l=1}^{k-1}i_{l}=k-j,
 				0\leq i_l \leq k-l }} n_{i_1,...,i_l,...,i_{k-1}}^{(k)}\prod_{l=1}^{k-1}f_{l}^{(i_l)}\bigg)\sum_{i=1}^{j}\partial_{u}^{(i)}P(u)B_{j,i}\big((u^{(i)}(x,t))_{i=1}^{j-i+1}\big),$$
 			
 			\quad
 			where $B_{j,i}\big((u^{(i)}(x,t))_{i=1}^{j-i+1}\big)$ is Bell's polynomial (for this see \cite{bell1927partition}) applied to partial derivatives of $u(x)$ i.e.
 			
 			$$B_{j,i}\big((u^{(i)}(x,t))_{i=1}^{j-i+1}\big)=\sum_{\sum_{l=1}^{j-i+1}j_{l}=i,\sum_{l=1}^{j-i+1}lj_{l}=j }\frac{j!}{j_{1}!j_{2}!...j_{j-i+1}!}\prod_{l=1}^{j-i+1}\Bigg(\frac{u^{(l)}}{l!}\Bigg)^{j_{l}}.$$
 			
 			Now if $P(u)=\sum_{s=0}^{m}b_s u^{s}(x,t)$, we get 
 			
 			$$\sum_{i=1}^{j}\partial_{u}^{(i)}P(u)B_{j,i}\big((u^{(i)}(x,t))_{i=1}^{j-i+1}\big)=\sum_{i=1}^{j}\sum_{s=i}^{m}b_{s}\prod_{t=0}^{i-1}(s-t)u^{s-i}\sum_{\sum_{l=1}^{j-i+1}j_{l}=i,\sum_{l=1}^{j-i+1}lj_{l}=j }\frac{j!}{j_{1}!j_{2}!...j_{j-i+1}!}\prod_{l=1}^{j-i+1}\Bigg(\frac{u^{(l)}}{l!}\Bigg)^{j_{l}}.$$
 			
 			And thus \eqref{eq:Equation_11} becomes
 			
 			\begin{equation}\label{eq:Equation_12}
 				\partial_{x}D_{f_{l},1}^{(k-1)}P(u)=
 			\end{equation}

 			$$\hspace{-2cm}=\sum_{j=1}^{k}\bigg(\sum_{\substack
 				{\sum_{l=1}^{k-1}i_{l}=k-j,
 					0\leq i_l \leq k-l }} n_{i_1,...,i_l,...,i_{k-1}}^{(k)}\prod_{l=1}^{k-1}f_{l}^{(i_l)}\bigg)\sum_{i=1}^{j}\sum_{s=i}^{m}b_{s}\prod_{t=0}^{i-1}(s-t)u^{s-i}\sum_{\sum_{l=1}^{j-i+1}j_{l}=i,\sum_{l=1}^{j-i+1}lj_{l}=j }\frac{j!}{j_{1}!j_{2}!...j_{j-i+1}!}\prod_{l=1}^{j-i+1}\Bigg(\frac{u^{(l)}}{l!}\Bigg)^{j_{l}},$$
 				
 				\quad
 				
 				that by shifting the powers of $u^{s-i} $ towards the products of the powers of the partial derivatives of $u$  i.e $\prod_{l=1}^{j-i+1}\Bigg(\frac{u^{(l)}}{l!}\Bigg)^{j_{l}}$, we obtain
 				
 				\begin{equation}\label{eq:Equation_13}
 						\partial_{x}D_{f_{l},1}^{(k-1)}P(u)=
 						\end{equation}
 					
 					$$	
 				\hspace{-2cm}=	\sum_{j=1}^{k}\sum_{i=1}^{j}\sum_{s=1}^{m}\sum_{\sum_{l=1}^{j-i+1}j_{l}=i,\sum_{l=1}^{j-i+1}lj_{l}=j }\Bigg[\sum_{\substack
 							{\sum_{l=1}^{k-1}i_{l}=k-j,
 								0\leq i_l \leq k-l }}n_{i_1,...,i_l,...,i_{k-1}}^{(k)}b_{s}\prod_{t=0}^{i-1}(s-t)\frac{j!}{j_{1}!{2!}^{j_{2}}j_{2}!...{(j-i+1)!}^{j_{j-i+1}}j_{j-i+1}!}\prod_{l=1}^{k-1}f_{l}^{(i_l)}\Bigg]\cdot$$
 							$$\cdot u^{s-i} \prod_{l=1}^{j-i+1}\big(u^{(l)}\big)^{j_{l}}. $$

\quad

From \eqref{eq:Equation_13} we can see explicitly as $	\partial_{x}D_{f_{l},1}^{(k-1)}P(u)$ differential non-linear operator polynomial respect with to $u$, when $m>1$ i.e $\partial_{x}D_{f_{l},1}^{(k-1)}P(u) \in \mathcal{C}^{\infty}(U;\mathbb{C})\big[\partial^{(k)}_{x}u \big]_{k=0}^{l}$ with bases $u^{s-i} \prod_{l=1}^{j-i+1}\big(u^{(l)}\big)^{j_{l}}$ and coefficients $$\Bigg[\sum_{\substack
	{\sum_{l=1}^{k-1}i_{l}=k-j,
		0\leq i_l \leq k-l }}n_{i_1,...,i_l,...,i_{k-1}}^{(k)}\prod_{l=1}^{k-1}f_{l}^{(i_l)}b_{s}\prod_{t=0}^{i-1}(s-t)\frac{j!}{j_{1}!{2!}^{j_{2}}j_{2}!...{(j-i+1)!}^{j_{j-i+1}}j_{j-i+1}!}\Bigg].$$ 			
			
\quad

To compact the notation of \eqref{eq:Equation_13}, we want to rewrite this equation as \eqref{eq:Equation_3}, i.e.

\begin{equation}\label{eq:Equation_14}
		\partial_{x}D_{f_{l},1}^{(k-1)}P(u)=\sum_{I(s,i,j) \in \mathcal{D}_{m}^{(k)}}\hat{\mathcal{D}}_{I(s,i,j)}^{(k)}(f_{l}, b_{s})\mathcal{P}_{k,x}^{I(s,i,j)}u,
\end{equation}

where $$\mathcal{D}_{m}^{(k)}:=\{ I(s,i,j) \in \mathbb{N}^{k+1} | 1\leq s \leq m, 1\leq i \leq j, 1 \leq j \leq k \},	$$	
 			
and  $$I(s,i,j)	=(i_{0}(s,i,j),..., i_{l}(s,i,j),...,i_{k}(s,i,j))	$$ with $	i_{0}(s,i,j)=s-i$ and $$i_{l}(s,i,j)=\begin{cases}
	j_{l} \\
	0 \hspace{0.2cm}if\hspace{0.2cm} l>j-i+1
\end{cases}$$ 
 			
 such that 	$\sum_{l=1}^{j-i+1}j_{l}=i,\sum_{l=1}^{j-i+1}lj_{l}=j $.

And then
 $$\hat{\mathcal{D}}_{I(s,i,j)}^{(k)}(f_{l}, b_{s}):=\Bigg[\sum_{\substack
{\sum_{l=1}^{k-1}i_{l}=k-j,
	0\leq i_l \leq k-l }}n_{i_1,...,i_l,...,i_{k-1}}^{(k)}\prod_{l=1}^{k-1}f_{l}^{(i_l)}b_{s}\prod_{t=0}^{i-1}(s-t)\frac{j!}{j_{1}!{2!}^{j_{2}}j_{2}!...{(j-i+1)!}^{j_{j-i+1}}j_{j-i+1}!}\Bigg].$$ 

And furthermore $$ \sum_{I(s,i,j) \in \mathcal{D}_{m}^{(k)}}=\sum_{j=1}^{k}\sum_{i=1}^{j}\sum_{s=1}^{m}\sum_{\sum_{l=1}^{j-i+1}j_{l}=i,\sum_{l=1}^{j-i+1}lj_{l}=j }$$.	

\quad		
 \section{Criteria for determining whether an operator $\mathcal{O}_x u$  is an algebraic combination of nested derivatives}\label{se:Section_4}	
 
 Now we want to find a criterion to verify if an operator 	$\mathcal{O}_x u \in \mathcal{C}^{\infty}(U;\mathbb{C})\big[\partial^{(k)}_{x}u \big]_{k=0}^{l}$ can break down into terms of nested derivatives, i.e   	$\mathcal{O}_x u \in (\partial_{x}D_{f_{l},1}^{(k-1)}P(u))_{k=2}^{n}$ for opportune functions $f_{l}(x) \in \mathcal{C}^{\infty}(U;\mathbb{C})$ with $2\leq l \leq n$ and an opportune polynomial $P(u) \in \mathbb{C}[u]$.
 
 \quad
 
 If there's a finite sequence of point $\mathcal{F}$ in $\mathbb{N}^{l'+1}$ associated with $\mathcal{O}_x u$, i.e
 
 $$
 \mathcal{O}_x u=\sum_{I \in \mathcal{F}} A_I (x,t) \mathcal{P}_{l',x}^{I}u,$$
 
such that it exists a family of sub-successions $\{ \mathcal{F}^{(j')} \}$ such that $\mathcal{F}=\bigsqcup^{t'}_{j'=1}\mathcal{F}^{(j')}$ where
 
 \begin{equation}\label{eq:Equation_15}
 	|\mathcal{F}^{(j')}|=|\mathcal{D}_{m}^{(k_{j'})}| 	
 \end{equation}
	
and it exists a bijective correspondence  $\phi^{(j')}$

$$ \phi^{(j')}: \mathcal{D}_{m}^{(k_{j'})}\longrightarrow  \mathcal{F}^{(j')}$$
$$                I(s,i,j) \longrightarrow I $$

such that $\forall I \in \mathcal{F}^{(j')}$
\quad

\begin{equation}\label{eq:Equation_16}
	 A_I (x,t)=a_{j'}(x,t)\hat{\mathcal{D}}_{I(s,i,j)}^{(k_{j'})}(f_{l}, b_{s}),
\end{equation}

\begin{equation}\label{eq:Equation_17}
	\mathcal{P}_{l',x}^{I}u=\prod_{j''=0}^{l''}\big(\partial_{x}^{(j'')}u\big)^{i_{j''}}\mathcal{P}_{k_{j'},x}^{I(s,i,j)}u
\end{equation}

if $\phi^{(j')}(I(s,i,j))=I$.

\quad

Such a family can be called a $nested$  $partition$, if it satisfies \eqref{eq:Equation_15}, \eqref{eq:Equation_16}, and \eqref{eq:Equation_17}.

So we can see that 

$$\mathcal{O}_x u=\sum_{I \in \mathcal{F}} A_I (x,t) \mathcal{P}_{l',x}^{I}u=$$

$$=\sum_{j'=1}^{t'}\sum_{I \in \mathcal{F}^{(j')} }A_I (x,t)\mathcal{P}_{l',x}^{I}u=\sum_{j'=1}^{t'}\sum_{I(s,i,j) \in \mathcal{D}_{m}^{(k_{j'})}  }a_{j'}(x,t)\hat{\mathcal{D}}_{I(s,i,j)}^{(k_{j'})}(f_{l}, b_{s})\prod_{j''=0}^{l''}\big(\partial_{x}^{(j'')}u\big)^{i_{j''}}\mathcal{P}_{k_{j'},x}^{I(s,i,j)}u= $$

$$=\sum_{j'=1}^{t'}a_{j'}(x,t)\prod_{j''=0}^{l''}\big(\partial_{x}^{(j'')}u\big)^{i_{j''}}\sum_{I(s,i,j) \in \mathcal{D}_{m}^{(k_{j'})}}\hat{\mathcal{D}}_{I(s,i,j)}^{(k_{j'})}(f_{l}, b_{s})\mathcal{P}_{k_{j'},x}^{I(s,i,j)}u= $$

$$=\sum_{j'=1}^{t'}a_{j'}(x,t)\prod_{j''=0}^{l''}\big(\partial_{x}^{(j'')}u\big)^{i_{j''}}	\partial_{x}D_{f_{l},1}^{(k_{j'}-1)}P(u),                                            $$
\quad
i.e, $$\mathcal{O}_x u \in ( \partial_{x}D_{f_{l},1}^{(k-1)}P(u)  )_{k=2}^{\overline{k}},$$ with an opportune $\overline{k}\geq k_{j'}$ for $ 1 \leq j' \leq t'$ and an opportune polynomial $\sum_{s=0}^{m}b_{s}u^{s}$ and functions $f_{l}(x)$ with $1 \leq l \leq max_{1\leq j' \leq t'} \{k_{ j'}\}$.

\quad
From these facts we can find a criteria to determine if an operator $\mathcal{O}_x u$ is an algebraic combination of nested derivative of the type $\partial_{x}D_{f_{l},1}^{(k-1)}P(u)$, and it consists of verifying if there exsits a sequence of multi-index that characterizes $\mathcal{O}_x u$, i.e, $\mathcal{F}$ (see \eqref{eq:Equation_3} ) 
may have a nested partition, i.e, if it satisfies \eqref{eq:Equation_15}, \eqref{eq:Equation_16}, \eqref{eq:Equation_17}. This implies that, once condition \eqref{eq:Equation_15} has been met, solving differential equations \eqref{eq:Equation_16}, and \eqref{eq:Equation_17}. This is not always straightforward, even with just one function $f_{l}$ for \eqref{eq:Equation_16}.

\quad

Let’s look at an example of a non-linear differential operator to which we apply the criterion we have just discussed.

\begin{example}\label{ex:Example_1}
Given a differential operator
\begin{equation}\label{eq:Equation_18}
 \mathcal{O}_x u=n(n-1)\sinh x u^{n-2}(\partial_x u)^{2}+n\sinh x u^{n-1}\partial_{x}^{(2)}u+ n\cosh x u^{n-1}\partial_{x}u,
\end{equation}
with a positive integer $n>1.$ From \eqref{eq:Equation_18}, we can guess that the operator could be associated to $\partial_{x}D_{f_{l},1}^{1}P(u)$ with $deg_{u}P(u)=n$. Applying the criteria that we have just seen to equation \eqref{eq:Equation_18}, we consider the following equations

$$n(n-1)\sinh x=n_{0}^{(2)}n(n-1)b_{n}\frac{2!}{1!2!}f_{1},$$

associated with $u^{n-2}(\partial_x u)^{2}.$

$$n\sinh x=n_{0}^{(2)}nb_{n}\frac{2!}{0!1!1!2!}f_{1},$$

associated to $u^{n-1}\partial_{x}^{(2)}u.$

$$ n\cosh x=n_{1}^{(2)}n\frac{1!}{1!1!}f_{1}^{(1)},$$

associated to $u^{n-1}\partial_{x}u.$

Since $=n_{0}^{(2)}=n_{1}^{(2)}=1$ (see \eqref{eqs:Equations_2}), the solution of this system of equations are $b_{n}=1$, $f_{1}=\sinh x$.

\quad

Then, given that no other coefficients $b_{s}$ apart from $b_{n}$, we obtain that
$P(u)=u^{n}$ so we are proved that

$$ \mathcal{O}_x u=\partial_{x}D_{\sinh x,1}^{1}u^{n}=\partial_{x}\sinh x \partial_{x} u^{n}.$$
\end{example}

\section{Criteria for determining whether elementary solutions exist for PDE $ \partial_{x}D_{f_{l},1}^{(k-1)}P(u)=0$ and applications to nonlinear PDEs in one-dimensional space-time}\label{se:Section_5}	

\quad
First of all, we must make it clear what is meant by elementary function. To make this, we must have a brief introduction to differential algebra. In particular, we consider the differential fields, i.e, $(K,d)$ where $K$, is a field and $d$ is a linear application over a subfield, called the field of constant $C \subseteq K$, such that $\forall c \in C$ and $d(c)=0$ and furthermore $d$ satisfies the Leibniz principle $d(hg)=d(h)g+hd(g)$. A simple differential extension of fields $K \subset K(g)$ is called elementary if it is exponential $d(g)= d(s)g$ for some $s \in K$ or logarithmic $d(g)=\frac{d(s)}{s}$ with $ s \in K$ or algebraic (i.e, $g$ is a solution of a polynomial equation irreducible in $K$).
For the purpose of this article, we consider as elements of opportune fields the numeric functions $f(x)$ of the real variable $x$ and $d= \partial_{x}$ the usual derivative, and we shall take the field of rational algebraic functions as our initial field $\mathbb{C}(x)$.
With these tools, we can say what we mean by elementary functions.

\begin{definition}\label{df:Definition_2}
A function $f(x) \in K$ where $K$ is a differential field is an elementary function if it exists a finite chain of elementary extensions of fields that begins with $\mathbb{C}(x)$ and ends with $K$

$$\mathbb{C}(x) \subset K_1 \subset ...\subset K_l\subset ....\subset K_t=K$$
	
\end{definition} 

Elementary functions, for example, are the functions studied in high school, e.g, $\cos(x)$, $e^{x}$, $\sqrt{x-3}$ etc...

\quad

In this type of language, finding primitive families of a function $f$ using indefinite integrals amounts to determining a function $g$ such that $(g(x))'=f(x)$. In particular, we want to know when $g$ is an elementary function. Liouville's theorem comes to our aid here, stating that $g$ is elementary if 
\begin{equation}\label{eq:Equations_26}
	f(x)=\sum_{i=1}^{n}c_i \frac{(h_i(x))'}{h_i(x)}+ (s(x))',
\end{equation}
where $c_i \in \mathbb{C}$ and $h_i(x)$, $s(x)$ are elementary functions. However, if the reader wishes to explore this topic in greater depth, we recommend consulting the text \cite{de2014teorema}.
\quad

Now let's start finding the solutions of the ODE

\begin{equation}\label{eq:Equation_28}
	\partial_{x}D_{f_{l},1}^{(k-1)}u=0.
\end{equation}

It can be set that a solution of $\partial_{x}f_1 \partial_{x}u=0$ is a solution of $\partial_{x}D_{f_{l},1}^{(k-1)}u=0$, which, fixing an function $f(t)$, we can call such a solution $u_1 = u_1(x,t)$ and $$\partial_{x}f_1 \partial_{x}u=0 \rightarrow f_1 \partial_{x}u_1=f(t).$$
$$\partial_{x}u_1=\frac{f(t)}{f_1(x)}\rightarrow u_1(x,t)=f(t)R_{f_1}(x).$$

\quad 

where 

\begin{equation}\label{eq:Equation_27}
	R_{f_1}(x):= \int_{x_0}^{x}\frac{1}{f_1(s)}ds,
\end{equation}
 with $x_0$ an opportune initial condition of ODE $\partial_{x}D_{f_{l},1}^{(k-1)}u=0.$
 In addition, solutions that meet ODE requirements $\partial_{x}D_{f_{l},1}^{(2)}u=0$, i.e, $\partial_{x}f_2 \partial_x f_1 \partial u(x,t)=0$, are possible solutions of \eqref{eq:Equation_28}. Let's see how we can work them out
 $$ f_2 \partial_{x}f_1 \partial u(x,t)=f(t) $$
 for an opportune choice of $f(t)$.
 
Then 

$$\partial_{x}f_1\partial_{x}u=\frac{f(t)}{f_2(x)} \rightarrow f_1\partial_{x}u=f(t)R_{f_2}(x)$$

where $R_{f_2}(x)$ is defined by \eqref{eq:Equation_27} .

$$ \partial_{x}u =f(t) \frac{R_{f_2}(x)}{f_1 (x)}.$$

Thus the possible solution of \eqref{eq:Equation_28}  which we'll call $u_2(x,t)$ is

$$ u_2(x,t)=f(t)\int_{x_0}^{x}\frac{R_{f_2}(s)}{f_1 (s)}ds,$$

 where, we recall again, $x_0$ is an opportune choice of initial condition of ODE  \eqref{eq:Equation_28}. We can call the solutions $u_j(x,t)$ associated with the ODE $\partial_{x}D_{f_{l},1}^{(j)}u=0$ and determined in the same way as we did in the previous case for $j=1$ and $j=2$. The next step is trying it generalize this procedure. It can be noted that every solution $u_j$ is a solution of $\eqref{eq:Equation_28}$ for $j\leq k-1$.
 Let's start again for $j=3$
 
 $$\partial_{x}D_{f_{l},1}^{(3)}u=0 \rightarrow   D_{f_{l},1}^{(3)}u=f(t)$$
 
 $$\partial_{x}D_{f_{l},1}^{(2)}u=\frac{f(t)}{f_3(x)} \rightarrow D_{f_{l},1}^{(2)}u=f(t) R_{f_3}(x) \rightarrow \partial_{x}D_{f_{l},1}^{(1)}u=f(t)\frac{R_{f_3}(x)}{f_2 (x)} $$
 
 $$ D_{f_{l},1}^{(1)}u=f(t) g_{2:3} (x)$$
 
 where $g_{2:3}:= \int_{x_0}^{x}\frac{R_{f_3}(s)}{f_2 (s)}ds$.
 
 So $u_3$ will be
 
 $$ u_3(x,t)=f(t)\int_{x_0}^{x}\frac{g_{2:3}(s)}{f_1(s)}ds.$$
 
 For $j>3$ we get
 
 \begin{proposition}\label{pr:Proposition_2}
 	Given the ODE
 	
 	$$\partial_{x}D_{f_{l},1}^{(j)}u=0.$$
 	 Where $f_{l}(x)$ are known functions of class at least $C^{j+1} $.
 	 The possible solution of this equation is 
 	 
 	 $$ u_j(x,t)=f(t)\int_{x_0}^{x}\frac{g_{2:j}(s)}{f_1(s)}ds,$$
 	 
 	 where $g_{2:j}(s)$ is a function determined recursively in this way
 	 
 	 $$g_{2:j}=\int_{x_0}^{x}\frac{g_{3:j}(s)}{f_2(s)}ds $$
 	 
 	 $$ g_{i:j}=\int_{x_0}^{x}\frac{g_{i+1:j}(s)}{f_i(s)}ds \hspace{0.5cm} 2 \leq i \leq j-1$$
 	 
 	 $$  g_{j-1:j}= \int_{x_0}^{x}\frac{R_{f_j}(s)}{f_{j-1} (s)}ds,$$
 	
 	where $x_0$ is an opportune initial condition of $ODE$, and $g_{i:j}$ we can call functions $resolving$ of the nested derived equation of order $j+1$.
 \end{proposition}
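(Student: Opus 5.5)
The plan is to prove the statement by induction on $j$, peeling off one operator $\partial_x f_l$ at a time from the outside. The cases $j=1,2,3$ have already been computed above and serve as the base. Throughout, $x_0$ is fixed in an interval where all $f_l$ are nonvanishing and of class $C^{j+1}$, so that every quotient $1/f_l$ is continuous and every integral $\int_{x_0}^x$ is defined.

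\textbf{Outer layer.} Write
\[
\partial_{x}D_{f_{l},1}^{(j)}u=\partial_x\big(f_j\,\partial_x D_{f_l,1}^{(j-1)}u\big).
\]
Since $\partial_x$ acts only in $x$, the equation $\partial_{x}D_{f_{l},1}^{(j)}u=0$ means that $f_j\,\partial_x D_{f_l,1}^{(j-1)}u$ depends on $t$ alone. We choose this function to be $f(t)$, and set the further additive constants of integration to zero by integrating from $x_0$. Dividing by $f_j$ and integrating gives
\[
D_{f_l,1}^{(j-1)}u=f(t)\,R_{f_j}(x),
\]
with $R_{f_j}$ as in \eqref{eq:Equation_27}.

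\textbf{Descending step.} Now write $D_{f_l,1}^{(j-1)}u=f_{j-1}\,\partial_x D_{f_l,1}^{(j-2)}u$. Dividing by $f_{j-1}$ and integrating from $x_0$ gives
\[
D_{f_l,1}^{(j-2)}u=f(t)\,g_{j-1:j}(x).
\]
The key claim, proved by descending induction on $i$, is
\[
D_{f_l,1}^{(i-1)}u=f(t)\,g_{i:j}(x)\qquad\text{for } i=j-1,\dots,2,
\]
where $D_{f_l,1}^{(i-1)}$ denotes $D_{f_{i-1}}\cdots D_{f_1}$. The inductive step is the same as the first one. Write $D_{f_l,1}^{(i-1)}u=f_{i-1}\,\partial_x D_{f_l,1}^{(i-2)}u$, divide by $f_{i-1}$, and integrate from $x_0$. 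This yields $f(t)\int_{x_0}^x g_{i:j}/f_{i-1}\,ds=f(t)\,g_{i-1:j}$, which is exactly the stated recursion. For $i=2$ this gives $f_1\,\partial_x u=f(t)\,g_{2:j}$, and one last division and integration produces the formula for $u_j$.

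\textbf{Converse check.} Conversely, I would verify directly that this $u_j$ solves the equation. Applying $D_{f_1}$ to $u_j$ gives $f(t)\,g_{2:j}$ by the fundamental theorem of calculus. Applying $D_{f_2}$ then gives $f(t)\,g_{3:j}$, and so on up to $D_{f_j}$, which gives $f(t)$. The final $\partial_x$ kills $f(t)$.

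\textbf{Main obstacle.} The only delicate point is the indexing of the recursion. The statement writes the range $2\le i\le j-1$ but also gives $g_{j-1:j}$ separately in terms of $R_{f_j}$. I would make this consistent by setting $g_{j:j}:=R_{f_j}$, so that the single rule $g_{i:j}=\int_{x_0}^x g_{i+1:j}/f_i\,ds$ covers all $2\le i\le j-1$. I would also stress that "the possible solution" means the particular family obtained by setting the integration constants to zero at each step. The general solution would instead add a polynomial-like combination of the lower $u_i$ with arbitrary functions of $t$, and I would remark on this rather than prove it.
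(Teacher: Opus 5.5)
Your argument is correct, but it organizes the induction differently from the paper. You fix $j$ and strip the operators from the outside in: $f_j\,\partial_x D^{(j-1)}_{f_l,1}u=f(t)$ gives $D^{(j-1)}_{f_l,1}u=f(t)R_{f_j}$. A descending induction on the layer index $i$ then yields $D^{(i-1)}_{f_l,1}u=f(t)\,g_{i:j}$ down to $i=2$. This is an induction on $i$ at fixed $j$, so the base cases $j=1,2,3$ you announce are never actually used.

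The paper instead inducts on $j$ by stripping the \emph{innermost} layer. It sets $\bar u=f_1\partial_x u$ and $\bar f_l=f_{l+1}$, so that $\partial_x D^{(j)}_{f_l,1}u=\partial_x D^{(j-1)}_{\bar f_l,1}\bar u$, and applies the inductive hypothesis (the case $j-1$) to $\bar u$. It then needs the re-indexing $g_{3:j}=\bar g_{2:j-1}$ before a final integration against $1/f_1$. In general this means $g_{i+1:j}=\bar g_{i:j-1}$ and $R_{f_j}=\bar R_{\bar f_{j-1}}$, which the paper leaves implicit. Its base case $j=4$ is exactly your outside-in computation, written out for one value of $j$.

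Your route follows the top-down definition of the resolving functions directly, avoids all index shifting, and works uniformly for every $j$. The paper's route reduces the order by one at each step and reuses the proposition itself for the shifted family $f_2,\dots,f_j$, at the cost of that bookkeeping. Two of your additions are worthwhile and not supplied by the paper's proof. The first is your converse check: applying $D_{f_1},\dots,D_{f_j}$ to $u_j$ recovers $f(t)$, and $\partial_x$ then kills it. The second is your convention $g_{j:j}:=R_{f_j}$, which reconciles the range $2\le i\le j-1$ with the separate formula for $g_{j-1:j}$.
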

 \begin{proof}
 	Let's prove by induction on $j$.
 	We prove the base induction $j=4$
 	$$\partial_{x}D_{f_{l},1}^{(4)}u=0 \rightarrow \partial_{x}D_{f_{l},1}^{(3)}u=\frac{f(t)}{f_4(x)} \rightarrow D_{f_{l},1}^{(3)}u=f(t)R_{f_4}(x) \rightarrow \partial_{x}D_{f_{l},1}^{(2)}u=f(t)\frac{R_{f_4}(x)}{f_3(x)}$$
 	
 	$$D_{f_{l},1}^{(2)}u=f(t)g_{3:4}(x) \rightarrow \partial_{x}D_{f_{l},1}^{(1)}u =f(t)\frac{g_{3:4}(x)}{f_2(x)}\rightarrow D_{f_{l},1}^{(1)}u=f(t)g_{2:4}(x) \rightarrow \partial_{x}u=f(t)\frac{g_{2:4}}{f_1(x)} $$
 	$$ u_4(x,t)= f(t)\int_{x_0}^{x}\frac{g_{2:4}(s)}{f_1(s)}ds.$$
 	
 	For inductive hypothesis, we get that
 	
 	$$u_{j-1}=f(t)\int_{x_0}^{x}\frac{g_{2:j-1}(s)}{f_1(s)}ds.$$
 	
 	Let's see how to use this to prove the proposition.
    We can rewrite $\partial_{x}D_{f_{l},1}^{(j)}u=0.$ as $ \partial_{x}D_{\bar{f}_{l},1}^{(j-1)}\bar{u}=0$ where $\bar{u}=D_{f_{l},1}^{(1)}u$  and $\bar{f}_{l}=f_{l+1}$ for inductive hypothesis on $\bar{u}$ we get 
    $$\bar{u}_{j-1}=f(t)\int_{x_0}^{x}\frac{\bar{g}_{2:j-1}(s)}{\bar{f}_1(s)}ds \rightarrow f_1 \partial_{x} u_j =f(t)\int{x_0}^{x}\frac{\bar{g}_{2:j-1}(s)}{\bar{f}_1(s)}ds=f(t)\int{x_0}^{x}\frac{\bar{g}_{2:j-1}(s)}{f_2(s)}ds$$
     with $g_{3:j}(x)=\bar{g}_{2:j-1}(x)$ we get
     
     $$f_1 \partial_{x} u_j=f(t)\int_{x_0}^{x}\frac{g_{3:j}(s)} {f_2(s)}ds \rightarrow f_1 \partial_{x} u_j=f(t)g_{2:j}(x) \rightarrow \partial_{x} u_j=f(t)\frac{g_{2:j}(x)}{f_1(x)} \rightarrow u_j=f(t)\int_{x_0}^{x}\frac{g_{2:j}(s)}{f_1(s)}ds $$
 	
 \end{proof}

\quad

\begin{remark}\label{os:Observation_2}
	In determining the solution of \ref{pr:Proposition_2}, it should be noted that we are implying the existence the an opportune interval $I \subset \mathbb{R}$ where $x_0 \in I$ and this will be implied throughout the article for all equations. As can be seen from the fundamental theorem of calculus, the functions $resolving$ of the nested derived of \ref{pr:Proposition_2} $g_{i:j} \hspace{0.5cm} 2\leq i \leq j$, we can determine in this way
	
	\begin{equation}\label{eqs:Equations_4}
	(g_{2:j})'=	\frac{g_{3:j}}{f_2} \rightarrow f_2 =\frac{g_{3:j}}{(g_{2:j})'} 	
	\end{equation}
	$$(g_{i:j})'=	\frac{g_{i+1:j}}{f_i} \rightarrow f_i =\frac{g_{i+1:j}}{(g_{i:j})'} \hspace{0.5cm}2\leq i \leq j-1$$
		$$(g_{j-1:j})'=	\frac{R_{f_j}}{f_{j-1}} \rightarrow f_{j-1} =\frac{R_{f_j}}{(g_{j-1:j})'}                                                          $$
\end{remark}

\quad
 From \eqref{eqs:Equations_4} we can deduce that in an attempt to resolve the equation 
 
 $$\partial_{x}f_j \partial_{x}...\partial_{x}f_l \partial_{x}...\partial_{x}f_2\partial_{x} f_1\partial_{x}u=0,$$
 
 we can determine the $resolving$ functions $g_{i:j}$ by \eqref{eqs:Equations_4}, and the equation above becomes
 
 $$\partial_{x}\frac{R_{f_j}}{(g_{j-1:j})'} \partial_{x}...\partial_{x}\frac{g_{l+1:j}}{(g_{l:j})'}\partial_{x}...\partial_{x} \frac{g_{3:j}}{(g_{2:j})'}\partial_{x}f_1\partial_{x}u=0.$$
 \quad
 Surely this way to determine a solution is useful to construct some ODEs that have this type of solution; in our article, in particular, elementary functions of the solutions, for examples.
 Anyway this isn't enough to find a solution. We need to calculate 
 
 \begin{equation}\label{eq:Equation_29}
 	u_j(x,t)=f(t)\int_{x_0}^{x}\frac{g_{2:j}(s)}{f_1(s)}ds.
 \end{equation}
  
We can attempt to integrate it by parts; to achieve this, we provide a way of expressing a product of integral by parts, a method that has been known since the early years of studying calculus.

\quad

First, to simplify the notation, we want to denote as $D_{x_0}^{-1}f(x):=\int_{x_0}^{x}f(s)ds$, so in the case $u_j(x,t)=f(t)D_{x_0}^{-1}\frac{g_{2:j}(x)}{f_1(x)}$. Then we will denote $D_{x_0}^{0}f(x)=f(x)$, and we calculate recursively $D_{x_0}^{-k}f(x)=\int_{x_0}^{x}D_{x_0}^{-(k-1)}f(s)ds$.
With this notation we can remark in general that if $j <k$ $\partial_{x}^{j}D_{x_0}^{-k}f(x)= D_{x_0}^{-(k-j)}f(x)$; otherwise we will get $\partial_{x}^{j}D_{x_0}^{-k}f(x)=\partial_{x}^{j-k}f(x).$
 
Now we integrate by parts $F(x)=\int_{x_0}^{x} u(s)v(s)ds$; we get, with new notation just adopted, considering $v(x)=(V(x))'$

\begin{equation}\label{eq:Equation_30}
	\int_{x_0}^{x} u(s)(V(s))'ds=\sum_{j=0}^{k}(-1)^{j}u^{(j)}(x)D_{x_0}^{-j}V(x)-(-1)^{k}\int_{x_0}^{x}u^{(k+1)}(s)D_{x_0}^{-k}V(s)ds,
\end{equation}
 
 for a certain integer $k \geq 0$.
 
 For $k >1 $, it indicates that we get to integrate by parts for $k$-times. This is useful when you want to explicitly calculate the integral by parts, that is, to determine whether the integral can be expressed in terms of elementary functions.
 
 Now let's prove \eqref{eq:Equation_30} for induction.
 
 The base for induction is for $j=0$ is well-known integration by parts.
 Then the inductive hypothesis is 
 
 $$ 	\int_{x_0}^{x} u(s)(V(s))'ds=\sum_{j=0}^{k-1}(-1)^{j}u^{(j)}(x)D_{x_0}^{-j}V(x)-(-1)^{k-1}\int_{x_0}^{x}u^{(k)}(s)D_{x_0}^{-(k-1)}V(s)ds=$$
 
 $$=\sum_{j=0}^{k-1}(-1)^{j}u^{(j)}(x)D_{x_0}^{-j}V(x)-(-1)^{k-1}u^{(k)}(x)D_{x_0}^{-k}V(x)+(-1)^{k-1}\int_{x_0}^{x}u^{(k+1)}(s)D_{x_0}^{-k}V(s)ds=$$
 
 $$=\sum_{j=0}^{k}(-1)^{j}u^{(j)}(x)D_{x_0}^{-j}V(x)-(-1)^{k}\int_{x_0}^{x}u^{(k+1)}(s)D_{x_0}^{-k}V(s)ds.$$
 
 If we assume $\nu(x)=D_{x_0}^{-k}V(x)$ and $S(x)=\int_{x_0}^{x}u^{(k+1)}(s)D_{x_0}^{-k}V(s)ds=\int_{x_0}^{x}u^{(k+1)}(s)\nu(s)ds.$ Then \eqref{eq:Equation_30} becames
 
 \begin{equation}\label{eq:Equation_31}
 	\int_{x_0}^{x} u(s)\nu^{(k+1)}(s)ds=A\big(\sum_{j=0}^{k}(-1)^{j}u^{(j)}(x)\nu^{(k-j)}(x)\big) + B S(x)
 \end{equation}

where
\begin{equation}\label{eqs: Equations_5}
	A=1,\hspace{0.1cm} B=0 \hspace{0.1cm} if \hspace{0.1cm}S(x)=0
\end{equation}
$$A=\frac{1}{1+(-1)^{k}c} ,\hspace{0.1cm} B=0  \hspace{0.1cm} with \hspace{0.1cm} c \neq 0 \hspace{0.1cm}and \hspace{0.1cm} \frac{u^{(k+1)}(x)}{u(x)}=c\frac{\nu^{(k+1)}(x)}{\nu(x)}$$
$$A=1, \hspace{0.1cm} B=-(-1)^{k}\hspace{0.1cm} otherwise.$$

We can deduce that 

\begin{proposition}\label{pr:Proposition_3}
	Given  the integral function $F(x)=\int_{x_0}^{x} u(s)v(s)ds$, if they exist elementary functions $\nu(x)$ and $S(x)$ such that
	$\nu^{(k+1)}(x)=v(x)$ and $(S(x))'= u^{(k+1)}(x)\nu(x)$ for some positive integer $k$ then $F(x)$ is elementary function.	
\end{proposition}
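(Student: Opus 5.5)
The plan is to use the $k$-fold integration by parts formula \eqref{eq:Equation_30}, rewritten as \eqref{eq:Equation_31}, with the choice $\nu(x)$ playing the role of $D_{x_0}^{-k}V(x)$. Then $\nu^{(k+1)}=v$, and $\int_{x_0}^{x}u(s)v(s)\,ds$ is exactly the left-hand side of \eqref{eq:Equation_31}.

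First I verify the identity directly, rather than relying on the normalization $\nu=D_{x_0}^{-k}V$, which need not hold for an arbitrary elementary antiderivative. Differentiating the alternating sum, it telescopes:
\[
\frac{d}{dx}\sum_{j=0}^{k}(-1)^{j}u^{(j)}\nu^{(k-j)}=u\,\nu^{(k+1)}+(-1)^{k}u^{(k+1)}\nu .
\]
Hence
\[
F(x)=\sum_{j=0}^{k}(-1)^{j}u^{(j)}(x)\nu^{(k-j)}(x)-(-1)^{k}S(x)+C,
\]
where $C$ is a constant fixed by evaluating at $x_0$. This is the generic case $A=1$, $B=-(-1)^k$ of \eqref{eqs: Equations_5}. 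The degenerate cases $S\equiv 0$ and $u^{(k+1)}/u=c\,\nu^{(k+1)}/\nu$ only change $A$ and $B$, so they need no separate argument.

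It remains to check that the right-hand side is elementary. Since $\nu$ is elementary and the field generated by elementary functions is a differential field, every $\nu^{(k-j)}$ is elementary. By hypothesis $S$ is elementary, and constants lie in $\mathbb{C}(x)$. We also need $u^{(j)}$ to be elementary for $0\le j\le k$. This is where I expect the real obstacle: the statement says nothing explicit about $u$. I would read the hypothesis $(S)'=u^{(k+1)}\nu$ as implicitly assuming that $u$ lies in an elementary differential field, as it does in the intended application to \eqref{eq:Equation_29}. I would state this assumption explicitly.

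Finally, a finite sum of products of elements of a differential field $K$ obtained from $\mathbb{C}(x)$ by a finite chain of elementary extensions stays in $K$. Taking the compositum of the towers for $u$, $\nu$ and $S$, which is still a finite elementary tower in the sense of Definition \ref{df:Definition_2}, shows that $F$ is elementary.
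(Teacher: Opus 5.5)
Your proof is correct and is essentially the paper's argument. The paper simply invokes the $k$-fold integration by parts formula \eqref{eq:Equation_30}/\eqref{eq:Equation_31} and observes that $F$ is then a sum of elementary functions. You instead verify the telescoping identity directly for an arbitrary elementary $\nu$ with $\nu^{(k+1)}=v$, which avoids relying on the normalization $\nu=D_{x_0}^{-k}V$.

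You are also right that the paper's one-line proof tacitly assumes $u$ is elementary, and this hypothesis is genuinely needed rather than a convenience. For example, take $k=1$, $v=1$, $\nu=x^2/2$, $S=e^x$ and $u=2(x-1)\mathrm{Ei}(x)-2e^x$, where $\mathrm{Ei}(x)=\int e^x/x\,dx$. These satisfy all the stated hypotheses, since $u''=2e^x/x^2$ and hence $u''\nu=e^x$. Yet they give $F=(x^2-2x)\mathrm{Ei}(x)+(1-x)e^x+C$, which is not elementary.
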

\begin{proof}
	Under this assumption, it suffices to integrate by parts $k$-times $\int_{x_0}^{x} u(s)v(s)ds$ to determine \eqref{eq:Equation_30}, which shows that $F(x)$ is a sum of elementary functions.
\end{proof}

\quad
\begin{corollary}\label{cor:Corollary_1}
	If they exist elementary functions $\nu(x)$ and $S(x)$ such that
	$\nu^{(k+1)}(x)=\frac{1}{f_1(x)}$ and $(S(x))'= g_{2:j}^{(k+1)}(x)\nu(x)$ for some positive integer $k$ then $u_j(x,t)$ is a elementary solutions of \ref{pr:Proposition_2}.
\end{corollary}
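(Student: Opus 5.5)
The plan is to read Corollary \ref{cor:Corollary_1} as Proposition \ref{pr:Proposition_3} applied to the integral in \eqref{eq:Equation_29}. The factor $f(t)$ plays no role in elementarity with respect to $x$. By Proposition \ref{pr:Proposition_2}, the solution is
$$u_j(x,t)=f(t)\int_{x_0}^{x}\frac{g_{2:j}(s)}{f_1(s)}\,ds=f(t)F(x),$$
where $F(x)=\int_{x_0}^{x}u(s)v(s)\,ds$ with the choices $u:=g_{2:j}$ and $v:=1/f_1$. So it suffices to show that $F$ is an elementary function of $x$ in the sense of Definition \ref{df:Definition_2}. I treat $t$ as a parameter, or equivalently work over the constant field extended by the functions of $t$.

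First I check the hypotheses of Proposition \ref{pr:Proposition_3} with these choices. The assumption of the corollary gives an elementary $\nu$ with $\nu^{(k+1)}=1/f_1=v$. It also gives an elementary $S$ with $S'=g_{2:j}^{(k+1)}\nu=u^{(k+1)}\nu$. These are exactly the two requirements of Proposition \ref{pr:Proposition_3}.

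Next I use the $k$-fold integration by parts \eqref{eq:Equation_30}, in the form \eqref{eq:Equation_31}, to get
$$F(x)=\sum_{i=0}^{k}(-1)^{i}g_{2:j}^{(i)}(x)\,\nu^{(k-i)}(x)-(-1)^{k}\big(S(x)-S(x_0)\big).$$
Derivatives of elementary functions are elementary, and the elementary functions are closed under sums and products because they form a differential field. So every term on the right is elementary, provided $g_{2:j}$ is.

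The main obstacle is precisely this point: the corollary does not explicitly assume that $g_{2:j}$ is elementary, yet the boundary terms $g_{2:j}^{(i)}\nu^{(k-i)}$ involve it. I would first try to argue within the intended context, namely the construction of Remark \ref{os:Observation_2}, where the resolving functions $g_{i:j}$ are prescribed as elementary and the $f_i$ are then defined through \eqref{eqs:Equations_4}. In that setting $g_{2:j}$ is elementary by construction, and the conclusion follows from Proposition \ref{pr:Proposition_3}. If one wants an unconditional statement, I would add the hypothesis that $g_{2:j}$ is elementary, which is implicitly required by Proposition \ref{pr:Proposition_3} itself when $u$ appears in the boundary terms. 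Finally, multiplying by $f(t)$ shows that $u_j(x,t)$ is elementary in $x$ and solves $\partial_{x}D_{f_{l},1}^{(j)}u=0$ by Proposition \ref{pr:Proposition_2}.
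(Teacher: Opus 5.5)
Your argument is the paper's own. The corollary is just Proposition \ref{pr:Proposition_3} with $u=g_{2:j}$ and $v=1/f_1$, and that proposition is proved exactly by the $k$-fold integration by parts \eqref{eq:Equation_30}--\eqref{eq:Equation_31}.

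You are right that $g_{2:j}$ must itself be elementary for the boundary terms $g_{2:j}^{(i)}\nu^{(k-i)}$ to be elementary. The paper leaves this tacit (it holds in its examples, e.g. $g_{2:5}=e^{x}$), and without it the statement can fail. Also, your displayed identity for $F$ holds only up to the additive constant $\sum_{i=0}^{k}(-1)^{i}g_{2:j}^{(i)}(x_0)\nu^{(k-i)}(x_0)$, which does not affect elementarity.
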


\quad

This criterion apparently seems to be more straightforward than Liouville's  \eqref{eq:Equations_26}. But finding the functions such that $\nu^{(k+1)}(x)=\frac{1}{f_1(x)}$ and $(S(x))'= g_{2:j}^{(k+1)}(x)\nu(x)$ generally speaking is not easy.

\quad
In any case, let's summarize the following results that we have just discussed.

\begin{proposition}\label{pr:Proposition_4}
The equation \eqref{eq:Equation_28} 
$$\partial_{x}D_{f_{l},1}^{(k-1)}u=0$$ 
has elementary solutions of the type $u_j(x,t)$ for $2 \leq j \leq k-1$ if $g_{2:j}(x)\frac{1}{f_1(x)}$ proves the Liouville's theorem 
\eqref{eq:Equations_26}. In particular, $u_1(x,t)$ is an elementary solution if $\frac{1}{f_1(x)}$ proves Liouville's theorem 
\eqref{eq:Equations_26}.
\end{proposition}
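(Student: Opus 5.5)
The plan is to combine Proposition \ref{pr:Proposition_2} with Liouville's criterion \eqref{eq:Equations_26}, after first checking that each $u_j$ really solves \eqref{eq:Equation_28}.

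\textbf{Step 1: each $u_j$ solves \eqref{eq:Equation_28}.} For $j\le k-1$, the operator $\partial_{x}D_{f_{l},1}^{(k-1)}$ factors as
$$\partial_{x}D_{f_{l},1}^{(k-1)}=\partial_x f_{k-1}\partial_x\cdots f_{j+1}\,\big(\partial_{x}D_{f_{l},1}^{(j)}\big).$$
Since $\partial_{x}D_{f_{l},1}^{(j)}u_j=0$, the outer operators are applied to the zero function and return zero. This gives the remark preceding Proposition \ref{pr:Proposition_2}, that $u_j$ solves \eqref{eq:Equation_28}.

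\textbf{Step 2: $u_j$ is an antiderivative of $g_{2:j}/f_1$.} By Proposition \ref{pr:Proposition_2},
$$u_j(x,t)=f(t)\,D_{x_0}^{-1}\!\left(\frac{g_{2:j}}{f_1}\right)(x).$$
So, for fixed $t$, $u_j$ is $f(t)$ times an antiderivative of $h_j:=g_{2:j}/f_1$. Elementarity with respect to $x$ is unaffected by the factor $f(t)$, since it is constant in $x$ and can be treated as an element of the constant field.

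\textbf{Step 3: apply Liouville.} Assume $h_j$ satisfies \eqref{eq:Equations_26}, that is,
$$h_j=\sum_i c_i\frac{h_i'}{h_i}+s'$$
with $h_i$ and $s$ elementary. Then
$$G(x):=\sum_i c_i\log h_i(x)+s(x)$$
is obtained from $\mathbb{C}(x)$ by adjoining $s$ and the $h_i$ (a finite elementary tower) and then the logarithms $\log h_i$ (logarithmic extensions). Hence $G$ is elementary in the sense of Definition \ref{df:Definition_2}, and $G'=h_j$.

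By the fundamental theorem of calculus, $D_{x_0}^{-1}h_j=G-G(x_0)$ on the interval $I$ of Remark \ref{os:Observation_2}. Therefore $u_j=f(t)\big(G(x)-G(x_0)\big)$ is elementary. For $u_1$, the same argument with $h_1=1/f_1$, using $u_1=f(t)R_{f_1}$ from \eqref{eq:Equation_27}, gives the last sentence of the statement. The boundary case $j=2$ is included, since there $g_{2:2}$ reduces to $R_{f_2}$ in the computation of $u_2$.

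\textbf{Main obstacle.} The delicate point is the direction of Liouville's theorem. As stated in \eqref{eq:Equations_26}, the condition is exactly the sufficient direction needed here, and that direction is elementary: it is the construction of $G$ above. The genuinely nontrivial converse direction is not required. One must also make sure the constant $c_i$ and the value $G(x_0)$ stay in the constant field, and that $\log h_i$ is well defined on $I$. I would handle the latter by shrinking $I$ to an interval where no $h_i$ vanishes, which the standing convention of Remark \ref{os:Observation_2} already permits.
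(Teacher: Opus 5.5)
Your proposal is correct and follows the same route as the paper. The paper gives no separate proof: it presents Proposition~\ref{pr:Proposition_4} as a direct consequence of the representation $u_j=f(t)\int_{x_0}^{x}g_{2:j}(s)/f_1(s)\,ds$ from Proposition~\ref{pr:Proposition_2}, combined with the sufficient direction of Liouville's criterion \eqref{eq:Equations_26}. Your Steps 1 and 3 make explicit what the paper leaves implicit, namely that $u_j$ solves \eqref{eq:Equation_28} for $j\le k-1$ and that $\sum_i c_i\log h_i+s$ is an elementary antiderivative.
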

 
 \quad 
 \begin{proposition}\label{pr:Proposition_5}
 	The equation \eqref{eq:Equation_28} 
 	$$\partial_{x}D_{f_{l},1}^{(k-1)}u=0$$ 
 	has elementary solutions of the type $u_j(x,t)$ for $2 \leq j \leq k-1$ if $g_{2:j}(x)$ and $\frac{1}{f_1(x)}$ check the criterion \eqref{cor:Corollary_1}.
     In particular, $u_1(x,t)$ is an elementary solution if $1$ constant function and $\frac{1}{f_1(x)}$ check the criterion \eqref{cor:Corollary_1}.
 \end{proposition}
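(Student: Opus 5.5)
The plan is to assemble Proposition \ref{pr:Proposition_5} from three earlier results: the explicit form of the solutions in Proposition \ref{pr:Proposition_2}, the fact that each $u_j$ solves \eqref{eq:Equation_28} for $j\leq k-1$, and Proposition \ref{pr:Proposition_3} (through Corollary \ref{cor:Corollary_1}).

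First fix $j$ with $2\le j\le k-1$. Because $\partial_{x}D_{f_{l},1}^{(k-1)}u=\partial_x f_{k-1}\partial_x\cdots\partial_x f_{j+1}\,\partial_{x}D_{f_{l},1}^{(j)}u$, any $u$ with $\partial_{x}D_{f_{l},1}^{(j)}u=0$ also annihilates the full operator. So $u_j$ as built in Proposition \ref{pr:Proposition_2} is a solution of \eqref{eq:Equation_28}, and it has the form \eqref{eq:Equation_29}:
\[
u_j(x,t)=f(t)\int_{x_0}^{x}g_{2:j}(s)\frac{1}{f_1(s)}\,ds .
\]
The $t$-dependence enters only through the factor $f(t)$. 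Elementarity with respect to $x$ therefore reduces to elementarity of $F(x)=\int_{x_0}^x u(s)v(s)\,ds$ with $u=g_{2:j}$ and $v=1/f_1$.

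Next assume the hypothesis of Corollary \ref{cor:Corollary_1}: there are elementary $\nu$ and $S$ with $\nu^{(k+1)}=1/f_1$ and $S'=g_{2:j}^{(k+1)}\nu$. Set $V=\nu^{(k)}$, so that $V'=v$ and $D_{x_0}^{-j}V$ agrees with $\nu^{(k-j)}$ up to polynomial terms that come from the base point $x_0$. Apply the iterated integration by parts \eqref{eq:Equation_30}, or equivalently \eqref{eq:Equation_31}. This writes $F$ as $\sum_{j'=0}^{k}(-1)^{j'}g_{2:j}^{(j')}\nu^{(k-j')}$ plus polynomial corrections, plus $\pm S(x)$ (up to a constant). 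The terms $\nu^{(k-j')}$ and $S$ are elementary because derivatives of elementary functions are elementary and $\mathbb{C}(x)$ is contained in every field of the chain. The derivatives $g_{2:j}^{(j')}$ are elementary provided $g_{2:j}$ is; I read that as implicit in the statement's requirement that $g_{2:j}$ ``check the criterion''. Since a finite sum of products of elementary functions is elementary, Proposition \ref{pr:Proposition_3} gives that $F$, and hence $u_j$, is elementary.

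For $u_1=f(t)R_{f_1}(x)=f(t)\int_{x_0}^x 1\cdot\frac{1}{f_1(s)}\,ds$, take $u\equiv1$. Then $u^{(k+1)}=0$, so one may take $S=0$ (the first case of \eqref{eqs: Equations_5}). The same computation gives $R_{f_1}=\nu^{(k)}$ up to constants, which is elementary.

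The main obstacle is bookkeeping, not substance. The base point $x_0$ makes $D_{x_0}^{-j}V$ differ from $\nu^{(k-j)}$ by polynomials in $x$, and these have to be tracked to confirm they are elementary. The implicit assumption that $g_{2:j}$ itself is elementary must also be stated explicitly. I would handle the first point by noting that all such discrepancies lie in $\mathbb{C}[x]\subset\mathbb{C}(x)$.
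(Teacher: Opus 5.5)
Your proposal is correct and follows the paper's own route. The paper gives no separate proof: it presents this proposition as a direct consequence of Proposition~\ref{pr:Proposition_2}, of the remark that $u_j$ solves \eqref{eq:Equation_28} for $j\le k-1$, and of Corollary~\ref{cor:Corollary_1}. You are right to state explicitly that $g_{2:j}$ must itself be elementary: the corollary's hypotheses only constrain $g_{2:j}^{(k+1)}\nu$, yet the one-line proof of Proposition~\ref{pr:Proposition_3} tacitly uses the elementarity of $g_{2:j}$.

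Your claim that all $x_0$-discrepancies lie in $\mathbb{C}[x]$ is slightly off. In the remainder integral they produce $\int_{x_0}^{x}g_{2:j}^{(k+1)}(s)q(s)\,ds$ with $q$ a polynomial, which is elementary but not a polynomial. You can avoid this bookkeeping entirely by integrating by parts directly against $\nu$, so that only constants come from $x_0$: $\int_{x_0}^{x}g_{2:j}\,\nu^{(k+1)}\,ds=\bigl[\sum_{i=0}^{k}(-1)^{i}g_{2:j}^{(i)}\nu^{(k-i)}\bigr]_{x_0}^{x}+(-1)^{k+1}\bigl(S(x)-S(x_0)\bigr)$.
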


Let us now suppose that the ODE $\partial_{x}D_{f_{l},1}^{(k-1)}u=0$ admits $\{u_{j_{s}}\}$ elementary solutions; then for the linearity of ODE we get $u(x,t)=\sum_{s=0}^{t'}A_{s}u_{j_{s}}$ elementary solutions as the coefficients $A_{s}$ vary in $\mathbb{C}$, i.e, $u \in W=<u_{j_{s}}>$ with $W$ vectorial space. Let's see how to apply this to some examples of non-linear PDEs in space-time.

\begin{example}\label{ex:Example_2}
	Given PDE's non-linear 
	      \begin{equation}\label{eq:Equation_32}
	      	\partial_t t \partial_t u=(u^{3}+2u-1)\partial_{x} x \partial_{x} \frac{\ln x}{\cosh x} \partial_{x} \tanh x \partial_{x} \frac{\sinh x}{ e^x}\partial_{x} \frac{e^{\frac{x}{2}}}{\cos(\frac{\sqrt{3}}{2}x)}u -u .   	
	      \end{equation}
	\quad
	Using the invariant space method, this space $W$ cancels out the nested derivative
	
	$$\partial_{x} x \partial_{x} \frac{\ln x}{\cosh x} \partial_{x} \tanh x \partial_{x} \frac{\sinh x}{ e^x}\partial_{x} \frac{e^{\frac{x}{2}}}{\cos(\frac{\sqrt{3}}{2}x)}u$$
	
	in this case, the functions that characterize the nested derivative are
	
	$f_1=\frac{e^{\frac{x}{2}}}{\cos(\frac{\sqrt{3}}{2}x)}$, $f_2= \frac{\sinh x}{ e^x}$, $f_3=\tanh x$, $f_4=\frac{\ln x}{\cosh x}$, $f_5= x$. 
	\quad
	
	From this it can be deduced that $R_{f_5}=\ln x $, $g_{2:5}=e^{x}$, $g_{3:5}=\sinh x$, $g_{4:5}=\sinh x$. The function $\frac{1}{f_1(x)}=e^{-\frac{x}{2}}\cos(\frac{\sqrt{3}}{2}x)$ is a sort of generalization of $\cos(x)$ associated with primitve roots of order $3$, where it can be seen that $$\bigg(e^{-\frac{x}{2}}\cos \bigg(\frac{\sqrt{3}}{2}x\bigg)\bigg)^{(k)}=e^{-\frac{x}{2}}\cos\bigg( \frac{2k \pi}{3}+\frac{\sqrt{3}}{2}x\bigg).$$
	Thanks to this fact, we get that $u_1(x,t)$ is an elementary solution, since $\bigg(e^{-\frac{x}{2}}\cos \bigg(\frac{4 \pi}{3}+\frac{\sqrt{3}}{2}x\bigg)\bigg)^{'}=e^{-\frac{x}{2}}\cos \bigg(\frac{\sqrt{3}}{2}x\bigg)$(To find out why this is the case, please refer to the section \ref{se:Section_7} of the appendix), so 
	$$u_1(x,t)=C_0(t)\bigg(e^{-\frac{x}{2}}\cos \bigg(\frac{4 \pi}{3}+\frac{\sqrt{3}}{2}x\bigg) +c \bigg),$$
\quad
where $C_0(t)= \sum_{k=0}^\infty \frac{t^k}{k!^2}$ is Bessel type-function such that 	$\partial_t t \partial_t C_0(t)=-C_0(t)$(see \cite{ricci2020laguerre} and \cite{garra2023note} ), and $c$ is a constant that depends of initial conditions of PDE.
For \eqref{cor:Corollary_1} we get that exsists $\nu(x)=e^{-\frac{x}{2}}\cos(\frac{\sqrt{3}}{2}x)$ such that $\nu^{(2+1)}(x)=\frac{1}{f_1(x)}$ and $S(x)=0$, since $\frac{\nu^{(2+1)}(x)}{\nu(x)}=1=\frac{g_{2:5}^{(2+1)}(x)}{g_{2:5}(x)}=\frac{e^{x}}{e^{x}}$, so $u_5(x,t)$ is another elementary solution that verifies \eqref{eq:Equation_31} and \eqref{eqs: Equations_5} for $k=2$, $A=\frac{1}{2}$ and $B=0$

$$u_5(x,t)=\frac{C_0(t)}{2}\bigg(e^{\frac{x}{2}}\bigg(\cos \bigg(\frac{4 \pi}{3}+\frac{\sqrt{3}}{2}x\bigg)-\cos \bigg(\frac{2 \pi}{3}+\frac{\sqrt{3}}{2}x\bigg) + \cos \bigg(\frac{\sqrt{3}}{2}x\bigg)   \bigg)+ c' \bigg)$$.

So we get a family of possible elementary solutions $$u=C_0(t)\bigg(A'\bigg(e^{-\frac{x}{2}}\cos \bigg(\frac{4 \pi}{3}+\frac{\sqrt{3}}{2}x\bigg) \bigg)+ B'\bigg(e^{\frac{x}{2}}\bigg(\cos \bigg(\frac{4 \pi}{3}+\frac{\sqrt{3}}{2}x\bigg)-\cos \bigg(\frac{2 \pi}{3}+\frac{\sqrt{3}}{2}x\bigg) + \cos \bigg(\frac{\sqrt{3}}{2}x\bigg)   \bigg) \bigg) + C'\Bigg)$$
\quad
where $A'$, $B'$, $C'$ are constants and $u=C_0(t)g$ where $g(x) \in W=<g_1, g_5, 1>$ where $u_1=C_0(t)g_1$, $u_5=C_0(t)g_2$ generators of elementary functions of invariant space $W$ for \eqref{eq:Equation_32}.
	
\end{example}
 \subsection{ The non-homogeneous case $ \partial_{x}D_{f_{l},1}^{(k-1)}u=g(x,t)$}\label{sse:Subsection5_1}

\quad

Given the nested derivative equations non-homogeneous 
\begin{equation}\label{eq:Equation_33}
	\partial_{x}D_{f_{l},1}^{(k-1)}u=g(x,t).	
\end{equation}

We want to determine a particular solutions of \eqref{eq:Equation_33}, which we shall call $\bar{u}_{k-1}$ with the analog method \eqref{eq:Equation_28}. Let us begin noting that, once particular solutions $\bar{u}_{k-1}$ have been obtained, the possible solutions include the linear combination of the  $\bar{u}_{k-1}$ and homogeneous solutions by the linearity of the nested derivative operator, i.e.,

$$u(x,t)=\sum_{j=0}^{k-1}A_j u_j(x,t) +B\bar{u}_{k-1}(x,t).$$

\quad
Using the following proposition, we shall see how to determine $\bar{u}_{k-1}$ .

\begin{proposition}\label{pr:Proposition_6}
	The equation \eqref{eq:Equation_33} 
	$$	\partial_{x}D_{f_{l},1}^{(k-1)}u=g(x,t)	$$
	
	admits a particular solutions 
	
	$$\bar{u}_{k-1}(x,t)=\int_{x_0}^{x}\frac{g_{2:k-1}(s,t)}{f_1(s)}ds,$$
	
	if only if they exist the functions $g_{i:k-1}(x,t)$ for $ 1 \leq i \leq k-2$ and $G(x,t), R_{f_{k-1}}(x,t)$ with $\partial_{x} G(x,t)=g(x,t)$ such that 
	
	$$f_i(x)=\frac{g_{i+1:k-1}(x,t)}{\partial_{x}g_{i:k-1}(x,t)} \hspace{0.1 cm} 2 \leq i \leq k-3 $$
	$$f_{k-2}(x)=\frac{R_{f_{k-1}}}{\partial_{x}g_{k-2:k-1}(x,t)}$$
	$$ f_{k-1}(x)=\frac{G(x,t)}{\partial_{x}R_{f_{k-1}}}$$

\end{proposition}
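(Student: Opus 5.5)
The plan is to mimic the proof of Proposition \ref{pr:Proposition_2}. The nested equation \eqref{eq:Equation_33} is peeled one layer at a time. Each layer gives an equation of the form $\partial_x(\cdots)=(\text{known})$, which is integrated with $D_{x_0}^{-1}$, and then the next weight $f_i$ is divided out. The only new feature compared to the homogeneous case is the outermost step. There, instead of $f_{k-1}D^{(k-2)}_{f_l,1}u=f(t)$, the first integration gives $f_{k-1}D^{(k-2)}_{f_l,1}u=G(x,t)$ with $\partial_x G=g$, the integration constant in $t$ being absorbed into $G$. The ``only if'' direction is obtained by reading the same chain of identities backwards, exactly as in Remark \ref{os:Observation_2} and \eqref{eqs:Equations_4}.

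\emph{Sufficiency.} Suppose the functions $G,R_{f_{k-1}},g_{i:k-1}$ exist with the stated relations. Define $\bar u_{k-1}=\int_{x_0}^x g_{2:k-1}(s,t)/f_1(s)\,ds$. Then $D_{f_1}\bar u_{k-1}=f_1\partial_x\bar u_{k-1}=g_{2:k-1}$. Using $f_i\,\partial_x g_{i:k-1}=g_{i+1:k-1}$ for $2\le i\le k-3$, an induction on $i$ gives $D_{f_i}\cdots D_{f_1}\bar u_{k-1}=g_{i+1:k-1}$. The relation for $f_{k-2}$ then gives $D_{f_{k-2}}\cdots D_{f_1}\bar u_{k-1}=R_{f_{k-1}}$, and the relation for $f_{k-1}$ gives $D_{f_{k-1}}\cdots D_{f_1}\bar u_{k-1}=G$. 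A final $\partial_x$ yields $g$. This is a direct computation, and I will organize it as an induction on the number of layers, as in the proof of Proposition \ref{pr:Proposition_2}.

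\emph{Necessity.} Suppose $\bar u_{k-1}$ of the stated form solves \eqref{eq:Equation_33}. Define, from the inside out,
\[
g_{i+1:k-1}:=D_{f_i}\cdots D_{f_1}\bar u_{k-1},\qquad R_{f_{k-1}}:=D_{f_{k-2}}\cdots D_{f_1}\bar u_{k-1},\qquad G:=D_{f_{k-1}}\cdots D_{f_1}\bar u_{k-1}.
\]
Then $\partial_x G=g$ by the equation. Each relation $f_i=g_{i+1:k-1}/\partial_x g_{i:k-1}$ is just the defining identity $g_{i+1:k-1}=f_i\partial_x g_{i:k-1}$ solved for $f_i$, with the same reasoning for $f_{k-2}$ and $f_{k-1}$. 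This also shows that $g_{i+1:k-1}=D^{-1}_{x_0}(g_{i+2:k-1}/f_{i+1})$ up to additive functions of $t$, which are absorbed by choosing $x_0$ and the constants appropriately.

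The main obstacle I expect is bookkeeping rather than analysis. Division by $\partial_x g_{i:k-1}$ requires these derivatives to be nonvanishing on the chosen interval, and I will assume this, in the spirit of Remark \ref{os:Observation_2}. The $t$-dependence must also be consistent: the quotients on the right must be independent of $t$, since the $f_i$ depend only on $x$. I will handle the indexing mismatch between the layers $k-3$, $k-2$ and $k-1$ by carrying the induction of Proposition \ref{pr:Proposition_2} with $R_{f_{k-1}}$ playing the role of $g_{k-1:k-1}$ and $G$ that of $g_{k:k-1}$.
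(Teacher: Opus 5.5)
Your proposal is correct and is essentially the paper's proof. Your sufficiency step is the paper's ``inverse implication'' computation: it unwinds $f_1\partial_x\bar u_{k-1}=g_{2:k-1}$, $f_i\partial_x g_{i:k-1}=g_{i+1:k-1}$, $f_{k-2}\partial_x g_{k-2:k-1}=R_{f_{k-1}}$, $f_{k-1}\partial_x R_{f_{k-1}}=G$ and $\partial_x G=g$.

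The only difference is in necessity. The paper integrates from the outermost layer inward by induction on $k$ (base case $k=5$), obtaining $G$, $R_{f_{k-1}}$ and the $g_{i:k-1}$ as iterated antiderivatives. You instead define them directly as $D_{f_i}\cdots D_{f_1}\bar u_{k-1}$, which gives the same identities with no induction and no integration constants. Your remark about absorbing additive functions of $t$ via the choice of $x_0$ is therefore unnecessary, and your nonvanishing caveat on $\partial_x g_{i:k-1}$ is an assumption the paper leaves implicit.
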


\begin{proof}
Let's prove the first implication by induction of degree of nested derivation.
We begin with the base of induction $k=5$.
\quad

$$\partial_{x}f_4 \partial_{x} f_3 \partial_x f_2 \partial_x f_1 \partial_x u=g(x,t)$$

$$ f_4\partial_{x} f_3 \partial_x f_2 \partial_x f_1 \partial_x u=G(x,t)$$

 with $G(x,t)= \int_{x_0}^{x}g(s,t)ds$ for an opportune conditional condition  $x_0$ of \eqref{eq:Equation_33}. 
$$\partial_{x} f_3 \partial_x f_2 \partial_x f_1 \partial_x u=\frac{G(x,t)}{f_4(x)}$$

If we set $R_{f_{4} }(x,t)=\int_{x_0}^{x}\frac{G(s,t)}{f_4(s)}ds$ .

$$f_3 \partial_x f_2 \partial_x f_1 \partial_x u=R_{f_{4} }$$

$$\partial_x f_2 \partial_x f_1 \partial_x u=\frac{R_{f_{4} }}{f_3} \rightarrow  f_2 \partial_x f_1 \partial_x u=\int_{x_0}^{x}\frac{R_{f_{4} }}{f_3}ds=g_{3:4} \rightarrow \partial_x f_1 \partial_x u=\frac{g_{3:4}}{f_2}.$$

$$ f_1 \partial_x u=g_{2:4} \rightarrow \partial_x u=\frac{g_{2:4}}{f_1} \rightarrow u=\int_{x_0}^{x}\frac{g_{2:4}(s,t)}{f_1(s)}ds,$$

with $g_{2:4}=\int_{x_0}^{x}\frac{g_{3:4}(s,t)}{f_2(s)}ds$.

Now let's start with the inductive hypothesis for $k=l-1$.

$$\partial_{x}f_{l}\partial_{x}...\partial_{i}f_{i}\partial_{x}...\partial_{x}u=g(x,t) \rightarrow \partial_{x}f_{l-1}...\partial_{i}f_{i}\partial_{x}...\partial_{x}u=\int_{x_0}^{x}\frac{G(s,t)}{f_l(s)}ds$$. If we set $\tilde{g}(x,t)=\int_{x_0}^{x}\frac{G(s,t)}{f_l(s)}ds$ the inductive hypothesis holds so exsist $\tilde{g}_{i:l-1}=\int_{x_0}^{x}\frac{\tilde{g}_{i+1:l-1}(s,t)}{f_i(s)}ds$ for $2 \leq i \leq l-3$ and $\tilde{g}_{l-2:l-1}=\int_{x_0}^{x}\frac{\tilde{R}_{f_{l-1}}(s,t)}{f_{l-2}(s)}ds $ with $\tilde{R}_{f_{l-1}}=\int_{x_0}^{x}\frac{\tilde{G}(s,t)}{f_{l-1}(s)}ds$ and $\tilde{G}(s,t)=\int_{x_0}^{x}\tilde{g}(s,t)ds$ such that the solution is
$$ \tilde{u}(x,t)=\int_{x_0}^{x}\frac{\tilde{g}_{2:l-1}(s,t)}{f_1(s)}ds.$$

If we assume that $g_{i:l}=\tilde{g}_{i:l-1}$ for $2 \leq i \leq l-2$  and $g_{l-1:l}=\tilde{R}_{f_{l-1}}$ and $R_{f_{l}}(x,t)=\tilde{g}(x,t)$ we get the solution for $k=l$ i.e

$$u(x,t)=\tilde{u}(x,t)=\int_{x_0}^{x}\frac{\tilde{g}_{2:l-1}(s,t)}{f_1(s)}ds=\int_{x_0}^{x}\frac{g_{2:l}(s,t)}{f_1(s)}ds.$$

The demonstration of inverse implication consists of carrying out the following calculation

$$\partial_{x}\frac{G(x,t)}{\partial_{x}R_{f_{k-1}}}\partial_{x}\frac{R_{f_{k-1}}}{\partial_{x}g_{k-2:k-1}(x,t)} \partial_{x}\frac{g_{k-2:k-1}(x,t)}{\partial_{x}g_{k-3:k-1}(x,t)}\partial_{x}...\partial_{x}\frac{g_{i+1:k-1}(x,t)}{\partial_{x}g_{i:k-1}(x,t)}\partial_{x}...\partial_{x}\frac{g_{3:k-1}(x,t)}{\partial_{x}g_{2:k-1}(x,t)}\partial_{x}f_1(x)\partial_{x}\int_{x_0}^{x}\frac{g_{2:k-1}(s,t)}{f_1(s)}ds.$$

$$\partial_{x}\frac{G(x,t)}{\partial_{x}R_{f_{k-1}}}\partial_{x}......\partial_{x}\frac{g_{i+1:k-1}(x,t)}{\partial_{x}g_{i:k-1}(x,t)}\partial_{x}......\partial_{x}\frac{g_{3:k-1}(x,t)}{\partial_{x}g_{2:k-1}(x,t)}\partial_{x}\bigg(f_1(x)\frac{g_{2:k-1}(x,t)}{f_1(x)}  \bigg)$$

\quad

$$\partial_{x}\frac{G(x,t)}{\partial_{x}R_{f_{k-1}}}\partial_{x}......\partial_{x}\frac{g_{i+1:k-1}(x,t)}{\partial_{x}g_{i:k-1}(x,t)}\partial_{x}......\partial_{x}\frac{g_{3:k-1}(x,t)}{\partial_{x}g_{2:k-1}(x,t)}\partial_{x}g_{2:k-1}(x,t)$$

After repeating this calculation for $i$-steps

$$\partial_{x}\frac{G(x,t)}{\partial_{x}R_{f_{k-1}}}\partial_{x}......\partial_{x}\frac{g_{i+1:k-1}(x,t)}{\partial_{x}g_{i:k-1}(x,t)}\partial_{x}g_{i:k-1}(x,t),$$

we arrive at $(k-2)$-step

$$\partial_{x}\frac{G(x,t)}{\partial_{x}R_{f_{k-1}}}\partial_{x}\frac{R_{f_{k-1}}}{\partial_{x}g_{k-2:k-1}(x,t)}\partial_{x}g_{k-2:k-1}(x,t)=\partial_{x}\frac{G(x,t)}{\partial_{x}R_{f_{k-1}}}\partial_{x}R_{f_{k-1}}=\partial_{x}G(x,t)=g(x,t).$$
Thus, we have shown the converse implication, and we have completed the demonstration.

\end{proof}

\quad
Given the form of the particular solution $\bar{u}_{k-1}$ of \eqref{eq:Equation_33} by \ref{pr:Proposition_6}, also in this case we get two criteria for determining if $\bar{u}_{k-1}$ is elementary in a similar way to the homogeneous case.

\quad

\begin{proposition}\label{pr:Proposition_7}
	The equation \eqref{eq:Equation_33} 
	$$\partial_{x}D_{f_{l},1}^{(k-1)}u=g(x,t)$$ 
	has particular elementary solutions of the type $\bar{u}_{k-1}(x,t)$  if $g_{2:k-1}(x,t)\frac{1}{f_1(x)}$( see \ref{pr:Proposition_6}) proves Liouville's theorem (relative to the coordinate $x$)
	\eqref{eq:Equations_26}. 
	
\end{proposition}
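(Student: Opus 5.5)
The plan is to combine the explicit form of the particular solution given by Proposition \ref{pr:Proposition_6} with Liouville's criterion \eqref{eq:Equations_26}, treating $t$ as a parameter. This mirrors how Proposition \ref{pr:Proposition_4} was obtained from Proposition \ref{pr:Proposition_2} in the homogeneous case.

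First, by Proposition \ref{pr:Proposition_6}, equation \eqref{eq:Equation_33} admits the particular solution
$$\bar{u}_{k-1}(x,t)=\int_{x_0}^{x}\frac{g_{2:k-1}(s,t)}{f_1(s)}ds.$$
This requires the resolving functions $g_{i:k-1}$, $R_{f_{k-1}}$ and $G$ to be constructed by the successive integrations in the proof of that proposition. So $\bar{u}_{k-1}$ is, for each fixed $t$, a primitive with respect to $x$ of the integrand
$$h_t(x):=\frac{g_{2:k-1}(x,t)}{f_1(x)}.$$
Equivalently, $\partial_x \bar{u}_{k-1}=h_t$, and $\bar{u}_{k-1}(x_0,t)=0$.

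Second, fix $t$ in the relevant open set and work in the differential field of functions of $x$ with $d=\partial_x$. The dependence on $t$ enters only through coefficients that are constant with respect to $\partial_x$, so the constant field is enlarged to contain them. Assume $h_t$ satisfies \eqref{eq:Equations_26}, that is,
$$h_t(x)=\sum_{i=1}^{n}c_i\frac{(h_i(x,t))'}{h_i(x,t)}+(s(x,t))'$$
with $h_i(\cdot,t)$ and $s(\cdot,t)$ elementary in $x$. Then the candidate
$$\sum_i c_i\ln h_i(x,t)+s(x,t)$$
is elementary in $x$: each $\ln h_i$ is a logarithmic extension over a field containing $h_i$, obtained by adjoining to the tower of Definition \ref{df:Definition_2}. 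Its $x$-derivative equals $h_t$, so it differs from $\bar{u}_{k-1}$ by a function of $t$ alone. That difference is fixed by evaluating at $x_0$ and is constant in $x$. Hence $\bar{u}_{k-1}(\cdot,t)$ is elementary with respect to $x$.

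The main obstacle is the bookkeeping with the parameter $t$. Definition \ref{df:Definition_2} starts from $\mathbb{C}(x)$ with complex constants, while here the coefficients $c_i$ and the additive constant may depend on $t$. To handle this, I would either restrict to each fixed $t$ (which is what "relative to the coordinate $x$" in the statement suggests) or replace $\mathbb{C}$ by the field of functions of $t$, which are $\partial_x$-constants. Once that convention is set, the argument reduces verbatim to the homogeneous case of Proposition \ref{pr:Proposition_4}.
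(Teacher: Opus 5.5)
Your proposal is correct and takes essentially the same route as the paper. The paper gives no separate proof: it only notes that Proposition~\ref{pr:Proposition_6} writes $\bar{u}_{k-1}$ as an $x$-primitive of $g_{2:k-1}/f_1$, and then applies Liouville's criterion \eqref{eq:Equations_26} as in the homogeneous case of Proposition~\ref{pr:Proposition_4}. Your explicit antiderivative $\sum_i c_i\ln h_i+s$ (the easy direction of Liouville's theorem, which is the one actually needed) and your fixed-$t$ treatment of the constants make explicit what the paper leaves implicit.
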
	

\quad

\begin{proposition}\label{pr:Proposition_8}
	The equation \eqref{eq:Equation_33} 
	$$\partial_{x}D_{f_{l},1}^{(k-1)}u=g(x,t)$$ 
	has particular elementary solutions of the type $\bar{u}_{k-1}(x,t)$  ( see \ref{pr:Proposition_6})if $g_{2:j}(x,t)$ and $\frac{1}{f_1(x)}$ check the criterion \eqref{cor:Corollary_1}.
\end{proposition}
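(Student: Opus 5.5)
The plan is to reduce Proposition \ref{pr:Proposition_8} to Corollary \ref{cor:Corollary_1}, the non-homogeneous analogue of Proposition \ref{pr:Proposition_5}, with $t$ treated as a parameter throughout.

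\emph{Step 1: representation of the particular solution.} By Proposition \ref{pr:Proposition_6},
$$\bar{u}_{k-1}(x,t)=\int_{x_0}^{x}\frac{g_{2:k-1}(s,t)}{f_1(s)}\,ds .$$
This is an integral function $F(x)=\int_{x_0}^{x}u(s)v(s)\,ds$ of exactly the type in Proposition \ref{pr:Proposition_3}. I set $u(s)=g_{2:k-1}(s,t)$ with $t$ frozen, and $v(s)=\frac{1}{f_1(s)}$.

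\emph{Step 2: invoke the criterion.} By hypothesis, $g_{2:k-1}$ and $\frac{1}{f_1}$ check the criterion of Corollary \ref{cor:Corollary_1}. So there are an integer $k'\ge 1$ and functions $\nu(x)$, $S(x,t)$, elementary in $x$, with
$$\nu^{(k'+1)}=\frac{1}{f_1},\qquad \partial_x S=\partial_x^{(k'+1)}g_{2:k-1}\,\nu .$$
I integrate by parts $k'$ times using \eqref{eq:Equation_30}, equivalently \eqref{eq:Equation_31}. Since $\partial_x^{j}D_{x_0}^{-(k'+1)}$ applied to $\frac{1}{f_1}$ is $\nu^{(k'+1-j)}$, up to additive polynomial terms fixed by $x_0$, this gives
$$\bar{u}_{k-1}=\sum_{j=0}^{k'}(-1)^{j}\,\partial_x^{(j)}g_{2:k-1}\,\nu^{(k'-j)}-(-1)^{k'}S(x,t)+c(t).$$

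\emph{Step 3: conclude elementarity.} Every term on the right is elementary in $x$:
\begin{itemize}
\item $\nu$ is elementary, hence so are its derivatives;
\item $S$ is elementary;
\item the derivatives $\partial_x^{(j)}g_{2:k-1}$ are elementary, since differential fields are closed under $\partial_x$, provided $g_{2:k-1}$ itself is elementary in $x$, which the criterion presupposes.
\end{itemize}
Sums and products stay in the field, so $\bar{u}_{k-1}(\cdot,t)$ lies in a finite elementary extension of $\mathbb{C}(x)$, as required by Definition \ref{df:Definition_2}.

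\emph{Main obstacle.} The delicate point is the parameter $t$: elementarity is taken relative to $x$ only, so the differential field must have $t$-dependent constants. I would handle this by replacing $\mathbb{C}$ with the field of functions of $t$ (constants for $\partial_x$) and checking that Proposition \ref{pr:Proposition_3} holds verbatim over it. This works because \eqref{eq:Equation_30} uses only $\partial_x$ and $\int_{x_0}^{x}$.

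A second subtlety is that $\nu$ need not coincide with $D_{x_0}^{-(k'+1)}\frac{1}{f_1}$. The two differ by a polynomial in $x$ of degree at most $k'$, which is elementary and does not affect the conclusion.
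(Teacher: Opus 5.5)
Your proposal is correct and follows the route the paper intends (the paper gives no separate argument beyond ``in a similar way to the homogeneous case''): write $\bar{u}_{k-1}$ via Proposition \ref{pr:Proposition_6} as $\int_{x_0}^{x} g_{2:k-1}(s,t)/f_1(s)\,ds$, then apply the integration-by-parts criterion of Proposition \ref{pr:Proposition_3} and Corollary \ref{cor:Corollary_1} in $x$, with $t$ treated as a parameter. Your explicit proviso that $g_{2:k-1}$ be elementary in $x$ is genuinely needed and is only tacit in the paper's proof of Proposition \ref{pr:Proposition_3}: for instance, $f_1=1$ and $g_{2:k-1}(x)=\int_0^x e^{-s^2}ds$ satisfy the criterion with $\nu=x^2/2$ and $S=\frac{1}{2}(x^2+1)e^{-x^2}$, yet $\bar{u}_{k-1}$ is not elementary.
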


Now we see examples of non-linear PDEs in space-time that are solved by equation \eqref{eq:Equation_33}.

\begin{example}\label{ex:Example_3}
Given the following non-linear PDE
\begin{equation}\label{eq:Equation_34}
\partial_t t \partial_t u=\frac{u}{\ln(t)e^{-x}(\tanh^{2} x+\tanh x-2)}	\partial_{x} \frac{1}{\cosh x}\partial_{x} \frac{\sinh x}{e^{x}}\partial_{x}\frac{e^{\frac{x}{2}}}{\cos\big( \frac{\sqrt{3}}{2}x \big)}\partial_{x}u -u.
\end{equation}
	
We want to find $u$ such that the first and the second member of \eqref{eq:Equation_34} are cancelled. In particular, we note that the second member is cancelled if $u$ is a solution of

$$	\partial_{x} \frac{1}{\cosh x}\partial_{x} \frac{\sinh x}{e^{x}}\partial_{x}\frac{e^{\frac{x}{2}}}{\cos\big( \frac{\sqrt{3}}{2}x \big)}\partial_{x}u=\ln(t)e^{-x}(\tanh^{2} x+\tanh x-2)$$

\quad
So using \ref{pr:Proposition_6}, we can determine a particular solution $\bar{u}_3(x,t)$. It can be derived that, supposing that $t>0$,

$$g_{2:3}(x,t)=\ln(t)x, \hspace{0.1cm} R_{f_{3}}(x,t)=\ln(t)\frac{\sinh x}{e^x}, \hspace{0.1cm} G(x,t)=\ln(t)e^{-x}(1-\tanh x).$$

So the particular solution is

$$\bar{u}_3(x,t)= \ln(t)\int_{x_0}^{x}se^{-\frac{s}{2}}\cos\bigg( \frac{\sqrt{3}}{2}s \bigg)ds,$$

which is elementary by \ref{cor:Corollary_1} because it exsits $\nu(x)=e^{-\frac{x}{2}}\cos\big(\frac{2\pi}{3}+ \frac{\sqrt{3}}{2}x \big)$ such that $\nu(x)^{(1+1)}=e^{-\frac{x}{2}}\cos\big( \frac{\sqrt{3}}{2}x \big)$ and $S(x)=0$ thus the solution $\bar{u}_3(x,t)$ satisfies equation \eqref{eq:Equation_31} for $A=1$.

$$\bar{u}_3(x,t)=\ln(t)\bigg[ xe^{-\frac{x}{2}}\cos\bigg(\frac{4\pi}{3} +\frac{\sqrt{3}}{2}x  \bigg)- e^{-\frac{x}{2}}\cos\bigg(\frac{2\pi}{3} +\frac{\sqrt{3}}{2}x  \bigg) +C_3 \bigg]$$

As we mentioned at the start of this subsection \ref{sse:Subsection5_1}, we can add up to particular solution $\bar{u}_3(x,t)$ the solutions, in this case elementary, of the homogeneus version of nested derivative

$$ \partial_{x} \frac{1}{\cosh x}\partial_{x} \frac{\sinh x}{e^{x}}\partial_{x}\frac{e^{\frac{x}{2}}}{\cos\big( \frac{\sqrt{3}}{2}x \big)}\partial_{x}u=0.$$
 
In particular, for the previous example, it's immediate finding an elementary solution $u_1(x,t)$ of

$$\partial_{x}\frac{e^{\frac{x}{2}}}{\cos\big( \frac{\sqrt{3}}{2}x \big)}\partial_{x}u=0,$$

i.e

$$u_1(x,t)=h_1(t)\bigg[e^{-\frac{x}{2}}\cos\bigg( \frac{4\pi}{3} +\frac{\sqrt{3}}{2}x\bigg) +C_1\bigg],$$

\quad
where $C_2$ is an integral constant determined by initial condition $x_0$.

So, the possible solutions of \eqref{eq:Equation_34} are the functions

$$u(x,t)=A_1 h_1(t))\bigg[e^{-\frac{x}{2}}\cos\bigg( \frac{4\pi}{3} +\frac{\sqrt{3}}{2}x\bigg) \bigg]+A_2 h_2(t)+ B \ln(t)\bigg[ xe^{-\frac{x}{2}}\cos\bigg(\frac{4\pi}{3} +\frac{\sqrt{3}}{2}x  \bigg)- e^{-\frac{x}{2}}\cos\bigg(\frac{2\pi}{3} +\frac{\sqrt{3}}{2}x  \bigg)  \bigg],$$

with $A_1$, $A_2$, $B$ opportune constants. But, since we want to cancel the first member, necessarily $\partial_t t \partial_t h_1(t)=\partial_t t \partial_t h_2(t)=0$, then we can consider $h_1(t)=h_2(t)=\ln t$. So we have the following family of admissible solutions to \eqref{eq:Equation_34}.

$$u(x,t)= \ln(t)\biggl\{A_1\bigg[e^{-\frac{x}{2}}\cos\bigg( \frac{4\pi}{3} +\frac{\sqrt{3}}{2}x\bigg) \bigg]+A_2+ B \bigg[ xe^{-\frac{x}{2}}\cos\bigg(\frac{4\pi}{3} +\frac{\sqrt{3}}{2}x  \bigg)- e^{-\frac{x}{2}}\cos\bigg(\frac{2\pi}{3} +\frac{\sqrt{3}}{2}x  \bigg) \bigg]\biggr\}.$$	
\end{example}

\subsection{The homogeneous non-linear case $\partial_{x}D_{f_{l},1}^{(k-1)}P(u)=0$ }\label{sse:Subsection5_2}

\quad

In this subsection, we determine elementary solutions of non-linear nested derivation 

\begin{equation}\label{eq:Equation_35}
\partial_{x}D_{f_{l},1}^{(k-1)}P(u)=0,	
\end{equation}

with $P(u) \in \mathbb{C}[u]$ and $deg_u P(u)>1$.

The first step to find a solution is to set $P(u)=V$ and solve the associated linear equation of \eqref{eq:Equation_35}

$$\partial_{x}D_{f_{l},1}^{(k-1)}V=0.$$

If $V_{j}(x,t)=f(t)\int_{x_0}^{x}\frac{g_{2:j}(s)}{f_1 (s)}ds$ is a  solution of linear equation of \eqref{eq:Equation_35},
since $P(u(x,t))=V(x,t)$, we get that $P(u (x,t))=f(t)\int_{x_0}^{x}\frac{g_{2:j}(s)}{f_1 (s)}ds$.

Now we consider the equation $P(u)-d=0$ with $d$ a complex parameter, i.e., $d \in \mathbb{C}$. 
By the Fundamental Theorem of Algebra, we get $m$ functions $\alpha_{k'} ((c_i)_{i=0}^{l},d)$, if $P(u)=\sum_{i=0}^{l}c_i u^{i}$ with $c_i \in \mathbb{C}$ and $c_l \neq 0$, such that $P(\alpha_{k'} ((c_i)_{i=0}^{l},d))-d=0$ with $1 \leq k' \leq m$ and $m\leq l$.

Replacing $d$ with $V_{j}(x,t)$, we obtain a set of possible solutions of \eqref{eq:Equation_35} 

\begin{equation}\label{eq:Equation_36}
u_{j,k}(x,t)=\alpha_{k'} ((c_i)_{i=0}^{l},V_j(x,t)),	
\end{equation}

for $1 \leq k' \leq m$ with $m \leq l$ and $1 \leq j \leq k-1$ .  

The linearity of $\partial_{x}D_{f_{l},1}^{(k-1)}V=0.$ extends the set of solutions of \eqref{eq:Equation_36} in such way

\begin{equation}\label{eq:Equation_37}
u(x,t)=\alpha_{k'} \bigg((c_i)_{i=0}^{l},\sum_{j=1}^{k-1}A_{j}V_{j}(x,t)\bigg),	
\end{equation}
with $A_{j} \in \mathbb{C}$.

\quad

Among the solutions \eqref{eq:Equation_37}, we want to identify criteria for determining the elementary  ones. In the subsection \ref{sse:Subsection5_1}, we have found criteria for determining elementary solutions of $V_{j}$. 
We still need to establish conditions on the functions $\alpha_{k'}$ when $V(x,t)$ of the type \eqref{eq:Equation_37} is elementary.

\begin{proposition}\label{pr:Proposition_9}
The equation \eqref{eq:Equation_35} 

$$\partial_{x}D_{f_{l},1}^{(k-1)}P(u)=0,$$

has elementary solutions of the type \eqref{eq:Equation_37}, i.e.

$$u(x,t)=\alpha_{k'} \bigg((c_i)_{i=0}^{l},\sum_{s=1}^{k''}A_{j_{s}}V_{j_{s}}(x,t)\bigg),$$

with $k''\leq k-1$, if   $V_{j_{s}}(x,t)$ for $ 1\leq s \leq k''$, solutions of linear equation associated to \eqref{eq:Equation_35}  $\partial_{x}D_{f_{l},1}^{(k-1)}V=0$, satisfy the propositions \ref{pr:Proposition_4} and/or \ref{pr:Proposition_5} and the equation
$P(u)-d=0$ can be resolved by radicals. 
In particular, if $V_1(x,t)=f(t)\int_{x_0}^{x}\frac{1}{f_1 (s)}ds$ is elementary for an opportune $f(t)$ and $P(u)-d=0$ is resolved by radicals then

$$u_{1,k}(x,t)=\alpha_{k'} ((c_i)_{i=0}^{l},V_1 (x,t))$$

are an elementary solutions of  \eqref{eq:Equation_35}.	
\end{proposition}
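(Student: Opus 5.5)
The argument has three parts: (a) the composite is a genuine solution of \eqref{eq:Equation_35}; (b) the inner function $V=\sum_{s}A_{j_s}V_{j_s}$ is elementary in $x$; (c) composing with a radical expression keeps us inside the class of elementary functions of Definition \ref{df:Definition_2}.

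For (a), set $V(x,t)=\sum_{s=1}^{k''}A_{j_s}V_{j_s}(x,t)$. Each $V_{j_s}$ is of the form given in Proposition \ref{pr:Proposition_2} (with $j_s\le k-1$). As remarked after that proposition, each one solves $\partial_{x}D_{f_{l},1}^{(k-1)}V=0$. The operator is linear, so $V$ solves the same equation. By construction $u=\alpha_{k'}((c_i)_{i=0}^{l},V)$ satisfies $P(u)=V$ identically on the chosen interval. Therefore
$$\partial_{x}D_{f_{l},1}^{(k-1)}P(u)=\partial_{x}D_{f_{l},1}^{(k-1)}V=0.$$
To differentiate $u$ legitimately, I would restrict to an open interval on which $P'(u)\neq 0$, i.e. on which $V$ avoids the finitely many critical values of $P$. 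There the implicit function theorem makes the branch $\alpha_{k'}(\cdot,d)$ a smooth function of $d$, so $u$ has class $C^{k}$ in $x$.

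For (b), the hypothesis says each $V_{j_s}$ satisfies Proposition \ref{pr:Proposition_4} (Liouville's criterion \eqref{eq:Equations_26}) or Proposition \ref{pr:Proposition_5} (Corollary \ref{cor:Corollary_1}). Either way, the integral $\int_{x_0}^{x}g_{2:j_s}/f_1\,ds$ is elementary. So for fixed $t$ each $V_{j_s}(\cdot,t)$ lies in some field $K_s$ at the top of a finite tower of elementary extensions starting from $\mathbb{C}(x)$. Adjoining these towers one after another gives a single tower ending in a differential field $K$ that contains all the $V_{j_s}$. Since $K$ is a field and the $A_{j_s}$ and $f(t)$ are constants in $x$, it also contains the linear combination $V$.

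For (c), the hypothesis is that $P(u)-d=0$ is solvable by radicals over $\mathbb{C}(d)$. Then each root $\alpha_{k'}$ is obtained from $d$ and the $c_i$ by field operations and a finite number of root extractions $w\mapsto w^{1/n}$. Now substitute $d=V\in K$. Each radical $\beta=w^{1/n}$ with $w\in K'$ satisfies the polynomial $X^n-w$ over $K'$, so the step $K'\subset K'(\beta)$ is an algebraic extension: take an irreducible factor of $X^n-w$ having $\beta$ as a root. By Definition \ref{df:Definition_2} this step is elementary. The derivation extends uniquely to such algebraic extensions in characteristic zero, so each step is again a differential field. Following the radical tower therefore produces a finite elementary tower over $K$ containing $u$, and $u$ is elementary.

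The particular case with $V_1$ is just $k''=1$, $j_1=1$. Here $V_1=f(t)R_{f_1}$ is elementary by hypothesis, and steps (b) and (c) apply verbatim.

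The main obstacle is step (c): making precise that solvability by radicals of $P(u)-d$ over $\mathbb{C}(d)$ survives the specialization $d\mapsto V(x,t)$. In particular one must check that the chosen branch $\alpha_{k'}$ really is a consistent radical expression on the interval, where $V$ must avoid branch points and critical values. I would handle this by working on a small interval where all radicands are nonvanishing and using analytic branches. Note that the converse direction (irreducibility of the specialized polynomial) is not needed, since algebraic extensions are allowed in Definition \ref{df:Definition_2} anyway.
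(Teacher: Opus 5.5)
Your proposal is correct and follows the same route as the paper. The paper's proof notes that $V=\sum_{s}A_{j_s}V_{j_s}$ is elementary by Propositions \ref{pr:Proposition_4} and/or \ref{pr:Proposition_5}, that the roots $\alpha_{k'}$ are algebraic functions because $P(u)-d=0$ is solvable by radicals, and that their composition is therefore elementary; your parts (a)--(c) just fill in details the paper leaves implicit, with the solution check in (a) corresponding to the discussion leading to \eqref{eq:Equation_37}.

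The ``main obstacle'' you flag in (c) actually disappears. The polynomial $P(X)-V\in K[X]$ has degree $\deg P\geq 2$ and vanishes at $X=u$, so $K\subset K(u)$ is already an algebraic, hence elementary, extension by Definition \ref{df:Definition_2}, with no radical formula or branch bookkeeping needed.
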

\begin{proof}

We have already seen (see \ref{pr:Proposition_4} and \ref{pr:Proposition_5}  ) that the function

$$V(x,t)=\sum_{s=1}^{k''}A_{j_{s}}V_{j_{s}}(x,t)$$

is elementary, and since $P(u)-d=0$ is resolved by radicals, the functions $\alpha_{k'} ((c_i)_{i=0}^{l},d)$, such that $P(\alpha_k ((c_i)_{i=0}^{l},d))-d=0$, are algebraic functions.

So the solutions

$$u(x,t)=\alpha_{k'} \bigg((c_i)_{i=0}^{l},\sum_{s=1}^{k''}A_{j_{s}}V_{j_{s}}(x,t)\bigg),$$

are compositions of algebraic functions $\alpha_{k'}$ and elementary functions $V$ and thus $u(x,t)$ are the elementary solutions.	
\end{proof}

\quad

As we saw in the previous section, let us look the example of non-linear 
PDEs in the space-time unidemnsional $(x,t)$ that can be solved using \eqref{eq:Equation_35} .

\quad

\begin{example}\label{ex:Example_4}
	
	Given equation
	
	\begin{equation}\label{eq:Equation_38}
	\partial_{t}^{\nu} \circ P u = \partial_x x \partial_x \frac{\ln x}{\cosh x}\partial_x \tanh x \partial_x \frac{\sinh x}{e^{x}}\partial_x \frac{e^{\frac{x}{2}}}{\cos(\frac{\sqrt{3}}{2}x)}\partial_x (u^{3}-u),	
	\end{equation}
	
	with $ n-1 < \nu < n $ with $n >1$ and $	\partial_{t}^{\nu}$ is the fractional derivative of Caputo (\cite{diethelm2019general})
	
	$$	\partial_{t}^{\nu}f(t)=\frac{1}{\Gamma(n-\nu)}\int_{0}^{t}(t-\tau)^{(n-\nu-1)}f^{(n)}(\tau)d\tau$$
	
	and $P$ is an operator functional polinomial defined in such way
	
	$$P(f(t))=f^{3}(t)-f(t),$$
	
	in particular
	
		$$\partial_{t}^{\nu} \circ P(f(t))=\frac{1}{\Gamma(n-\nu)}\int_{0}^{t}(t-\tau)^{(n-\nu-1)}(f^{3}(\tau)-f(\tau))^{(n)}d\tau.$$ 
		
		\quad
	Let's solve the equation \eqref{eq:Equation_38}	by finding the solutions that make both sides equal to zero. The second member becomes zero if
	
	$$ \partial_x x \partial_x \frac{\ln x}{\cosh x}\partial_x \tanh x \partial_x \frac{\sinh x}{e^{x}}\partial_x \frac{e^{\frac{x}{2}}}{\cos(\frac{\sqrt{3}}{2}x)}\partial_x (u^{3}-u)=0.$$
	
	By \eqref{pr:Proposition_9}, since the equation associated is $u^{3}-u-d=0$ and so it's resolved by radicals because it's a cubic equation and 	$$ \partial_x x \partial_x \frac{\ln x}{\cosh x}\partial_x \tanh x \partial_x \frac{\sinh x}{e^{x}}\partial_x \frac{e^{\frac{x}{2}}}{\cos(\frac{\sqrt{3}}{2}x)}\partial_x V(x,t)=0, $$ admits elementary solutions of the type $$ V(x,t)=A'h_1(t)(e^{-\frac{x}{2}}\cos \bigg(\frac{4 \pi}{3}+\frac{\sqrt{3}}{2}x\bigg)+ B'h_2(t)e^{\frac{x}{2}}\bigg(\cos \bigg(\frac{4 \pi}{3}+\frac{\sqrt{3}}{2}x\bigg)-\cos \bigg(\frac{2 \pi}{3}+\frac{\sqrt{3}}{2}x\bigg) + \cos \bigg(\frac{\sqrt{3}}{2}x\bigg)   \bigg) + C'h_3 (t)     $$
	(see example \ref{ex:Example_2} ), the equation \eqref{eq:Equation_38} has the elementary solutions of the type \eqref{eq:Equation_37}

	$$u(x,t)=\alpha_{k'} \bigg(0,-1,0,1,A'V_1(x,t)+B'V_2(x,t)+C'h_3 (t) \bigg),$$
	
	with $ 1\leq k' \leq m$ and $m \leq 3$ and 
	
	$$V_1 (x,t)=A'h_1(t)(e^{-\frac{x}{2}}\cos \bigg(\frac{4 \pi}{3}+\frac{\sqrt{3}}{2}x\bigg),$$
	
	$$V_3 (x,t)=h_2(t)e^{\frac{x}{2}}\bigg(\cos \bigg(\frac{4 \pi}{3}+\frac{\sqrt{3}}{2}x\bigg)-\cos \bigg(\frac{2 \pi}{3}+\frac{\sqrt{3}}{2}x\bigg) + \cos \bigg(\frac{\sqrt{3}}{2}x\bigg)   \bigg).$$	
	
	We can determine explicity the $\alpha_k$ functions with Del Ferro-Fontana's formula of depressed cubic (\cite{gavagna2012soluzione}) which we recall here.
\quad

If $z^{3}+pz+q=0$ the solutions are 

$$z_1=x_1+x_2$$

$$z_2=\zeta_{3}x_1+ \zeta_{3}^{2}x_2$$

$$z_3=\zeta_{3}^{2}x_1+ \zeta_{3}x_2,$$ 

with 

$$x_1=\sqrt[3]{-\frac{q}{2}+\sqrt{\frac{q^{2}}{4}+\frac{p^{3}}{27}}}$$

$$x_2=\sqrt[3]{-\frac{q}{2}-\sqrt{\frac{q^{2}}{4}+\frac{p^{3}}{27}}},$$  	
and $\zeta_{3}=e^{\frac{2i\pi}{3}}$ the primitive root of unity of order three.	

In this case $q=-V(x,t)$ and $p=-1$ so we get

$$x_1(x,t)=\sqrt[3]{\frac{V(x,t)}{2}+\sqrt{\frac{(V(x,t))^{2}}{4}-\frac{1}{27}}}$$

$$x_2(x,t)=\sqrt[3]{\frac{V(x,t)}{2}-\sqrt{\frac{(V(x,t))^{2}}{4}-\frac{1}{27}}}	.$$

So the potentials solutions of \eqref{eq:Equation_38} are

$$z_l(x,t)=\zeta_{3}^{l-1}x_1 (x,t)+ \zeta_{3}^{3-l+1}x_2 (x,t)$$

with $ 1 \leq l \leq 3$.

We still need to determine the time-coefficients of $V(x,t)$ i.e $h_l(t)$ for $1 \leq l \leq 3$ such that the member with operator non-linear $\partial_{t}^{\nu} \circ P(f(t))$ is zero.

\quad

$$	\partial_{t}^{\nu} \circ P z_l (x,t)=\frac{1}{\Gamma(n-\nu)}\int_{0}^{t}(t-\tau)^{(n-\nu-1)}\partial_{\tau}^{(n)}(z_{l}^{3}(x,\tau)-z_{l}(x,\tau))d\tau=\frac{1}{\Gamma(n-\nu)}\int_{0}^{t}(t-\tau)^{(n-\nu-1)}\partial_{\tau}^{(n)}V(x,\tau)d \tau=
 $$ 
 
 $$=\frac{1}{\Gamma(n-\nu)}\int_{0}^{t}(t-\tau)^{(n-\nu-1)}\partial_{\tau}^{(n)}(A'V_1(x,\tau)+B'V_2(x,\tau)+C'h_3 (\tau))d \tau=$$
 
 $$\hspace{-2cm}=A'\frac{1}{\Gamma(n-\nu)}\int_{0}^{t}(t-\tau)^{(n-\nu-1)}\partial_{\tau}^{(n)}V_1(x,\tau)d\tau+B'\frac{1}{\Gamma(n-\nu)}\int_{0}^{t}(t-\tau)^{(n-\nu-1)}\partial_{\tau}^{(n)}V_2(x,\tau)d\tau+C'\frac{1}{\Gamma(n-\nu)}\int_{0}^{t}(t-\tau)^{(n-\nu-1)}\partial_{\tau}^{(n)}h_3 (\tau)d\tau=$$
 
 $$\hspace{-2cm} =A'e^{-\frac{x}{2}}\cos \bigg(\frac{4 \pi}{3}+\frac{\sqrt{3}}{2}x\bigg)\frac{1}{\Gamma(n-\nu)}\int_{0}^{t}(t-\tau)^{(n-\nu-1)}\partial_{\tau}^{(n)}h_1 (\tau)d\tau$$
 
 $$\hspace{-2cm}+B'e^{\frac{x}{2}}\bigg(\cos \bigg(\frac{4 \pi}{3}+\frac{\sqrt{3}}{2}x\bigg)-\cos \bigg(\frac{2 \pi}{3}+\frac{\sqrt{3}}{2}x\bigg) + \cos \bigg(\frac{\sqrt{3}}{2}x\bigg)   \bigg)\frac{1}{\Gamma(n-\nu)}\int_{0}^{t}(t-\tau)^{(n-\nu-1)}\partial_{\tau}^{(n)}h_2 (\tau)d\tau$$
 $$+C'\frac{1}{\Gamma(n-\nu)}\int_{0}^{t}(t-\tau)^{(n-\nu-1)}\partial_{\tau}^{(n)}h_3 (\tau)d\tau.$$
 
Possible solutions are found by solving the following equations

$$\partial_{\tau}^{(n)}h_l (\tau)=0$$

for $1 \leq l \leq 3$.

Therefore 

\begin{equation}\label{eq:Equation_39}
h_l(t)=\sum_{j=0}^{m_l}C_{j,l} t^{j},		
\end{equation}

for opportune constant $C_{j,l}$ such that $C_{j',l}\neq 0$ for an opportune $j'>1$ and $1 \leq m_l <n$ for $1\leq l \leq 3$.

We can therefore summarize the complete set of valid solutions of equation \eqref{eq:Equation_38} .

\begin{equation}\label{eq:Equation_40}	
z_l(x,t)=\zeta_{3}^{l-1}x_1 (x,t)+ \zeta_{3}^{3-l+1}x_2 (x,t),	
\end{equation}

with $x_1(x,t)=\sqrt[3]{\frac{V(x,t)}{2}+\sqrt{\frac{(V(x,t))^{2}}{4}-\frac{1}{27}}}$ and $x_2(x,t)=\sqrt[3]{\frac{V(x,t)}{2}-\sqrt{\frac{(V(x,t))^{2}}{4}-\frac{1}{27}}}$ and

$$V(x,t)=\sum_{j=0}^{m_1}C_{j,1} t^{j}e^{-\frac{x}{2}}\cos \bigg(\frac{4 \pi}{3}+\frac{\sqrt{3}}{2}x\bigg)+\sum_{j=0}^{m_2}C_{j,2} t^{j} e^{\frac{x}{2}}\bigg(\cos \bigg(\frac{4 \pi}{3}+\frac{\sqrt{3}}{2}x\bigg)-\cos \bigg(\frac{2 \pi}{3}+\frac{\sqrt{3}}{2}x\bigg) + \cos \bigg(\frac{\sqrt{3}}{2}x\bigg)   \bigg) + \sum_{j=0}^{m_3}C_{j,3} t^{j}$$

for opportune polynomials $\sum_{j=0}^{m_l}C_{j,l} t^{j}$  defined in \eqref{eq:Equation_39}, for $1 \leq l \leq 3$.

\end{example}

\subsection{The non-homogeneus non-linear case $\partial_{x}D_{f_{l},1}^{(k-1)}P(u)=g(x,t)$ }\label{sse:Subsection5_3}

\quad

In this subsection we study the equation

\begin{equation}\label{eq:Equation_41}
\partial_{x}D_{f_{l},1}^{(k-1)}P(u)=g(x,t).
\end{equation}
\quad
As in the previous section, $deg_{u}P(u)>1$, and thus the first step consists of analyzing the linear equation associated with \eqref{eq:Equation_41}, posing $V(x,t)=P(x,t)$,

$$\partial_{x}D_{f_{l},1}^{(k-1)}V=g(x,t).$$

Which is the equation of the type \eqref{eq:Equation_33}, and so the solutions are, using notations of \ref{sse:Subsection5_1}

$$V(x,t)=\sum_{j=0}^{k-1}A_j V_j(x,t)+ B\bar{V}_{k-1}(x,t),$$

where we recall that $V_j(x,t)$ are solutions in the homogeneous case

$$\partial_{x}D_{f_{l},1}^{(k-1)}V=0$$ and $\bar{V}_{k-1}(x,t)$ are particular solutions of the non-homogeneous case, relating to the case \eqref{eq:Equation_33}.

And again, as the previous section, the solution $u(x,t)$ is derived from the equation $P(u)-V(x,t)=0$, and it follows that

$$u(x,t)=\alpha_{k'} \bigg((c_i)_{i=0}^{l},\sum_{j=0}^{k-1}A_{j}V_{j}(x,t)+ B\bar{V}_{k-1}(x,t)\bigg),$$

where $\alpha_{k'}$ are the functions that solve, as in the previous section, the equation $P(u)-V(x,t)=0$.

Following these remarks, we can establish a criterion for determining the existence of elementary solutions of \eqref{eq:Equation_41} similar to that used in the homogeneous case.

\begin{proposition}\label{pr:Proposition_10}
	The equation \eqref{eq:Equation_41} 
	
	$$\partial_{x}D_{f_{l},1}^{(k-1)}P(u)=g(x,t),$$
	
	has elementary solutions of the type   
	
	$$u(x,t)=\alpha_{k'} \bigg((c_i)_{i=0}^{l},\sum_{s=1}^{k''}A_{j_{s}}V_{j_{s}}(x,t)+B\bar{V}_{k-1}(x,t)\bigg),$$
	
	with $k''\leq k-1$, if   $V_{j_{s}}(x,t)$ for $ 1\leq s \leq k''$, solutions of the linear equation associated with \eqref{eq:Equation_35}
	$$ \partial_{x}D_{f_{l},1}^{(k-1)}V=0$$ satisfy the propositions \ref{pr:Proposition_4} and/or \ref{pr:Proposition_5} and $\bar{V}_{k-1}(x,t)$, which is a particular solution associated with equation \eqref{eq:Equation_33} i.e
	
	$$\partial_{x}D_{f_{l},1}^{(k-1)}V=g(x,t)$$ ,  satisfy the propositions \ref{pr:Proposition_7} and/or \ref{pr:Proposition_8} and the equation
	$P(u)-d=0$ can be resolved by radicals. 	
\end{proposition}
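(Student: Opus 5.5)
The plan is to mirror the proof of Proposition \ref{pr:Proposition_9}, adding the particular solution of the non-homogeneous linear problem.

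\emph{Step 1: reduce to the linear problem.} Set $V=P(u)$, so that \eqref{eq:Equation_41} becomes $\partial_{x}D_{f_{l},1}^{(k-1)}V=g(x,t)$.

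\emph{Step 2: show that $V$ is elementary in $x$.} By linearity of the nested derivative operator, every function
$$V(x,t)=\sum_{s=1}^{k''}A_{j_s}V_{j_s}(x,t)+B\bar V_{k-1}(x,t)$$
solves the linear problem. Here each $V_{j_s}$ solves the homogeneous equation $\partial_{x}D_{f_{l},1}^{(k-1)}V=0$, by Proposition \ref{pr:Proposition_2}, and $\bar V_{k-1}$ is the particular solution given by Proposition \ref{pr:Proposition_6}. The hypotheses let us conclude as follows.
\begin{itemize}
\item Each $V_{j_s}$ is elementary in $x$, by Propositions \ref{pr:Proposition_4} and/or \ref{pr:Proposition_5}, that is, through Liouville's criterion \eqref{eq:Equations_26} or Corollary \ref{cor:Corollary_1}.
\item $\bar V_{k-1}$ is elementary, by Propositions \ref{pr:Proposition_7} and/or \ref{pr:Proposition_8}.
\end{itemize}
A finite $\mathbb{C}$-linear combination of elementary functions is elementary, since the field $K_t$ at the end of a chain of elementary extensions is closed under sums and scalar multiples. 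To handle several summands at once, I would concatenate their chains of elementary extensions in the sense of Definition \ref{df:Definition_2}. Adjoining the generators of one chain over the top field of another keeps every step exponential, logarithmic or algebraic. Hence $V$ lies in a single elementary field and is elementary. The time coefficients ($f(t)$, $B$, etc.) are treated as constants with respect to $\partial_x$, exactly as in Proposition \ref{pr:Proposition_9}.

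\emph{Step 3: recover $u$.} By hypothesis, $P(u)-d=0$ is solvable by radicals. So each root $\alpha_{k'}((c_i)_{i=0}^{l},d)$ is obtained from $d$ by field operations and root extractions, and hence is an algebraic function of $d$. Substituting $d=V(x,t)$, the function $u=\alpha_{k'}((c_i),V)$ satisfies $P(u)=V$, so it solves \eqref{eq:Equation_41}. Moreover, $u$ is obtained from the elementary field containing $V$ by a finite tower of radical, hence algebraic, extensions. Therefore $u$ is elementary in $x$.

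The only point needing care is the composition claim in Step 3: the composition of an algebraic function with an elementary one is elementary. I would verify it directly. If $V\in K$, with $K$ elementary over $\mathbb{C}(x)$, then $u$ satisfies $P(u)-V=0$, a polynomial equation over $K$. Taking an irreducible factor of this polynomial gives an algebraic extension $K\subset K(u)$, which is one more elementary step. The derivation extends uniquely to $K(u)$ because we are in characteristic zero.

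Note that this argument in fact uses only the algebraicity of the roots, not solvability by radicals. Radicals matter only for writing $u$ explicitly, as in Example \ref{ex:Example_4}. I therefore do not expect a genuine obstacle. The remaining check is that the ranges of $j_s$ and the constant $B$ match the statement, which is bookkeeping.
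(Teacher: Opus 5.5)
Your argument is essentially the paper's. Proposition \ref{pr:Proposition_10} gets no separate proof there; it is justified as ``similar to the homogeneous case'', i.e.\ by rerunning the proof of Proposition \ref{pr:Proposition_9}: $V$ is elementary, the $\alpha_{k'}$ are algebraic, and an algebraic function of an elementary function is elementary. Your compositum-of-chains argument and the single algebraic step $K\subset K(u)$ just make explicit what the paper asserts. Your remark that elementarity needs only the algebraicity of the roots, not solvability by radicals, is also correct.

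One step is wrong as written, although the error is inherited from the paper's own formula $u=\sum_j A_j u_j+B\bar u_{k-1}$. By linearity, $\partial_{x}D_{f_{l},1}^{(k-1)}\bigl(\sum_{s}A_{j_s}V_{j_s}+B\bar V_{k-1}\bigr)=B\,g(x,t)$. So this $V$ solves the inhomogeneous linear equation only when $B=1$. Your Step 3 conclusion that $u$ solves \eqref{eq:Equation_41} therefore needs $B=1$. This is a real restriction, not bookkeeping, even though $u$ is elementary for every $B$.
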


\quad
Let's look at an example of where to apply the proposition \ref{pr:Proposition_10}.

\begin{example}\label{ex:Example_5}
Given non-linear PDE

\begin{equation}\label{eq:Equation_42}
\partial_{t}^{\nu} \circ P u=\partial_{x} \frac{1}{\cosh x}\partial_{x} \frac{\sinh x}{e^{x}}\partial_{x}\frac{e^{\frac{x}{2}}}{\cos\big( \frac{\sqrt{3}}{2}x \big)}\partial_{x}(u^{4}-u)+P_{n-1}(t)
e^{-x}(\tanh^{2} x+\tanh x-2),	
\end{equation}	
	with $ n-1 < \nu < n $ with $n >1$ and $	\partial_{t}^{\nu}$ is the fractional derivative of Caputo (see example \ref{ex:Example_4}), and $P$ is an operator functional polynomial defined in such a way
	
	$$P(f(t))=f^{4}(t)-f(t).$$
	
	Whereas $P_{n-1}(t)$ is a fixed polynomial function dependig on the time variable i.e
	
	$$P_{n-1}(t)=\sum_{j=0}^{n-1}B_j t^{j},$$
	with $B_{n-1}\neq 0.$
	
	\quad
	As in example \ref{ex:Example_4}, we want to solve the equation looking for solutions that cancel out two members. Cancelling the second member implies
	
	$$\partial_{x} \frac{1}{\cosh x}\partial_{x} \frac{\sinh x}{e^{x}}\partial_{x}\frac{e^{\frac{x}{2}}}{\cos\big( \frac{\sqrt{3}}{2}x \big)}\partial_{x}(u^{4}-u)=-P_{n-1}(t)
	e^{-x}(\tanh^{2} x+\tanh x-2).$$	
	First of all, we must resolve the linear equation associated
	
	$$\partial_{x} \frac{1}{\cosh x}\partial_{x} \frac{\sinh x}{e^{x}}\partial_{x}\frac{e^{\frac{x}{2}}}{\cos\big( \frac{\sqrt{3}}{2}x \big)}\partial_{x}(V(x,t))=-P_{n-1}(t)
e^{-x}(\tanh^{2} x+\tanh x-2),$$	
	 with $V(x,t)=u^{4}(x,t)-u(x,t)$.
	 
	 This equation can be resolved in a similar way to example \ref{ex:Example_3}, so the possible solutions of the equation above are
	 
	 $$ V(x,t)=A_1 h_1(t))\bigg[e^{-\frac{x}{2}}\cos\bigg( \frac{4\pi}{3} +\frac{\sqrt{3}}{2}x\bigg) \bigg]+A_2 h_2(t)+  P_{n-1}(t)\bigg[ xe^{-\frac{x}{2}}\cos\bigg(\frac{4\pi}{3} +\frac{\sqrt{3}}{2}x  \bigg)- e^{-\frac{x}{2}}\cos\bigg(\frac{2\pi}{3} +\frac{\sqrt{3}}{2}x  \bigg)  \bigg].$$
		
	The second step consists of resolving the quartic equation
	
	$$ u^{4}(x,t)-u(x,t)-V(x,t)=0,$$
	
	since it's a quartic equation, it's solvable by radicals.
	
	Now we recall Ferrari's Formulas (\cite{gavagna2012soluzione}) for depressed quartic equation
	
	$$z^{4}+pz^{2}+qz+r=0,$$
	
	in particular, we need the case of $p=0$, i.e
	
	$$z^{4}+qz+r=0.$$
	
	Such formulas in this last case are
	
	$$z_{1,2}=\frac{1}{2}\Bigg( \sqrt{z_0} \pm \sqrt{-2z_0+ \frac{2q}{\sqrt{2z_0}}} \Bigg)$$
	
		$$z_{3,4}=\frac{1}{2}\Bigg(-\sqrt{z_0} \pm \sqrt{-2z_0- \frac{2q}{\sqrt{2z_0}}} \Bigg),$$
		
		where $z_0$ is a solution of the cubic equation $z_{0}^{3}-rz_{0}-\frac{q^{2}}{8}$, so
		
		$$z_{0}=\sqrt[3]{-\frac{q^{2}}{16}+\sqrt{\frac{q^{4}}{256}-\frac{r^{3}}{27}}}+\sqrt[3]{-\frac{q^{2}}{16}-\sqrt{\frac{q^{4}}{256}-\frac{r^{3}}{27}}}. $$
		
	Let's apply these formulas to equation  $ u^{4}(x,t)-u(x,t)-V(x,t)=0$	and we will obtain possible solutions of \eqref{eq:Equation_42}

	\begin{equation}\label{eq:Equation_43}
	z_{1,2}(x,t)=\frac{1}{2}\Bigg( \sqrt{z_0(x,t)} \pm \sqrt{-2z_0(x,t)-\frac{2}{\sqrt{2z_0(x,t)}}} \Bigg)	
	\end{equation}		
		$$z_{3,4}(x,t)=\frac{1}{2}\Bigg(-\sqrt{z_0(x,t)} \pm \sqrt{-2z_0(x,t)+\frac{2}{\sqrt{2z_0}}} \Bigg),$$
		
		where
		\begin{equation}\label{eq:Equation_44}
		z_{0}(x,t)=\sqrt[3]{-\frac{1}{16}+\sqrt{\frac{1}{256}+\frac{(V(x,t))^{3}}{27}}}+\sqrt[3]{-\frac{1}{16}-\sqrt{\frac{1}{256}+\frac{(V(x,t))^{3}}{27}}}.
		\end{equation}
		\quad
		To fully determine these types of solutions, we must calculate the coefficients $h_i(t)$ for $i=1,2$ such that the left-hand side of $\eqref{eq:Equation_42}$ is zero. Since  the left-hand side is operator $\partial_{t}^{\nu} \circ P $, the calculation is similar to example \ref{ex:Example_4}, and it can be derived
		
		that a solution is
		
		$$h_l(t)=\sum_{j=0}^{m_l}C_{j,l} t^{j},$$
		
		for opportune constant $C_{j,l}$ such that $C_{j',l}\neq 0$ for an opportune $j'>1$ and $1 \leq m_l <n$ for $1\leq l \leq 2.$
		
	Furthermore, one can see that $\partial_{t}^{\nu} \circ P P_{n-1}(t)=0$. 
	\quad
	So we finally obtain the possible complete solutions for \eqref{eq:Equation_42}	
	 
	\begin{equation}\label{eq:Equation_45}
	z_{k'}(x,t)=\alpha_{k'}\bigg(0,-1,0,0,1,A'V_1(x,t)+B'\bar{V}_3(x,t)+C'h_2 (t) \bigg),	
	\end{equation}
	with $1 \leq k' \leq 4$ and $\alpha_{k'}$ are the functions \eqref{eq:Equation_43} and 
	
	$$V_1(x,t)=\sum_{j=0}^{m_1}C_{j,1} t^{j}\bigg[e^{-\frac{x}{2}}\cos\bigg( \frac{4\pi}{3} +\frac{\sqrt{3}}{2}x\bigg) \bigg]  $$
	
	$$h_2(t)=\sum_{j=0}^{m_2}C_{j,2} t^{j}$$
	
	$$\bar{V}_3(x,t)= \sum_{j=0}^{n-1}B_j t^{j} \bigg[ xe^{-\frac{x}{2}}\cos\bigg(\frac{4\pi}{3} +\frac{\sqrt{3}}{2}x  \bigg)- e^{-\frac{x}{2}}\cos\bigg(\frac{2\pi}{3} +\frac{\sqrt{3}}{2}x  \bigg)  \bigg] . $$
	
	The latter function is the particular solution for \eqref{eq:Equation_42}, and recall that $B_{n-1}\neq 0$.
\end{example}

\quad

\section{Criteria whether elementary solutions exist for non-linear PDE $\mathcal{O}_t u=\mathcal{O}_x u $ using the equations $\partial_{x}D_{f_{l},1}^{(k-1)}P(u)=0$, $\partial_{x}D_{f_{l},1}^{(k-1)}P(u)=g(x,t)$  }\label{se:Section_6}

\quad

Let's return to the PDEs introduced in the section \ref{Se:Section_1}

$$\mathcal{O}_t u=\mathcal{O}_x u .$$

We recall that we got two strategies to solve them 

	$$ 1)\mathcal{O}_t u=0 , \mathcal{O}_x u=0     $$
$$ 2)\mathcal{O}_x u \in W=<g_1(x),...,g_s(x)>, \hspace{0.1cm} if\hspace{0.1cm} u(x,t) \in W \hspace{0.1cm}\mathcal{O}_t h_i(t)=\lambda(t) h_i(t)\hspace{0.1cm} 1\leq i \leq s,$$

if $u(x,t)= \sum_{i=1}^{s}h_i(t)g_i(x)$.

As we saw in section \ref{Se:Section_1}, the operator $\mathcal{O}_x u$ in any case 1) and 2) can be viewed as in \eqref{eq:Equation_1}

$$\mathcal{O}_x u=\mathcal{\overline{O}}_x u + \lambda(t) u.$$

So we must concentrate on

$$\mathcal{\overline{O}}_x u=0.$$ 

If $\mathcal{\overline{O}}_x u$ is endowed with $nested$ $partition$ (see \eqref{eq:Equation_15}, \eqref{eq:Equation_16}, \eqref{eq:Equation_17}) i.e 

$$\mathcal{\overline{O}}_x u=\sum_{j'=1}^{t'}a_{j'}(x,t)\prod_{j''=0}^{l''}\big(\partial_{x}^{(j'')}u\big)^{i_{j''}}	\partial_{x}D_{f_{l},1}^{(k_{j'}-1)}P(u).                                            $$

Then there's an index $\tilde{j}$ such that $\tilde{k}=k_{\tilde{j}}=min_{1\leq j' \leq t'} k_{j'}$ and

$$\partial_{x}D_{f_{l},1}^{(\tilde{k}-1)}P(u)=0 \rightarrow \mathcal{\overline{O}}_x u=0.$$

Then the solutions of $\partial_{x}D_{f_{l},1}^{(\tilde{k}-1)}P(u)=0 $ are solutions of $\mathcal{\overline{O}}_x u=0$, and from this fact we can determine if there are elementary solutions of $\mathcal{\overline{O}}_x u=0$.

\quad

We also want to find the operators such that 

$$\partial_{x}D_{f_{l},1}^{(\tilde{k}-1)}P(u)=g(x,t) \rightarrow \mathcal{\overline{O}}_x u=0.$$

\quad

And this is possible if it exists an integer $\tilde{k}$ such that 

\begin{equation}\label{eq:Equation_46}
\mathcal{\overline{O}}_x u=\sum_{j'=1,k_{j'}-1\geq \tilde{k}}^{t'}a_{j'}(x,t)\prod_{j''=0}^{l''}\big(\partial_{x}^{(j'')}u\big)^{i_{j''}}	\bigg(\partial_{x}D_{f_{l},1}^{(k_{j'}-1)}P(u)-\partial_{x}D_{f_{l},\tilde{k}}^{(k_{j'}-1)}g(x,t)\bigg)+
\end{equation}

$$\sum_{j'=1, k_{j'}=\tilde{k}}^{t'}a_{j'}(x,t)\prod_{j''=0}^{l''}\big(\partial_{x}^{(j'')}u\big)^{i_{j''}}	\bigg(\partial_{x}D_{f_{l},1}^{(\tilde{k}-1)}P(u)-g(x,t)\bigg). $$

\quad
In this case we'll say that $\mathcal{\overline{O}}_x u$ has got a $nested $ $partition$ non-homogeneous.

To summarize, we have the following properties

\begin{proposition}\label{pr:Proposition_11}
Given 

$$\mathcal{\overline{O}}_x u=0,$$

where  $\mathcal{\overline{O}}_x u \in \mathcal{C}^{\infty}(U;\mathbb{C})\big[\partial^{(k)}_{x}u \big]_{k=0}^{l}$. If $\mathcal{\overline{O}}_x u$ has a $nested$ $partition$ and $\mathcal{\overline{O}}_x u=0$ if $\partial_{x}D_{f_{l},1}^{(k-1)}P(u)=0$, for opportune functions $f_{l}$, and polinomial $P(u) \in \mathbb{C}[u]$.
Then the solutions of $\mathcal{\overline{O}}_x u=0$ are elementary solutions if $\partial_{x}D_{f_{l},1}^{(k-1)}P(u)=0$ satisfies the proposition \ref{pr:Proposition_9}.
\end{proposition}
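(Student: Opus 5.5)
The plan is to reduce the statement to Proposition \ref{pr:Proposition_9} using the factorised form that a nested partition gives to $\mathcal{\overline{O}}_x u$. By \eqref{eq:Equation_15}, \eqref{eq:Equation_16} and \eqref{eq:Equation_17}, exactly as in the computation following those equations, we can write
$$\mathcal{\overline{O}}_x u=\sum_{j'=1}^{t'}a_{j'}(x,t)\prod_{j''=0}^{l''}\big(\partial_{x}^{(j'')}u\big)^{i_{j''}}\partial_{x}D_{f_{l},1}^{(k_{j'}-1)}P(u).$$
We then choose $\tilde{k}=\min_{j'}k_{j'}$ and take $k=\tilde{k}$ in the statement. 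The first step is to check the implication $\partial_{x}D_{f_{l},1}^{(\tilde{k}-1)}P(u)=0\Rightarrow\mathcal{\overline{O}}_x u=0$. For every $j'$ we have the factorisation
$$\partial_{x}D_{f_{l},1}^{(k_{j'}-1)}=\partial_{x}D_{f_{k_{j'}-1}}\cdots D_{f_{\tilde{k}}}\circ\big(D_{f_{\tilde{k}-1}}\cdots D_{f_{1}}\big),$$
and $\partial_x D_{f_l,1}^{(\tilde k-1)}P(u)=0$ means $D_{f_{\tilde{k}-1}}\cdots D_{f_1}P(u)$ is independent of $x$, say $=c(t)$. The step I expect to be the main obstacle is exactly this one. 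The trailing operator $D_{f_{\tilde k}}=f_{\tilde k}\partial_x$ kills $c(t)$, since $f_{\tilde k}\partial_x c(t)=0$, so every higher term vanishes. This holds for the particular solutions $u_j$ of Proposition \ref{pr:Proposition_2}, and I would use it in this form, reading ``$\mathcal{\overline{O}}_x u=0$ if $\partial_{x}D_{f_{l},1}^{(k-1)}P(u)=0$'' in the statement as this hypothesis.

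Second, we describe the solutions of $\partial_{x}D_{f_{l},1}^{(\tilde{k}-1)}P(u)=0$. Set $V=P(u)$. By linearity and Proposition \ref{pr:Proposition_2}, $V$ lies in the span of the $V_j$ with $j\le \tilde{k}-1$. We then invert through the root functions $\alpha_{k'}$ of $P(u)-d=0$, which gives the family \eqref{eq:Equation_37}. By the first step, each such $u$ solves $\mathcal{\overline{O}}_x u=0$.

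Third, we apply Proposition \ref{pr:Proposition_9} directly. If the $V_{j_s}$ satisfy Propositions \ref{pr:Proposition_4} and/or \ref{pr:Proposition_5}, then their linear combination $V$ is elementary. If $P(u)-d=0$ is solvable by radicals, the $\alpha_{k'}$ are algebraic functions, so $u=\alpha_{k'}\big((c_i),V\big)$ is obtained by a finite chain of elementary extensions over $\mathbb{C}(x)$, an algebraic extension stacked on the chain for $V$. Hence $u$ is elementary in the sense of Definition \ref{df:Definition_2}.

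The conclusion is then that the solutions of $\mathcal{\overline{O}}_x u=0$ obtained in this way are elementary. This is the intended content of the statement: it concerns the solutions coming from the nested factor, not every solution of $\mathcal{\overline{O}}_x u=0$.
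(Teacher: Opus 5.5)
Your proposal is correct and follows the same route as the paper: write $\mathcal{\overline{O}}_x u$ through the nested partition, take the minimal order $\tilde{k}=\min_{j'}k_{j'}$, use that $\partial_{x}D_{f_{l},1}^{(\tilde{k}-1)}P(u)=0$ forces every term to vanish, and then apply Proposition \ref{pr:Proposition_9}. Your remark that $D_{f_{\tilde{k}}}$ annihilates the $x$-independent quantity $D_{f_{\tilde{k}-1}}\cdots D_{f_{1}}P(u)=c(t)$ supplies the justification of that implication, which the paper only asserts.
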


\quad

\begin{proposition}\label{pr:Proposition_12}
	Given 
	
	$$\mathcal{\overline{O}}_x u=0,$$
	
	where  $\mathcal{\overline{O}}_x u \in \mathcal{C}^{\infty}(U;\mathbb{C})\big[\partial^{(k)}_{x}u \big]_{k=0}^{l}$. If $\mathcal{\overline{O}}_x u$ has a $nested$ $partiton$ non-homogeneous and $\mathcal{\overline{O}}_x u=0$ if $\partial_{x}D_{f_{l},1}^{(k-1)}P(u)=g(x,t)$, for  opportune functions $f_{l}$,$g(x,t)$ and polinomial $P(u) \in \mathbb{C}[u]$.
	Then the solutions of $\mathcal{\overline{O}}_x u=0$ are solutions elementaries if $\partial_{x}D_{f_{l},1}^{(k-1)}P(u)=g(x,t)$ satisfies the proposition \ref{pr:Proposition_10}.
\end{proposition}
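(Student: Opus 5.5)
The plan is to reduce Proposition \ref{pr:Proposition_12} to Proposition \ref{pr:Proposition_10}, exactly as Proposition \ref{pr:Proposition_11} reduces to Proposition \ref{pr:Proposition_9}. The argument uses the non-homogeneous nested partition \eqref{eq:Equation_46}.

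\textbf{Step 1: a solution of the nested equation solves the original equation.} Fix $\tilde{k}$ as in \eqref{eq:Equation_46}. Let $u$ satisfy $\partial_{x}D_{f_{l},1}^{(\tilde{k}-1)}P(u)=g(x,t)$. Every summand of \eqref{eq:Equation_46} with $k_{j'}=\tilde{k}$ contains the factor $\partial_{x}D_{f_{l},1}^{(\tilde{k}-1)}P(u)-g(x,t)$, so it vanishes identically on the common interval $I$ of Remark \ref{os:Observation_2}.

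For the summands with $k_{j'}-1\geq\tilde{k}$, I will use the factorization
\[
\partial_{x}D_{f_{l},1}^{(k_{j'}-1)}=\partial_{x}D_{f_{k_{j'}-1}}\cdots D_{f_{\tilde{k}}}\,\partial_{x}D_{f_{l},1}^{(\tilde{k}-1)}.
\]
Here I read $\partial_{x}D_{f_{l},\tilde{k}}^{(k_{j'}-1)}$ as the nested operator built from the outer functions $f_{\tilde{k}},\dots,f_{k_{j'}-1}$ only, so that the nested derivative in \eqref{eq:Equation_46} equals this operator applied to $\partial_{x}D_{f_{l},1}^{(\tilde{k}-1)}$. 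Applying the outer operator to both sides of $\partial_{x}D_{f_{l},1}^{(\tilde{k}-1)}P(u)=g$ shows that
\[
\partial_{x}D_{f_{l},1}^{(k_{j'}-1)}P(u)-\partial_{x}D_{f_{l},\tilde{k}}^{(k_{j'}-1)}g=0.
\]
Hence $\mathcal{\overline{O}}_x u=0$. This is the precise meaning of the hypothesis ``$\mathcal{\overline{O}}_x u=0$ if $\partial_{x}D_{f_{l},1}^{(k-1)}P(u)=g$''.

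\textbf{Step 2: the solutions of the nested equation are elementary.} By the discussion preceding Proposition \ref{pr:Proposition_10}, the solutions of $\partial_{x}D_{f_{l},1}^{(\tilde{k}-1)}P(u)=g$ obtained by the method are
\[
u=\alpha_{k'}\bigl((c_i),\textstyle\sum_s A_{j_s}V_{j_s}+B\bar V_{\tilde{k}-1}\bigr).
\]
Under the hypotheses of Proposition \ref{pr:Proposition_10}, the homogeneous pieces $V_{j_s}$ are elementary by Propositions \ref{pr:Proposition_4}/\ref{pr:Proposition_5}. The particular piece $\bar V_{\tilde{k}-1}$ is elementary by Propositions \ref{pr:Proposition_7}/\ref{pr:Proposition_8}, with its existence given by Proposition \ref{pr:Proposition_6}. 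Because $P(u)-d=0$ is solvable by radicals, each $\alpha_{k'}$ is algebraic. A composition of an algebraic function with an elementary one is elementary, since it only adjoins an algebraic extension to the chain of Definition \ref{df:Definition_2}. Combining this with Step 1, these functions are elementary solutions of $\mathcal{\overline{O}}_x u=0$.

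\textbf{Main obstacle.} The delicate point is Step 1, namely making the convention $\partial_{x}D_{f_{l},\tilde{k}}^{(k_{j'}-1)}g$ precise and checking the factorization. The indexing of the $f_l$ in \eqref{eq:Equation_6} runs from the inside out, so the outer functions $f_{\tilde{k}},\dots,f_{k_{j'}-1}$ must be identified carefully. I would verify this first on the lowest case $k_{j'}=\tilde{k}+1$, and then extend it by the composition rule $\partial_{x}D_{f_{l},1}^{(k-1)}=\partial_{x}f_{k-1}\bigl(\partial_{x}D_{f_{l},1}^{(k-2)}\bigr)$, which was already used in the proof of Proposition \ref{pr:Proposition_1}. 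One caveat remains. The conclusion applies to solutions of the reduced nested equation, not to every solution of $\mathcal{\overline{O}}_x u=0$, so I would state it in that sense, as is implicit in Proposition \ref{pr:Proposition_11}.
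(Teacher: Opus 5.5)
Your argument matches the paper's. The paper gives no separate proof of Proposition \ref{pr:Proposition_12}; it presents the result as a summary of two facts, which are exactly your Steps 1 and 2. Under a non-homogeneous nested partition \eqref{eq:Equation_46}, every solution of $\partial_{x}D_{f_{l},1}^{(\tilde{k}-1)}P(u)=g(x,t)$ satisfies $\mathcal{\overline{O}}_x u=0$, and Proposition \ref{pr:Proposition_10} makes those solutions elementary.

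One small correction to Step 1. As displayed, your factorization has one $\partial_x$ too many: the right-hand side has order $k_{j'}+1$. Iterating $\partial_{x}D_{f_{l},1}^{(k-1)}=\partial_{x}f_{k-1}\,\partial_{x}D_{f_{l},1}^{(k-2)}$ instead gives $\partial_{x}D_{f_{l},1}^{(k_{j'}-1)}=\partial_{x}D_{f_{k_{j'}-1}}\cdots D_{f_{\tilde{k}+1}}\,f_{\tilde{k}}\,\partial_{x}D_{f_{l},1}^{(\tilde{k}-1)}$, where the product of $D$'s is empty when $k_{j'}=\tilde{k}+1$. So the innermost outer factor is multiplication by $f_{\tilde{k}}$, not $D_{f_{\tilde{k}}}=f_{\tilde{k}}\partial_{x}$. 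With this fix the rest of your argument goes through unchanged.
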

\quad
Now we see some examples of PDE non-linear $\mathcal{O}_t u=\mathcal{O}_x u$ where we can apply the propositions \ref{pr:Proposition_11}, \ref{pr:Proposition_12}.

\begin{example}\label{ex:Example_6}
	Given the equation
	
	\begin{equation}\label{eq:Equation_47}
		\partial_{t}^{\nu} \circ P u=3\bigg(1-\frac{1}{\tanh^{2}x}\bigg)u^{4}\partial_{x}u+\bigg(\frac{1}{\tanh^{2}x}+\frac{2}{\tanh x}-1\bigg)u^{2}\partial_{x}u+\frac{6}{\tanh x}u^{3}(\partial_{x}u)^{2}+\frac{3}{\tanh x}u^{4}\partial_{x}^{(2)}u
	\end{equation}
	
	$$ +6u^{2}(\partial_{x}u)^{3}+18u^{3}\partial_{x}u\partial_{x}^{(2)}u+3u^{4}\partial_{x}^{(3)}u-u^{2}\partial_{x}^{(3)}+3\bigg(1-\frac{1}{\tanh^{2}x}\bigg)u^{3}\partial_{x}u+\bigg(\frac{1}{\tanh^{2}x}-\frac{1}{\tanh x}-1\bigg)u\partial_{x}u$$
	
	$$+\frac{6}{\tanh x}u^{2}(\partial_{x}u)^{2}+\frac{3}{\tanh x}u^{3}\partial_{x}^{(2)}u+6u(\partial_{x}u)^{3}+18u^{2}\partial_{x}u\partial_{x}^{(2)}u+3u^{3}\partial_{x}^{(3)}u-u\partial_{x}^{(3)}u-\frac{1}{\tanh x}\partial_{x}u $$
	
	$$ +6u(\partial_{x}u)^{2}+3u^{2}\partial_{x}^{(2)}u-\partial_{x}^{(2)}u,$$
	
	where $\partial_{t}^{\nu} \circ P$ is a temporal non-linear operator defined in the example \ref{ex:Example_4}.
	\quad

	While the operator $\mathcal{O}_x$ is endowed with $nested$ $partition$, i.e., $\mathcal{O}_x$ satisfies the equation \eqref{eq:Equation_14}. This it can verify. Infact, we can remark that the maximum degree of derivation is 3, so the nested derivation grade is 3, and there will exist two functions $f_1$ and $f_2$ that define the nested derivation. Then we must verify which type of polynomial defines this nested derivation operator by the Faa' di Bruno's Formula. In this case we need just to look that there are coefficients of  $\mathcal{O}_x$ which occur repetitvely, i.e., $3\bigg(1-\frac{1}{\tanh^{2}x}\bigg)$, $\frac{6}{\tanh x}$, $\frac{3}{\tanh x}$. Now, we find which Faa' di Bruno's Formula is opportune and then we obtain that
	 $$\partial_{t}^{\nu} \circ P u=\bigg(1-\frac{1}{\tanh^{2}x}\bigg)(u^{2}+u)\partial_{x}Q(u)+\frac{1}{\tanh x}(u^{2}+u)\partial_{x}^{(2)}Q(u)+ (u^{2}+u)\partial_{x}^{(3)}Q(u)+\frac{1}{\tanh x} \partial_{x}Q(u)+\partial_{x}^{(2)}Q(u),$$
	 
	 where $Q(u)=u^{3}-u$.
	To conclude, we need to see how $\mathcal{O}_x$ is an algebraic combination of nested derivatives. To do this, we apply the proposition \ref{pr:Proposition_1} to $\mathcal{O}_x$. In order to obtain $f_1$ and $f_2$ we just remark that $\tanh x=\frac{\sinh x}{\cosh x}$ and $1=\frac{\sinh x}{\sinh x}$, and so we discover that $f_1=\sinh x$ , $f_2=\frac{1}{\sinh x}$.
	Thus we get
	
	$$\partial_{t}^{\nu} \circ P u=u^{2}\partial_x \frac{1}{\sinh x}\partial_{x} \sinh x \partial_{x} (u^{3}-u)+ u \partial_x \frac{1}{\sinh x}\partial_{x} \sinh x \partial_{x} (u^{3}-u)+\frac{1}{\sinh x}\partial_{x} \sinh x \partial_{x} (u^{3}-u).$$
	
	This equation can be rewritten in a compact way
	
	$$\partial_{t}^{\nu} \circ P u= u^{2}\partial_{x}\Delta_H (u^{3}-u)+  u\partial_{x} \Delta_H (u^{3}-u)+\Delta_H (u^{3}-u),$$
	
	where $\Delta_H=\frac{1}{\sinh x}\partial_{x} \sinh x $ is Hyperbolic Laplacian (\cite{lao2007hyperbolic}).
	
	 In any case, applying the proposition \ref{pr:Proposition_11} to \eqref{eq:Equation_47} , we obtain
	 
	 $$\mathcal{O}_{x}=0 \hspace{0.2 cm}if\hspace{0.2 cm} \partial_{x} \sinh x\partial_{x}(u^{3}-u)=0.$$ 
	 
	 Since $\frac{1}{\sinh x}$ has an elementary antiderivative function, we obtain elementary solutions (we recall that $u^{3}-u$ is solvable by radicals) that we can determine explicitly.
	 
	 $$\frac{1}{\sinh x}\partial_{x} \sinh x\partial_{x}(u^{3}-u)=0 \rightarrow u^{3}-u=h_1(t)\ln\Big(\tanh \Big(\frac{x}{2}\Big)\Big)+h_2(t).$$(for this type of solutions, see \cite{garra2023note}).
	 We use the formulas applied in the example \ref{ex:Example_4}, and we yield
	 $$ \alpha_{k'}(0,-1,0,1,h_1(t)\ln\big(\tanh \big(\frac{x}{2}\big)\big)+h_2(t) )=\zeta_{3}^{k'}v_1(x,t)+\zeta_{3}^{3-k'}v_2(x,t),$$
	 
	 with $ 1 \leq k' \leq 3$ and 
	 
	 $$v_1(x,t)=\sqrt[3]{\frac{h_1(t)\ln\big(\tanh \big(\frac{x}{2}\big)\big)+h_2(t)}{2}+\sqrt{\frac{(h_1(t)\ln\big(\tanh \big(\frac{x}{2}\big)\big)+h_2(t))^{2}}{4}-\frac{1}{27}}}$$
	 
	 $$ v_2 (x,t)=\sqrt[3]{\frac{h_1(t)\ln\big(\tanh \big(\frac{x}{2}\big)\big)+h_2(t)}{2}-\sqrt{\frac{(h_1(t)\ln\big(\tanh \big(\frac{x}{2}\big)\big)+h_2(t))^{2}}{4}-\frac{1}{27}}}	.$$ 
	 
	 Now, to conclude this example and find definitively the solutions of \eqref{eq:Equation_47}, we must calculate $h_1(t)$, $h_2(t)$ by the equation $\partial_{t}^{\nu} \circ P u=0$.
	 
	The computation is similar to \ref{ex:Example_5}, and the solutions are 
	\begin{equation}\label{eq:Equation_48}
	h_1(t)=\sum_{j=0}^{m_1}C_{j,1} t^{j}	
	\end{equation} 
	 $$	h_2(t)=\sum_{j=0}^{m_2}C_{j,2} t^{j},                                $$
	 
	 where $n>m_1>1$, $n>m_2>1$ such that $n>1$ and $n-1 < \nu <n$, i.e., $h_1(t), h_2(t)$ are non-trivial solutions.
	 
	 So the possible solutions are \eqref{eq:Equation_47}
	 
	 \begin{equation}\label{eq:Equation_49}
	 \alpha_{k'}(0,-1,0,1,h_1(t)\ln\big(\tanh \big(\frac{x}{2}\big)\big)+h_2(t) )=\zeta_{3}^{k'}v_1(x,t)+\zeta_{3}^{3-k'}v_2(x,t),	
	 \end{equation}
	 
	 with $ 1 \leq k' \leq 3$ and
	 
	 $$v_1(x,t)=\sqrt[3]{\frac{\sum_{j=0}^{m_1}C_{j,1} t^{j}\ln\big(\tanh \big(\frac{x}{2}\big)\big)+\sum_{j=0}^{m_2}C_{j,2} t^{j}}{2}+\sqrt{\frac{(\sum_{j=0}^{m_1}C_{j,1} t^{j}\ln\big(\tanh \big(\frac{x}{2}\big)\big)+\sum_{j=0}^{m_2}C_{j,2} t^{j})^{2}}{4}-\frac{1}{27}}}$$

	 $$ v_2 (x,t)=\sqrt[3]{\frac{\sum_{j=0}^{m_1}C_{j,1} t^{j}\\ln\big(\tanh \big(\frac{x}{2}\big)\big)+\sum_{j=0}^{m_2}C_{j,2} t^{j}}{2}-\sqrt{\frac{(\sum_{j=0}^{m_1}C_{j,1} t^{j}\ln\big(\tanh \big(\frac{x}{2}\big)\big)+\sum_{j=0}^{m_2}C_{j,2} t^{j})^{2}}{4}-\frac{1}{27}}}	,$$
	 
    with $h_1(t)$, $h_2(t)$ defined in \eqref{eq:Equation_48}.

\end{example}

\begin{example}\label{ex:Example_7}
Given the following non-linear PDE, which is the modified version of \eqref{eq:Equation_47} in this way

\begin{equation}\label{eq:Equation_50}
	\partial_{t}^{\nu} \circ P u=3\bigg(1-\frac{1}{\tanh^{2}x}\bigg)u^{4}\partial_{x}u+\bigg(\frac{1}{\tanh^{2}x}+\frac{2}{\tanh x}-1\bigg)u^{2}\partial_{x}u+\frac{6}{\tanh x}u^{3}(\partial_{x}u)^{2}+\frac{3}{\tanh x}u^{4}\partial_{x}^{(2)}u
\end{equation}

$$ +6u^{2}(\partial_{x}u)^{3}+18u^{3}\partial_{x}u\partial_{x}^{(2)}u+3u^{4}\partial_{x}^{(3)}u-u^{2}\partial_{x}^{(3)}+3\bigg(1-\frac{1}{\tanh^{2}x}\bigg)u^{3}\partial_{x}u+\bigg(\frac{1}{\tanh^{2}x}-\frac{1}{\tanh x}-1\bigg)u\partial_{x}u$$

$$+\frac{6}{\tanh x}u^{2}(\partial_{x}u)^{2}+\frac{3}{\tanh x}u^{3}\partial_{x}^{(2)}u+6u(\partial_{x}u)^{3}+18u^{2}\partial_{x}u\partial_{x}^{(2)}u+3u^{3}\partial_{x}^{(3)}u-u\partial_{x}^{(3)}u-\frac{1}{\tanh x}\partial_{x}u $$

$$ +6u(\partial_{x}u)^{2}+3u^{2}\partial_{x}^{(2)}u-\partial_{x}^{(2)}u-$$

$$\sum_{j=0}^{n_3}C_j t^{j}e^{-\frac{x}{2}}\bigg\{\cos \bigg(\frac{2\pi}{3}+\frac{\sqrt{3}}{2}x  \bigg)+ \frac{1}{\tanh x}\bigg[\cos \bigg(\frac{\sqrt{3}}{2}x  \bigg)+  \frac{\tanh^{2}x-1}{\tanh x}\cos\bigg(\frac{4\pi}{3}+\frac{\sqrt{3}}{2}x  \bigg)\bigg]\bigg \}u^{2}-$$

$$\sum_{j=0}^{n_3}C_j t^{j}e^{-\frac{x}{2}}\bigg\{\cos \bigg(\frac{2\pi}{3}+\frac{\sqrt{3}}{2}x  \bigg)+ \frac{1}{\tanh x}\bigg[\cos \bigg(\frac{\sqrt{3}}{2}x  \bigg)+  \frac{\tanh^{2}x-1}{\tanh x}\cos\bigg(\frac{4\pi}{3}+\frac{\sqrt{3}}{2}x  \bigg)\bigg]\bigg \}u-$$
$$ \sum_{j=0}^{n_3}C_j t^{j}e^{-\frac{x}{2}}\bigg[\frac{1}{\tanh x} \cos\bigg(\frac{4\pi}{3}+\frac{\sqrt{3}}{2}x  \bigg)+\cos \bigg(\frac{\sqrt{3}}{2}x  \bigg) \bigg] ,                         $$

$ 1 \leq n_3 \leq n$ with $n>1$ such that $n-1< \nu <n$ and $C_{n_3}\neq 0$.
The change of equation \eqref{eq:Equation_47} consists of adding the last three lines. It is verified that $\mathcal{\overline{O}}_{x}$ of \eqref{eq:Equation_50} satisfies \eqref{eq:Equation_46} for an operator with a maximum degree of nested derivation $\partial_{x}\frac{1}{\sinh x}\partial_{x}\sinh x \partial_{x}Q(u)$ (see $Q(u)$ of \ref{ex:Example_6}) and the function $$g(x,t)=\sum_{j=0}^{n_3}C_j t^{j}e^{-\frac{x}{2}}\bigg[\frac{1}{\tanh x} \cos\bigg(\frac{4\pi}{3}+\frac{\sqrt{3}}{2}x  \bigg)+\cos \bigg(\frac{\sqrt{3}}{2}x  \bigg) \bigg] .$$

And so

$$ \frac{1}{\sinh x}\partial_{x}\sinh x \partial_{x}Q(u)=\sum_{j=0}^{n_3}C_j t^{j}e^{-\frac{x}{2}}\bigg[\frac{1}{\tanh x} \cos\bigg(\frac{4\pi}{3}+\frac{\sqrt{3}}{2}x  \bigg)+\cos \bigg(\frac{\sqrt{3}}{2}x  \bigg) \bigg] \rightarrow \mathcal{\overline{O}}_{x}=0.$$

The possible solutions of \eqref{eq:Equation_50} are similar to those of \ref{ex:Example_6} with the addition of a particular solution associated with $ \partial_{x}\sinh x \partial_{x}Q(u)=g(x,t)$, which satisfies $Q(u)= \sum_{j=0}^{n_3}C_j t^{j}e^{-\frac{x}{2}}cos\bigg(\frac{2\pi}{3}+\frac{\sqrt{3}}{2}x  \bigg)$ i.e.

\begin{equation}\label{eq:Equation_51}	\alpha_{k'}(0,-1,0,1,h_1(t)\ln\bigg(\tanh \bigg(\frac{x}{2}\bigg)\bigg)+h_2(t) )=\zeta_{3}^{k'}v_1(x,t)+\zeta_{3}^{3-k'}v_2(x,t),	
\end{equation}
with $ 1 \leq k' \leq 3$ and

$$v_1(x,t)=\sqrt[3]{\frac{h_1(t)\ln\big(\tanh \big(\frac{x}{2}\big)\big)+h_2(t)+\overline{v}(x,t)}{2}+\sqrt{\frac{(h_1(t)\ln\big(\tanh \big(\frac{x}{2}\big)\big)+h_2(t)+\overline{v}(x,t))^{2}}{4}-\frac{1}{27}}}$$

$$ v_2 (x,t)=\sqrt[3]{\frac{h_1(t)\ln\big(\tanh \big(\frac{x}{2}\big)\big)+h_2(t)+\overline{v}(x,t)}{2}-\sqrt{\frac{(h_1(t)\ln\big(\tanh \big(\frac{x}{2}\big)\big)+h_2(t)+\overline{v}(x,t))^{2}}{4}-\frac{1}{27}}}	,$$

with $h_1(t)$, $h_2(t)$ defined in \eqref{eq:Equation_48}, and $\overline{v}(x,t)$ is a particular solution of $ \partial_{x}\sinh x \partial_{x}v=g(x,t)$ with $g(x,t)=\sum_{j=0}^{n_3}C_j t^{j}e^{-\frac{x}{2}}\bigg[\frac{1}{\tanh x} \cos\bigg(\frac{4\pi}{3}+\frac{\sqrt{3}}{2}x  \bigg)+\cos \bigg(\frac{\sqrt{3}}{2}x  \bigg) \bigg] $ i.e.

$$ \overline{v}(x,t)=\sum_{j=0}^{n_3}C_j t^{j}e^{-\frac{x}{2}}\cos\bigg(\frac{2\pi}{3}+\frac{\sqrt{3}}{2}x  \bigg).$$ 

\end{example}

\begin{example}\label{ex:Example_8}
	In this example, we will use a PDE non-linear equation by making use of invariant spaces, i.e.
	
	\begin{equation}\label{eq:Equation_52}
		\partial_{t}^{\nu}u=\bigg(1-\frac{1}{\tanh^{2}x}\bigg)u^{2}\partial_{x} u+ \frac{1}{\tanh x}u^{2}\partial_{x}^{(2)}u+u^{2}\partial_{x}^{(3)}u+\bigg(1-\frac{1}{\tanh^{2}x}\bigg)u\partial_{x} u+ \frac{1}{\tanh x}u\partial_{x}^{(2)}u+u\partial_{x}^{(3)}u+ 
	\end{equation}
$$\frac{1}{\tanh x}\partial_{x} u+\partial_{x}^{(2)}u+ \lambda u,$$

with $\lambda \in \mathbb{C}$, and we recall that $\partial_{t}^{\nu}$ fractional derivative of Caputo with $\nu \in (0,1)$. In this case we can rewrite the second member $\mathcal{O}_{x}u=\mathcal{\overline{O}}_{x}u + \lambda u$. So we have to find the space of certain functions such that $\mathcal{\overline{O}}_{x}u=0$, so, as we have done in the previous examples, we find the elementary solutions by nested derivation, i.e., we verify if $\mathcal{\overline{O}}_{x}u$ is an algebraic combination of opportune nested derivations.
Let's identify the factors that members have in common in \eqref{eq:Equation_52}, and we obtain

$$\partial_{t}^{\nu}u= \bigg(1-\frac{1}{\tanh^{2}x}\bigg)(u^{2}+u)\partial_{x} u+\frac{1}{\tanh x}(u^{2}+u)\partial_{x}^{(2)}u+(u^{2}+u)\partial_{x}^{(3)}u+\frac{1}{\tanh x}\partial_{x} u+\partial_{x}^{(2)}u+ \lambda u$$

$$\partial_{t}^{\nu}u= (u^{2}+u)\bigg [\bigg(1-\frac{1}{\tanh^{2}x}\bigg)\partial_{x}u+\frac{1}{\tanh x}\partial_{x}^{(2)}u+  \partial_{x}^{(3)}u \bigg]+ \frac{1}{\sinh x}\partial_{x} \sinh x \partial_{x}u + \lambda u$$

$$\partial_{t}^{\nu}u= (u^{2}+u)\partial_{x} \frac{1}{\sinh x}\partial_{x} \sinh x \partial_{x}u +\frac{1}{\sinh x}\partial_{x} \sinh x \partial_{x}u + \lambda u.$$

\quad
Hence $\mathcal{\overline{O}}_{x}u=0$ if $\partial_{x} \sinh x \partial_{x}u =0$; this equation yields a vectorial space of possible solutions $< \ln\big(\tanh \big( \frac{x}{2}\big)\big),1>$, i.e., $h_1(t) \ln\big(\tanh \big( \frac{x}{2}\big)\big)+h_2(t)$, where $h_1(t)$, $h_2(t)$ are chosen such that $\partial_{t}^{\nu}h_1(t)=\lambda  h_1(t)$ and $\partial_{t}^{\nu}h_2(t)=\lambda  h_2(t)$.
Therefore the possible solutions of \eqref{eq:Equation_52} are

\begin{equation}\label{eq:Equation_53}
	u(x,t)=E_{\nu}(\lambda t)\bigg( A\ln\bigg(\tanh \bigg( \frac{x}{2}\bigg)\bigg) +B  \bigg),
\end{equation}

where $A,B \in \mathbb{C}$ and $E_{\nu}(\lambda t)=\sum_{k=0}^{\infty}\frac{(\lambda t)^{k}}{\Gamma(\nu k +1)}$ is one-parameter-Mittag-Leffler function.

\end{example}

\begin{example}\label{ex:Example_9}
	In this example the equation that we want to illustrate is \eqref{eq:Equation_52} by adding the following terms
	
	\begin{equation}\label{eq:Equation_54}
		\partial_{t}^{\nu}u=\bigg(1-\frac{1}{\tanh^{2}x}\bigg)u^{2}\partial_{x} u+ \frac{1}{\tanh x}u^{2}\partial_{x}^{(2)}u+u^{2}\partial_{x}^{(3)}u+\bigg(1-\frac{1}{\tanh^{2}x}\bigg)u\partial_{x} u+ \frac{1}{\tanh x}u\partial_{x}^{(2)}u+u\partial_{x}^{(3)}u+ 
		\end{equation}
		$$\frac{1}{\tanh x}\partial_{x} u+\partial_{x}^{(2)}u+ \lambda u 				
		-E_{\nu}(\lambda t)e^{-\frac{x}{2}}\bigg\{\cos \bigg(\frac{2\pi}{3}+\frac{\sqrt{3}}{2}x  \bigg)+ \frac{1}{\tanh x}\bigg[\cos \bigg(\frac{\sqrt{3}}{2}x  \bigg)+  \frac{\tanh^{2}x-1}{\tanh x}\cos\bigg(\frac{4\pi}{3}+\frac{\sqrt{3}}{2}x  \bigg)\bigg]\bigg \}u^{2}- 
$$

$$ E_{\nu}(\lambda t)e^{-\frac{x}{2}}\bigg\{\cos \bigg(\frac{2\pi}{3}+\frac{\sqrt{3}}{2}x  \bigg)+ \frac{1}{\tanh x}\bigg[\cos \bigg(\frac{\sqrt{3}}{2}x  \bigg)+  \frac{\tanh^{2}x-1}{\tanh x}\cos\bigg(\frac{4\pi}{3}+\frac{\sqrt{3}}{2}x  \bigg)\bigg]\bigg \}u-$$
$$ E_{\nu}(\lambda t) e^{-\frac{x}{2}}\bigg[\frac{1}{\tanh x} \cos\bigg(\frac{4\pi}{3}+\frac{\sqrt{3}}{2}x  \bigg)+\cos \bigg(\frac{\sqrt{3}}{2}x  \bigg) \bigg]. $$  

The second member of \eqref{eq:Equation_54}, as in the previous example \ref{ex:Example_8}, can rewrite $\mathcal{O}_{x}u=\mathcal{\overline{O}}_{x}u + \lambda u$, and we can find elementary solutions by the invariant space method resolving $\mathcal{\overline{O}}_{x}u=0$ and since it turns out that the equation \eqref{eq:Equation_54} becomes

$$ 	\partial_{t}^{\nu}u=(u^{2}+u)\bigg[\partial_{x} \frac{1}{\sinh x}\partial_{x} \sinh x \partial_{x}u -$$

$$  E_{\nu}(\lambda t)e^{-\frac{x}{2}}\bigg\{\cos \bigg(\frac{2\pi}{3}+\frac{\sqrt{3}}{2}x  \bigg)+ \frac{1}{\tanh x}\bigg[\cos \bigg(\frac{\sqrt{3}}{2}x  \bigg)+  \frac{\tanh^{2}x-1}{\tanh x}\cos\bigg(\frac{4\pi}{3}+\frac{\sqrt{3}}{2}x  \bigg)\bigg]\bigg \}                     \bigg] +$$

$$\bigg\{\frac{1}{\sinh x}\partial_{x} \sinh x \partial_{x}u-E_{\nu}(\lambda t) e^{-\frac{x}{2}}\bigg[\frac{1}{\tanh x} \cos\bigg(\frac{4\pi}{3}+\frac{\sqrt{3}}{2}x  \bigg)+\cos \bigg(\frac{\sqrt{3}}{2}x  \bigg) \bigg]\bigg\}+ \lambda u.$$

\quad

Therefore

$$\frac{1}{\sinh x}\partial_{x} \sinh x \partial_{x}u=E_{\nu}(\lambda t) e^{-\frac{x}{2}}\bigg[\frac{1}{\tanh x} \cos\bigg(\frac{4\pi}{3}+\frac{\sqrt{3}}{2}x  \bigg)+\cos \bigg(\frac{\sqrt{3}}{2}x  \bigg) \bigg] \rightarrow \mathcal{\overline{O}}_{x}u=0.$$

We thus obtain a particular solution of the $\frac{1}{\sinh x}\partial_{x} \sinh x \partial_{x}u=E_{\nu}(\lambda t) e^{-\frac{x}{2}}\bigg[\frac{1}{\tanh x} \cos\bigg(\frac{4\pi}{3}+\frac{\sqrt{3}}{2}x  \bigg)+\cos \bigg(\frac{\sqrt{3}}{2}x  \bigg) \bigg]$ and of the equation \eqref{eq:Equation_54} which is calculated in similar way at example \ref{ex:Example_7} respect with $\frac{1}{\sinh x}\partial_{x} \sinh x \partial_{x}v=P_n(t)e^{-\frac{x}{2}}\bigg[\frac{1}{\tanh x} \cos\bigg(\frac{4\pi}{3}+\frac{\sqrt{3}}{2}x  \bigg)+\cos \bigg(\frac{\sqrt{3}}{2}x  \bigg) \bigg]$ i.e.

$$\overline{u}(x,t)=E_{\nu}(\lambda t)e^{-\frac{x}{2}}\cos\bigg(\frac{2\pi}{3}+\frac{\sqrt{3}}{2}x  \bigg),$$

combining this with the solutions in the previous example, we obtain the following families of possibily elementary solutions of equation \eqref{eq:Equation_54}

\begin{equation}\label{eq:Equation_55}
u(x,t)=E_{\nu}(\lambda t)\bigg( A\ln\bigg(\tanh \bigg( \frac{x}{2}\bigg)\bigg) +B + CE_{\nu}(\lambda t)e^{-\frac{x}{2}}\cos\bigg(\frac{2\pi}{3}+\frac{\sqrt{3}}{2}x  \bigg) \bigg)	
\end{equation}
where $A,B,C \in \mathbb{C}$.   
\end{example}

\begin{example}\label{ex:Example_10}
	\quad
	
In this example we will discuss the equation similar to examples \ref{pr:Proposition_8} and \ref{ex:Example_9} replacing the nested derivative non-linear with a linear one, and this we will see subsequently.

However, given the following equation

\begin{equation}\label{eq:Equation_56}
	\partial_{t}^{\nu}u=\bigg(1-\frac{1}{\tanh^{2}x}\bigg)nu^{n+1}\partial_{x}u+ \frac{1}{\tanh x}nu^{n+1}\partial_{x}^{(2)}u+\frac{1}{\tanh x}n(n-1)u^{n}(\partial_{x}u)^{2}+nu^{n+1}\partial_{x}^{(3)}u+3n(n-1)u^{n}\partial_{x}u\partial_{x}^{(2)}u+	
\end{equation} 	

$$n(n-1)(n-2)u^{n-1}(\partial_{x}u)^{3}+\bigg(1-\frac{1}{\tanh^{2}x}\bigg)nu^{n}\partial_{x}u+\frac{1}{\tanh x}nu^{n}\partial_{x}^{2}u+n(n-1)\frac{1}{\tanh x}u^{n-1}(\partial_{x}u)^{2}+nu^{n}\partial_{x}^{(3)}u+$$
$$3n(n-1)u^{n-1}\partial_{x}u\partial_{x}^{(2)}u+n(n-1)(n-2)u^{n-2}(\partial_{x}u)^{3}+\frac{1}{\tanh x}nu^{n-1}\partial_{x}u+ nu^{n-1}\partial_{x}^{(2)}u+n(n-1)u^{n-2}(\partial_{x}u)^{2}+ \lambda u,$$

with with $\lambda \in \mathbb{C}$ and we recall that $\partial_{t}^{\nu}$ fractional derivative of Caputo with $\nu \in (0,1)$.

Applying Faa' di Bruno's formula for $u^{n}$ and the coefficients of the nested derivative formula \eqref{eq:Equation_9}, and \eqref{eqs:Equations_2} to the second member of \eqref{eq:Equation_56} $\mathcal{O}_{x}$, we can see that the nested derivative, which is an algebraic combination of the second member of $\mathcal{O}_{x}$, is grade $3$ and the functions that defined the nested derivative are $f_1(x)=\sinh x$, $f_2(x)=\frac{1}{\sinh x}$. Furthermore, $\frac{1}{f_1(x)}$ is an antiderivative elementary function.

To summarize, we have that

$$\partial_{t}^{\nu}u=(u^{2}+u)\partial_{x} \frac{1}{\sinh x}\partial_{x} \sinh x \partial_{x}u^{n}+\frac{1}{\sinh x}\partial_{x} \sinh x \partial_{x}u^{n}+\lambda u.$$

Hence

$$\mathcal{O}_{x}=\lambda u \hspace{0.1cm}if\hspace{0.1cm}(u^{2}+u)\partial_{x} \frac{1}{\sinh x}\partial_{x} \sinh x \partial_{x}u^{n}+\frac{1}{\sinh x}\partial_{x} \sinh x \partial_{x}u^{n}=0$$

and 

$$(u^{2}+u)\partial_{x} \frac{1}{\sinh x}\partial_{x} \sinh x \partial_{x}u^{n}+\frac{1}{\sinh x}\partial_{x} \sinh x \partial_{x}u^{n}=0\hspace{0.1cm}if\hspace{0.1cm}\frac{1}{\sinh x}\partial_{x} \sinh x \partial_{x}u^{n}=0.$$

We have that

$$\frac{1}{\sinh x}\partial_{x} \sinh x \partial_{x}u^{n}=0\hspace{0.1cm}if\hspace{0.1cm} u(x,t)=f(t)\zeta_{n}^{j}\sqrt[n]{A\ln\bigg(\tanh \bigg( \frac{x}{2}\bigg)\bigg)+B}=f(t)\sqrt[n]{A_1 \ln\bigg(\tanh \bigg( \frac{x}{2}\bigg)\bigg)+B_2},$$

where $\zeta_{n}^{j}$ is a primitive root of order $n$ with $1\leq j \leq n$ and $A_1=\zeta_{n}^{jn}A$, $B_2=\zeta_{n}^{jn}B$ with $A,B \in \mathbb{C}$.

\quad

We have thus elementary solutions; in fact, $u^{n}-d=0$ is resolved by radicals, and it can apply the proposition \ref{pr:Proposition_9}.

To determine $f(t)$, we must calculate 

$$\partial_{t}^{\nu}f(t)=\lambda f(t).$$

As we have seen in the previous examples, $f(t)=E_{\nu}(\lambda t)$, and therefore the possible solutions of \eqref{eq:Equation_56} are

\begin{equation}\label{eq:Equation_57}
u(x,t)=E_{\nu}(\lambda t)\sqrt[n]{A_1 \ln\bigg(\tanh \bigg( \frac{x}{2}\bigg)\bigg)+B_2}.	
\end{equation}

\end{example}

\begin{example}\label{ex:Example_11}
As in the examples \ref{ex:Example_8} and \ref{ex:Example_9}, we consider a modified version of equation \eqref{eq:Equation_56} by adding the term used in the example \ref{ex:Example_9}, and we obtain

\begin{equation}\label{eq:Equation_58}
		\partial_{t}^{\nu}u=\bigg(1-\frac{1}{\tanh^{2}x}\bigg)nu^{n+1}\partial_{x}u+ \frac{1}{\tanh x}nu^{n+1}\partial_{x}^{(2)}u+\frac{1}{\tanh x}n(n-1)u^{n}(\partial_{x}u)^{2}+nu^{n+1}\partial_{x}^{(3)}u+3n(n-1)u^{n}\partial_{x}u\partial_{x}^{(2)}u+	
		\end{equation} 	
		
		$$n(n-1)(n-2)u^{n-1}(\partial_{x}u)^{3}+\bigg(1-\frac{1}{\tanh^{2}x}\bigg)nu^{n}\partial_{x}u+\frac{1}{\tanh x}nu^{n}\partial_{x}^{2}u+n(n-1)\frac{1}{\tanh x}u^{n-1}(\partial_{x}u)^{2}+nu^{n}\partial_{x}^{(3)}u+$$
		$$3n(n-1)u^{n-1}\partial_{x}u\partial_{x}^{(2)}u+n(n-1)(n-2)u^{n-2}(\partial_{x}u)^{3}+\frac{1}{\tanh x}nu^{n-1}\partial_{x}u+ nu^{n-1}\partial_{x}^{(2)}u+n(n-1)u^{n-2}(\partial_{x}u)^{2}+ \lambda u-$$

$$E_{\nu}(\lambda t)e^{-\frac{x}{2}}\bigg\{\cos \bigg(\frac{2\pi}{3}+\frac{\sqrt{3}}{2}x  \bigg)+ \frac{1}{\tanh x}\bigg[\cos \bigg(\frac{\sqrt{3}}{2}x  \bigg)+  \frac{\tanh^{2}x-1}{\tanh x}\cos\bigg(\frac{4\pi}{3}+\frac{\sqrt{3}}{2}x  \bigg)\bigg]\bigg \}u^{2}- 
$$

$$ E_{\nu}(\lambda t)e^{-\frac{x}{2}}\bigg\{\cos \bigg(\frac{2\pi}{3}+\frac{\sqrt{3}}{2}x  \bigg)+ \frac{1}{\tanh x}\bigg[\cos \bigg(\frac{\sqrt{3}}{2}x  \bigg)+  \frac{\tanh^{2}x-1}{\tanh x}\cos\bigg(\frac{4\pi}{3}+\frac{\sqrt{3}}{2}x  \bigg)\bigg]\bigg \}u-$$
$$ E_{\nu}(\lambda t) e^{-\frac{x}{2}}\bigg[\frac{1}{\tanh x} \cos\bigg(\frac{4\pi}{3}+\frac{\sqrt{3}}{2}x  \bigg)+\cos \bigg(\frac{\sqrt{3}}{2}x  \bigg) \bigg]. $$  

Using the procedure of the example \ref{ex:Example_9}, the possible solutions of \eqref{eq:Equation_58} are the eigenfunctions of the second member of \eqref{eq:Equation_58} that belong to the kernel of the operator

$$\frac{1}{\sinh x}\partial_{x} \sinh x \partial_{x}u^{n}-E_{\nu}(\lambda t) e^{-\frac{x}{2}}\bigg[\frac{1}{\tanh x} \cos\bigg(\frac{4\pi}{3}+\frac{\sqrt{3}}{2}x  \bigg)+\cos \bigg(\frac{\sqrt{3}}{2}x  \bigg) \bigg].$$

Without taking into account any initial conditions in space-time, the solutions are
\begin{equation}\label{eq:Equation_59}
u(x,t)=E_{\nu}(\lambda t)\sqrt[n]{A \ln\bigg(\tanh \bigg( \frac{x}{2}\bigg)\bigg)+Be^{-\frac{x}{2}}\cos\bigg(\frac{2\pi}{3}+\frac{\sqrt{3}}{2}x  \bigg)+C},
\end{equation}
with $A,B,C \in \mathbb{C}$.
	
\end{example}

\section{Appendix: A generalization of the $\sin(x)$ and $\cos(x)$ functions}\label{se:Section_7}

Recalling that the real function $\cos(x)$ can be rewritten in terms of complex numbers and opportune complex functions in this way

$$\cos(x)=\frac{e^{i x}+e^{-i x}}{2}.$$

We can generalize by replacing $i$ with a primitive root of unity with opportune raising power $\zeta_{n}^{k}$ with its conjugate $\overline{\zeta_{n}^{k}}=\zeta_{n}^{n-k}$ and we obtain

$$f(x,\zeta_{n}^{k}):=\frac{e^{\zeta_{n}^{k} x}+e^{\overline{\zeta_{n}^{k}} x}}{2}$$

Let's envelope $f(x,\zeta_{n}^{k})$ in terms of exponential 

$$\frac{e^{\zeta_{n}^{k} x}+e^{\overline{\zeta_{n}^{k}} x}}{2}=\frac{e^{e^{\frac{2k\pi i}{n}} x}+e^{e^{-\frac{2k\pi i}{n} x}}}{2}=\frac{e^{\big[\cos\big(\frac{2k\pi }{n}\big)+i\sin\big(\frac{2k\pi }{n}\big)\big] x}+e^{\big[\cos\big(\frac{2k\pi }{n}\big)-i\sin\big(\frac{2k\pi }{n}\big)\big] x}}{2}= $$

$$=e^{x\cos\big(\frac{2k\pi }{n}\big)}\frac{e^{x i\sin\big(\frac{2k\pi }{n}\big) }+e^{-ix\sin\big(\frac{2k\pi }{n}\big) }}{2}=e^{x\cos\big(\frac{2k\pi }{n}\big)}\cos\bigg(x\sin\bigg(\frac{2k\pi }{n}\bigg)\bigg).$$

So

\begin{equation}\label{eq:Equation_60}
f(x,\zeta_{n}^{k})=e^{x\cos\big(\frac{2k\pi }{n}\big)}\cos\bigg(x\sin\bigg(\frac{2k\pi }{n}\bigg)\bigg).	
\end{equation}
Furthermore, $f(x)$ is a real function, and $f(x, \zeta_{4})=\cos(x)$ is the particular case for $k=1$ and $n=4$.

We want to prove that 

\begin{equation}\label{eq:Equation_61}
	\bigg(e^{x\cos\big(\frac{2k\pi }{n}\big)}\cos\bigg(x\sin\bigg(\frac{2k\pi }{n}\bigg)\bigg) \bigg)^{(j)}=e^{x\cos\big(\frac{2k\pi }{n}\big)}\cos\bigg(\frac{2jk\pi }{n}+ x\sin\bigg(\frac{2k\pi }{n}\bigg)\bigg).
\end{equation}

Let's prove this by induction on the grade of derivation.

We begin with $j=1$.

$$\bigg(e^{x\cos\big(\frac{2k\pi }{n}\big)}\cos\bigg(x\sin\bigg(\frac{2k\pi }{n}\bigg)\bigg) \bigg)^{'}=\bigg(\frac{e^{e^{\frac{2k\pi i}{n}} x}+e^{e^{-\frac{2k\pi i}{n} x}}}{2}\bigg)^{'}=\frac{e^{\frac{2k\pi i}{n}}e^{e^{\frac{2k\pi i}{n}} x}+e^{-\frac{2k\pi i}{n}}e^{e^{-\frac{2k\pi i}{n} x}}}{2}=$$

$$=\frac{e^{\frac{2k\pi i}{n}}e^{\big[\cos\big(\frac{2k\pi }{n}\big)+i\sin\big(\frac{2k\pi }{n}\big)\big] x}+e^{-\frac{2k\pi i}{n}}e^{\big[\cos\big(\frac{2k\pi }{n}\big)-i\sin\big(\frac{2k\pi }{n}\big)\big] x}}{2}=e^{x\cos\big(\frac{2k\pi }{n}\big)}\frac{e^{ i\big(\frac{2k\pi }{n}+x\sin\big(\frac{2k\pi }{n}\big) \big)}+e^{-i\big(\frac{2k\pi }{n}+x\sin\big(\frac{2k\pi }{n}\big)\big) }}{2}=$$

$$e^{x\cos\big(\frac{2k\pi }{n}\big)}\cos\bigg(\frac{2k\pi }{n}+ x\sin\bigg(\frac{2k\pi }{n}\bigg)\bigg).$$
So the base induction for $j=1$ is proved. Now let's move on to the inductive hypothesis for $j\geq 2$ i.e.

$$ \bigg(e^{x\cos\big(\frac{2k\pi }{n}\big)}\cos\bigg(x\sin\bigg(\frac{2k\pi }{n}\bigg)\bigg) \bigg)^{(j-1)}=e^{x\cos\big(\frac{2k\pi }{n}\big)}\cos\bigg(\frac{2(j-1)k\pi }{n}+ x\sin\bigg(\frac{2k\pi }{n}\bigg)\bigg).   $$

$$\bigg(e^{x\cos\big(\frac{2k\pi }{n}\big)}\cos\bigg(x\sin\bigg(\frac{2k\pi }{n}\bigg)\bigg) \bigg)^{(j)}=\bigg(\bigg(e^{x\cos\big(\frac{2k\pi }{n}\big)}\cos\bigg(x\sin\bigg(\frac{2k\pi }{n}\bigg)\bigg) \bigg)^{(j-1)} \bigg)^{'}=$$
$$=\bigg(e^{x\cos\big(\frac{2k\pi }{n}\big)}\frac{e^{ i\big(\frac{2(j-1)k\pi }{n}+x\sin\big(\frac{2k\pi }{n}\big) \big)}+e^{-i\big(\frac{2(j-1)k\pi }{n}+x\sin\big(\frac{2k\pi }{n}\big)\big) }}{2}\bigg)^{'}=$$
$$=\bigg(\frac{e^{\frac{2(j-1)k\pi i}{n}}e^{\big[\cos\big(\frac{2k\pi }{n}\big)+i\sin\big(\frac{2k\pi }{n}\big)\big] x}+e^{-\frac{2(j-1)k\pi i}{n}}e^{\big[\cos\big(\frac{2k\pi }{n}\big)-i\sin\big(\frac{2k\pi }{n}\big)\big] x}}{2}\bigg)^{'}=$$

$$=\frac{e^{\frac{2(j-1)k\pi i}{n}}e^{\frac{2k\pi i}{n}}e^{\big[\cos\big(\frac{2k\pi }{n}\big)+i\sin\big(\frac{2k\pi }{n}\big)\big] x}+e^{-\frac{2(j-1)k\pi i}{n}}e^{-\frac{2k\pi i}{n}}e^{\big[\cos\big(\frac{2k\pi }{n}\big)-i\sin\big(\frac{2k\pi }{n}\big)\big] x}}{2}=$$

$$=\frac{e^{\frac{2jk\pi i}{n}}e^{\big[\cos\big(\frac{2k\pi }{n}\big)+i\sin\big(\frac{2k\pi }{n}\big)\big] x}+e^{-\frac{2jk\pi i}{n}}e^{\big[\cos\big(\frac{2k\pi }{n}\big)-i\sin\big(\frac{2k\pi }{n}\big)\big] x}}{2} =$$
$$=e^{x\cos\big(\frac{2k\pi }{n}\big)}\frac{e^{ i\big(\frac{2jk\pi }{n}+x\sin\big(\frac{2k\pi }{n}\big) \big)}+e^{-i\big(\frac{2jk\pi }{n}+x\sin\big(\frac{2k\pi }{n}\big)\big) }}{2}=e^{x\cos\big(\frac{2k\pi }{n}\big)}\cos\bigg(\frac{2jk\pi }{n}+ x\sin\bigg(\frac{2k\pi }{n}\bigg)\bigg).                       $$

In fact, $\cos(x)=e^{x\cos\big(\frac{2\pi }{4}\big)}\cos\bigg( x\sin\bigg(\frac{2\pi }{4}\bigg)\bigg)=e^{x\cos\big(\frac{\pi }{2}\big)}\cos\bigg( x\sin\bigg(\frac{\pi }{2}\bigg)\bigg)=e^{x \cdot 0}\cos\bigg( x\cdot 1\bigg)$ so $(\cos(x))^{'}=\cos\big(\frac{2 \pi}{4}+x\big)=\cos\big(\frac{\pi}{2}+x\big)=-\sin(x)$.

To conclude this section, it can also be deduced from \eqref{eq:Equation_61} that for every $k \in \mathbb{Z}$ we have

$$\bigg(e^{x\cos\big(\frac{2k\pi }{n}\big)}\cos\bigg(x\sin\bigg(\frac{2k\pi }{n}\bigg)\bigg) \bigg)^{(n)}=e^{x\cos\big(\frac{2k\pi }{n}\big)}\cos\bigg(x\sin\bigg(\frac{2k\pi }{n}\bigg)\bigg).$$

\end{document}